%
%

\documentclass[11pt]{amsart}
\usepackage{amsmath,amscd,amssymb,amsfonts,amsthm}

\textwidth15.2 cm
\oddsidemargin.4cm
\evensidemargin.4cm

\setcounter{tocdepth}{1}

\newtheorem{theorem}{Theorem}[section]

\newtheorem{proposition}[theorem]{Proposition}

\newtheorem{lemma}[theorem]{Lemma}
\theoremstyle{remark}
\newtheorem{remark}[theorem]{Remark}
\newtheorem{definition}[theorem]{Definition}
\newtheorem{remarks}[theorem]{Remarks}
\newtheorem{example}[theorem]{Example}

%
%
\newcommand\A{\mathcal{A}}

\renewcommand\L{\mathcal{L}}
\newcommand\G{\mathcal{G}}

\newcommand{\TM}{\mathbb{T}M}

\newcommand{\T}{\mathbb{T}}

\newcommand{\n}{\mathfrak{n}}

\newcommand{\R}{\mathbb{R}}
\newcommand{\C}{\mathbb{C}}
\newcommand{\id}{\on{id}}

\newcommand{\pr}{\on{pr}}

\newcommand\lie[1]{\mathfrak{#1}}
\renewcommand{\k}{\lie{k}}
\newcommand{\h}{\lie{h}}

\newcommand{\g}{\lie{g}}

\newcommand{\Exp}{\on{Exp}}

\renewcommand{\t}{\lie{t}}

\newcommand{\on}{\operatorname}

\newcommand{\Ad}{ \on{Ad} }
\newcommand{\ad}{ \on{ad} }

\newcommand{\Hom}{ \on{Hom}} 
\renewcommand{\ker}{ \on{ker}} 
 
\newcommand{\SU}{ \on{SU}}

\newcommand{\Mult}{{\on{Mult}}}

\newcommand{\Add}{ \on{Add}}


\renewcommand\a{\mathsf{a}}

\newcommand{\hra}{\hookrightarrow}

\newcommand{\ra}{\rightarrow}
\newcommand{\rra}{\rightrightarrows}

\renewcommand{\d}{{\mbox{d}}}
\newcommand{\dd}{\mf{d}}

\newcommand{\ol}{\overline}

\newcommand\sig{\sigma}

\newcommand\Om{\Omega}
\newcommand\om{\omega}

\newcommand{\f}{\frac}

\newcommand{\p}{\partial}
\renewcommand{\l}{\langle}
\renewcommand{\r}{\rangle}
\newcommand{\hh}{{ \f{1}{2}}}
\newcommand{\ti}{\tilde}



\newcommand\beqn{\begin{equation}}
\newcommand\eeqn{\end{equation}}
\newcommand{\ca}{\mathcal}

\newcommand{\wt}{\widetilde}
\newcommand{\iz}{\mathsf{i}}
\newcommand{\sz}{\mathsf{s}}
\newcommand{\tz}{\mathsf{t}}
\newcommand{\mf}{\mathfrak}
\newcommand{\beq}{\begin{eqnarray*}}
\newcommand{\eeq}{\end{eqnarray*}}

\newcommand{\Cour}[1]      {[\![#1]\!]}


\begin{document}

\title[]{Linearization of Poisson Lie group structures}

\author{A. Alekseev} \address{University of Geneva, Section of
  Mathematics, 2-4 rue du Li\`evre, 1211 Gen\`eve 24, Switzerland}
\email{alekseev@math.unige.ch}

\author{E. Meinrenken}
\address{University of Toronto, Department of Mathematics,
40 St George Street, Toronto, Ontario M4S2E4, Canada }
\email{mein@math.toronto.edu}

\date{\today}

\begin{abstract}
We show that for any coboundary Poisson Lie group $G$, the Poisson structure on $G^*$ is 
linearizable at the group unit.  This strengthens a result of Enriquez-Etingof-Marshall \cite{enr:co} who had 
established formal linearizability of $G^*$ for quasi-triangular Poisson Lie groups $G$.  We also prove 
linearizability properties for the group multiplication in $G^*$ and 
for Poisson Lie group morphisms, with similar assumptions.  
\end{abstract}

\maketitle
\setcounter{tocdepth}{2}

{\small \tableofcontents \pagestyle{headings}}

\setcounter{section}{-1}
\vskip.3in
\section{Introduction}\label{sec:intro}
Let $(M,\pi)$ be a Poisson manifold. If the Poisson bivector field $\pi$ vanishes at some given point $x_0\in M$, then the 
tangent fiber $T_{x_0}M$ at that point acquires a linear Poisson structure $\pi_0$. The Poisson structure on $M$ is called 
\emph{linearizable at $x_0$} if there exists a \emph{Poisson linearization}, i.e., a germ of a Poisson diffeomorphism 
$f\colon T_{x_0}M\to M$ whose differential at zero is the identity map of $T_{x_0}M$.

Any linear Poisson structure on a vector space arises as the
\emph{Lie--Poisson structure} for a Lie bracket on the dual space. For a Poisson structure vanishing at $x_0$, one calls $T_{x_0}^*M$,  
with the Lie bracket defined by $\pi_0$, 
the \emph{isotropy Lie algebra} at $x_0$.  Weinstein \cite{wei:loc}
proved that if the isotropy Lie algebra is semisimple, then 
$\pi$ is \emph{formally} linearizable at $x_0$, but need not be smoothly linearizable. Using hard estimates of Nash-Moser type, Conn \cite{con:noran} proved analytic linearizability in the semisimple case and smooth linearizability in the compact semisimple case \cite{con:norsm}.  A soft proof of Conn's linearizability theorem was
obtained more recently by Crainic and Fernandes \cite{cra:geo}. See
\cite{duf:lin,fer:lin,wad:nor,wei:pri} for further results on the
linearizability problem.

If $G$ is a Poisson Lie group, then the group unit $e$ is a 
zero of the Poisson structure. Hence, its Lie algebra $\g$ has a linear Poisson structure, corresponding to a Lie bracket on $\g^*$. 
Cahen-Gutt-Rawnsley \cite{cah:non} showed that if $G$ is a compact simply connected Lie group equipped with the Lu-Weinstein Poisson structure, then $G$ is \emph{not} linearizable at $e$ unless it is a product of $\SU(2)$'s. On the other hand, by the Ginzburg-Weinstein theorem \cite{gi:lp} the dual Poisson Lie group $G^*$ is 
linearizable -- in fact it is \emph{globally} Poisson diffeomorphic to $\g^*$. Further examples of linearizable and non-linearizable Poisson Lie group structures were obtained by Chloup-Arnould \cite{chl:lin}.

In 2005, Enriques-Etingof-Marshall  \cite{enr:co} proved that if $G$ is a factorizable quasi-triangular Poisson Lie group, then $G^*$ is formally linearizable. In this paper, we will 
strengthen this result to \emph{smooth} linearizability, for the larger class of coboundary Poisson Lie groups. We also prove 
linearizability properties of the group multiplication in $G^*$ and 
for Poisson Lie group morphisms, with similar assumptions.  

To explain our results in more detail, let $G$ be a Poisson Lie group. By Drinfeld's theory, the Lie bialgebra structure on $\g$ is equivalent to a Manin triple 
$(\dd,\g,\h)$. Here $\dd$ is a quadratic Lie algebra, with an invariant non-degenerate symmetric bilinear form 
(`metric') $\l\cdot,\cdot\r$, and  $\g$ and $\h$ are transverse Lagrangian Lie subalgebras. 
The bilinear form on $\dd$ identifies $\dd/\g$ with the dual space $\g^*$. 

Suppose for a moment that the Lie algebra Manin triple $(\dd,\g,\h)$ integrates to a triple of Lie groups $(D,G,H)$ such that 
$G,H\subset D$ are closed Lie subgroups and the product map $H\times G\to D$ 
is a global diffeomorphism. Then there is a well-defined quotient map $D\to D/G=H=G^*$. The action of $G$ by left 
multiplication on $D$ descends to the dressing action on $G^*$. 
As is well-known, the symplectic leaves of $G^*$ are the orbits of the dressing action. Hence, any linearization map should take the dressing orbits in $G^*$ to the coadjoint orbits in $\g^*$. 

In the general case, we may take $D,G,H$ to be simply connected Lie groups integrating $\dd,\g,\h$. The projection 
$D\to H=G^*$ need not be globally well-defined (e.g., the subgroup of $D$ integrating $\h \subset \dd$
need not be simply connected), but it is defined on a neighborhood of the group unit (hence as a germ of a map), since 
the product map $H\times G\to D,\ (u,g)\mapsto ug$ is  
a diffeomorphism near the group units. 

As noted by Drinfeld (see Section \ref{sec:cob}), a coboundary structure on the Lie bialgebra $\g$ is equivalent to a $\g$-equivariant splitting of the sequence 
\[ 0\to \g\to \dd\to \g^*\to 0.\] 
We stress that the image of the splitting $\g^*\to \dd$ is usually different from $\h$ (which need not be $\g$-invariant). Define 
\[ \Exp\colon \g^*\to G^*\]
by the composition $\g^*\to \dd\to D\to G^*$, where $\exp: \dd \to D$ is the exponential map. Strictly speaking, since the projection $D\to G^*$ is only well-defined on some neighborhood of the group unit, the map $\Exp$ is only defined near $0\in\g^*$, or as a germ of a smooth map. We think of 
$\Exp$ as a `modified exponential map'. In contrast to the usual exponential map $\exp: \g^* \to G^*$, it intertwines the coadjoint action 
with the dressing action, hence it takes symplectic leaves to 
symplectic leaves. 

A smooth map $\phi\colon \g^*\to G$ will be called a \emph{bisection} if $\A(\phi)\colon \g^*\to \g^*,\ \mu\mapsto \phi(\mu).\mu$ is a diffeomorphism of $\g^*$.  The name 
comes from an interpretation of $\phi$ as a bisection of the symplectic groupoid $T^*G\rra \g^*$.  Our first result is the following theorem:
\begin{theorem}
Let $\g$ be a coboundary Lie bialgebra, and $G^*$ the Poisson Lie
group with Lie bialgebra $\g^*$.  Then $G^*$ is smoothly linearizable
at $e$. In fact there exists a germ of a bisection $\phi\colon \g^*\to G$, with the property that 
\[ \Exp\circ \A(\phi)^{-1}\]
is a Poisson linearization of $G^*$ at $e$.
\end{theorem}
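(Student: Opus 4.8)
The plan is to compare the linear Poisson structure $\pi_0$ on $\g^*$ with the pullback of the Poisson structure on $G^*$ under the modified exponential map, and then to cancel the discrepancy by a Moser-type argument carried out along coadjoint orbits, with the resulting correction realized by a bisection. First I would record the differential of $\Exp$ at the origin. Writing $\Exp=\pr\circ\exp\circ\, j$, where $j\colon\g^*\to\dd$ is the coboundary splitting and $\pr\colon D\to G^*$ the projection, the chain rule at $0$ gives $d\Exp_0=\pr_e\circ j$, which is the identity of $\g^*$ since $j$ splits $0\to\g\to\dd\to\g^*\to0$. Hence $\Exp$ is a germ of a diffeomorphism with $d\Exp_0=\id$, and I may form the germ of a Poisson bivector $\pi_1:=\Exp^*\pi_{G^*}$, the unique one making $\Exp$ a Poisson map near $0$. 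Because $\Exp$ intertwines the coadjoint action with the dressing action, $\pi_1$ is coadjoint-equivariant and its symplectic leaves are precisely the coadjoint orbits — the same foliation as for $\pi_0$ — and $\pi_0,\pi_1$ share the same $1$-jet at $0$, namely $\pi_0$. With these reductions in hand the theorem amounts to a gauge statement: there is a germ of a bisection $\phi$, with $\phi(0)=e$, such that $\A(\phi)$ pushes $\pi_1$ forward to $\pi_0$. Indeed then $f_*\pi_0=\Exp_*\bigl(\A(\phi)^{-1}_*\pi_0\bigr)=\Exp_*\pi_1=\pi_{G^*}$, so $f=\Exp\circ\A(\phi)^{-1}$ is Poisson; and a direct computation of $d\A(\phi)_0(\nu)=\frac{d}{dt}\big|_0\Ad^*_{\phi(t\nu)}(t\nu)$, using $\Ad^*_g 0=0$, shows that $\phi(0)=e$ forces $d\A(\phi)_0=\id$, whence $df_0=\id$.

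To produce $\phi$ I would run Moser's method leafwise. On each coadjoint orbit $\O$ let $\omega_0,\omega_1$ be the symplectic forms dual to $\pi_0,\pi_1$, and interpolate $\omega_s=\omega_0+s(\omega_1-\omega_0)$; since $\omega_1-\omega_0$ vanishes to second order at $0$, the form $\omega_s$ is nondegenerate on every leaf for all $s\in[0,1]$ after shrinking the neighbourhood. I then seek a time-dependent, leaf-tangent vector field $v_s$ whose flow $\Psi_s$ satisfies $\Psi_s^*\omega_s=\omega_0$; differentiating gives the leafwise equation
\[ \omega_1-\omega_0+d\bigl(\iota_{v_s}\omega_s\bigr)=0 .\]
This requires a leafwise primitive $\beta_s$ of $\omega_1-\omega_0$, after which $v_s$ is fixed by $\iota_{v_s}\omega_s=-\beta_s$. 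The bisection $\phi=\phi_1$ is then read off from $\Psi_1$, \emph{provided} two things hold: that the primitive exists, and that $v_s$ is of bisection type, i.e. $v_s(\mu)=\ad^*_{X_s(\mu)}\mu$ for a \emph{smooth} $\g$-valued function $X_s$ with $X_s(0)=0$ (so that the integrating path satisfies $\phi_s(0)=e$). It is cleanest to reparametrize the unknown by $X_s$ from the outset, rewriting the displayed equation as a linear equation for $X_s$ and asking for a smooth solution.

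The main obstacle is exactly these two requirements, and this is where the coboundary hypothesis is indispensable. For existence of the primitive one must know that $\omega_1-\omega_0$ is leafwise exact, i.e. that $[\omega_1]=[\omega_0]$ in leafwise cohomology; I expect to obtain this from the equivariant splitting $j$ together with the second-order vanishing at $0$, producing an equivariant primitive by averaging the coadjoint-equivariant data supplied by the coboundary structure. The genuinely delicate point is smoothness of the generator $X_s$ uniformly across orbits whose dimension jumps: the naive choice of $X_s(\mu)$ via a metric complement to the stabilizer $\g_\mu$ is \emph{not} smooth, since $\g_\mu$ is not of locally constant dimension. Equivariance is the remedy: an equivariant primitive $\beta_s$ obtained from $j$ feeds, through the moment-map identity $\iota_{\ad^*_X\mu}\omega_0=-d\langle X,\mu\rangle$ valid for the linear structure (for which the Hamiltonian field of a function $H$ is precisely $\mu\mapsto\ad^*_{dH_\mu}\mu$, with the manifestly smooth generator $dH$), into a canonical smooth $X_s$ solving the equation, and normalizing the underlying Hamiltonian to have a critical point at $0$ secures $X_s(0)=0$. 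Granting these steps, $\Psi_1=\A(\phi)$ with $\phi(0)=e$ satisfies $\A(\phi)_*\pi_1=\pi_0$, and $\Exp\circ\A(\phi)^{-1}$ is the desired Poisson linearization of $G^*$ at $e$.
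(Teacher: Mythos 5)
Your second half is essentially the paper's own Moser step, but your first half leaves the genuinely hard point unproven. The crux of the theorem is not the Moser/bisection cancellation (which, given the right data, works exactly as you describe, and is carried out in Section \ref{subsec:lin} of the paper): it is the prior claim that $\Exp^*\pi_{G^*}$ differs from $\pi_{\g^*}$ by a gauge transformation, i.e.\ that there is a \emph{globally smooth closed} $2$-form $\sigma$ on a neighborhood of $0\in\g^*$ whose leafwise restrictions realize your $\omega_1-\omega_0$, together with the moment-map identity $\iota(\xi_{\g^*})\sigma=\l\d\mu,\xi\r-\Exp^*\l\theta^R_{G^*},\xi\r$ of \eqref{eq:contr}. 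You acknowledge this (``I expect to obtain this \dots by averaging the coadjoint-equivariant data''), but averaging is not available here: the theorem is for arbitrary coboundary Lie bialgebras, so $G$ is in general noncompact and there is no invariant mean. Moreover, leafwise exactness orbit by orbit is not enough; you need smooth dependence of the primitive transverse to leaves of jumping dimension, and equivariance of $j$ alone does not manufacture it. This is precisely the content the paper supplies by a different mechanism: it builds $(\Exp,\sigma)$ as an explicit composition of Dirac morphisms,
\[ (\Exp,\sigma)=(\pr_{G^*},0)\circ (F,\varepsilon)^{-1}\circ (\exp,\varpi)\circ (j,0),\]
where $\varpi$ is the de Rham homotopy-operator primitive of $\exp^*\eta$ for the Cartan $3$-form $\eta$ on the double $D$, $F(h,g)=hg^{-1}$ with $\varepsilon=\hh\l\theta^L_H,\theta^L_G\r$ a Dirac morphism $D^*\to D$, and $(j,0)$ is a morphism exactly because the coboundary splitting is $\g$-equivariant. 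The closedness, smoothness, and equivariance properties of $\sigma$ come for free from this construction rather than from any cohomological or averaging argument.

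Two smaller points. First, your worry about smoothness of the generator $X_s$ across orbits is resolved more cleanly than by a ``canonical Hamiltonian'': once $\sigma$ (hence a smooth global Moser $1$-form $a_t$, e.g.\ via the homotopy operator applied to $-\f{\p}{\p t}\sig_t$) is in hand, the paper lifts the Moser data to the symplectic groupoid $T^*G\rra\g^*$ (Proposition \ref{prop:groupoid}): the lifted vector field $\ti v_t$ defined by $\iota(\ti v_t)\om_t=-\tz^*a_t$ is automatically smooth since $\om_t$ is symplectic, and the bisection is read off as $\phi_t=\wt F_t^{-1}\circ\iz$; no pointwise choice of $X_s(\mu)\in\g$ is ever needed, and $\phi_t(0)=e$ follows because $a_t$ vanishes at the origin. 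Second, your nondegeneracy argument for $\omega_s$ on leaves (``$\omega_1-\omega_0$ vanishes to second order'') is delicate in leafwise terms, since the orbit symplectic form $\omega_0$ itself degenerates linearly at $0$; in the gauge-transformation formulation this is replaced by the clean pointwise condition that $I+\sigma_s^\flat\circ\pi_{\g^*}^\sharp$ be invertible near $0$, which holds because $\pi_{\g^*}$ vanishes at $0$. In summary: your reduction and Moser scheme match the paper, but the existence of the smooth equivariant closed $2$-form $\sigma$ — the actual role of the coboundary hypothesis via the Dirac geometry of the double — is a missing step, and the averaging proposed to fill it would fail outside the compact case.
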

Our next result is a functoriality property for the Poisson linearizations. It generalizes a similar result for Ginzburg-Weinstein maps 
obtained in \cite{al:gw}.

\begin{theorem}
Let  $\g_1\to \g_2$ be a morphism of Lie bialgebras, with dual morphism
$\tau\colon \g_2^*\to \g_1^*$ exponentiating to the Poisson Lie group morphism $\ca{T}\colon G_2^*\to G_1^*$. Suppose that $\g_1$ and $\g_2$ have coboundary structures, 
determining equivariant maps $\Exp_i\colon \g_i^*\to G_i^*,\ i=1,2$ 
as above.  If $\phi_1\colon \g_1^*\to G_1$ is a germ of a bisection 
such that $\on{Exp}_1\circ \A(\phi_1)^{-1}$ is Poisson, then it is possible to choose a bisection $\phi_2\colon \g_2^*\to G_2$ such that $\on{Exp}_2\circ \A(\phi_2)^{-1}$ is Poisson, and such that furthermore
\[\ca{T}\circ (\on{Exp}_2\circ \A(\phi_2)^{-1})=
(\on{Exp}_1\circ \A(\phi_1)^{-1})\circ \tau.\]
\end{theorem}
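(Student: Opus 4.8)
The plan is to reduce the stated commutativity to an intertwining condition on the correcting bisections, and then to realize that condition while preserving the Poisson property by running the construction of the first theorem functorially. Write $L_i=\Exp_i\circ\A(\phi_i)^{-1}$, so that $L_1$ is Poisson by hypothesis and $L_1\circ\tau$ is a Poisson germ $\g_2^*\to G_1^*$ (as $\tau=\psi^*$ is a Poisson map between the linear structures); the goal is thus to produce a Poisson linearization $L_2$ lifting $L_1\circ\tau$ through $\ca{T}$. As a first technical point I would compare the germs $\ca{T}\circ\Exp_2$ and $\Exp_1\circ\tau$ from $\g_2^*$ to $G_1^*$. Both are assembled from the coboundary splittings $\g_i^*\to\dd_i$ and the exponentials of the doubles. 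Since $\psi\colon\g_1\to\g_2$ intertwines the cobrackets, it respects the coboundary data only up to an $\ad$-invariant element, so I expect $\ca{T}\circ\Exp_2$ and $\Exp_1\circ\tau$ to agree up to a correction of ``group type,'' i.e.\ up to precomposition with a bisection of $\g_2^*$.

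Granting this naturality, the target identity becomes a condition purely on the bisections. Here I would use that $\Exp_i$ intertwines the coadjoint action with the dressing action, together with the equivariance $\tau(\Ad^*_{\Psi(g)}\nu)=\Ad^*_g\,\tau(\nu)$ for $g\in G_1$, which is immediate from $\tau=\psi^*$ (with $\Psi\colon G_1\to G_2$ integrating $\psi$). The identity $\ca{T}\circ L_2=L_1\circ\tau$ then reduces to $\tau\circ\A(\phi_2)=\A(\phi_1)\circ\tau$. The naive candidate $\phi_2^0:=\Psi\circ\phi_1\circ\tau$ satisfies this intertwining, since $\tau\big(\Psi(\phi_1(\tau\nu))\cdot\nu\big)=\phi_1(\tau\nu)\cdot\tau(\nu)$; however $\Exp_2\circ\A(\phi_2^0)^{-1}$ is Poisson only along the directions coming from $\tau$, not in general. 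Conversely, the first theorem supplies Poisson-making bisections with no control over the intertwining. The whole content is to produce a single $\phi_2$ enjoying both properties.

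To do this I would make the construction of the first theorem functorial. Assuming, as is typical for such linearizations, that the correction $\A(\phi_i)^{-1}$ is obtained as the time-one flow of a time-dependent vector field $X^{(i)}_t$ on $\g_i^*$ built from $\Exp_i$, from the two Poisson structures to be matched, and from a chosen primitive of a certain exact form, I would arrange the auxiliary choices so that the vector fields are $\tau$-related, $\tau_* X^{(2)}_t=X^{(1)}_t\circ\tau$. Their flows then intertwine under $\tau$ and $\ca{T}$, which yields the commuting square, while each $L_i$ is Poisson exactly as in the first theorem. Concretely this amounts to choosing the primitives compatibly: the relevant exact $2$-forms on $\g_1^*$ and $\g_2^*$ are $\tau$-related precisely because $\tau$ and $\ca{T}$ are Poisson, and one must select primitives $\beta_1,\beta_2$ with $\beta_2$ compatible with $\beta_1$ under $\tau$.

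The hardest step is this compatible choice of auxiliary data, for two reasons. First, the naturality of $\Exp$ from the first paragraph is only approximate (the $r$-matrices are preserved by $\psi$ merely up to an invariant element), so the resulting discrepancy must be shown to be absorbable into the bisection without destroying the Poisson condition. Second, $\tau$ is in general neither injective nor surjective, so matching the primitives is a relative cohomology problem rather than a plain pullback; I expect to resolve it with a mapping-cone version of the Poincar\'e lemma near $e$, producing a homotopy operator compatible with $\tau$ and hence primitives $\beta_1,\beta_2$ that are $\tau$-related by construction. Once the vector fields are $\tau$-related, the intertwining of their flows and the resulting functoriality are formal.
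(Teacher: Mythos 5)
Your skeleton overlaps with the paper's actual proof at one genuine point: your naive candidate $\phi_2^0=\Psi\circ\phi_1\circ\tau$ is precisely the paper's pull-back bisection $\psi_2''$, obtained via Proposition \ref{prop:twobisections} and satisfying the twisted relation $(\tau,0)\circ (\A(\psi_2''),\sig_{\psi_2''})^{-1}=(\A(\psi_1),\sig_{\psi_1})^{-1}\circ (\tau,0)$. But the core mechanism is missing, and the step you yourself flag as hardest is attacked with the wrong tool. The paper forms the composition of twisted morphisms
\[
(\Exp_1,\sig_1)^{-1}\circ(\ca{T},0)\circ(\Exp_2,\sig_2)=\bigl(\Exp_1^{-1}\circ\ca{T}\circ\Exp_2,\ \sig_2'\bigr),
\]
which exhibits $\Exp_1^{-1}\circ\ca{T}\circ\Exp_2$ as a moment map for the coadjoint $\g_1$-action on $\g_2^*$ relative to the gauge-transformed structure $\pi^{\sig_2'}$, while $(\tau,0)$ is a moment map for the same action and the untransformed structure. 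The correcting bisection $\psi_2'$ relating the two is produced by the \emph{equivariant} Moser method of Section \ref{subsec:moser}: the Moser 1-forms must satisfy \emph{both} conditions of \eqref{eq:ateqn}, in particular the moment-map equation $\f{\p}{\p t}\l \Phi_t,\xi\r=\iota(\xi_M)a_t$ along an interpolation $\Phi_t$ between $\Exp_1^{-1}\circ\ca{T}\circ\Exp_2$ and $\tau$. Your proposal never invokes this second condition: you only choose primitives of exact 2-forms, and $\d a_t=-\dot\sig_t$ alone makes the flow intertwine the Poisson structures while giving no control over the underlying map, so nothing in your construction forces the corrected map to become $\tau$ --- which is exactly the ``approximate naturality'' discrepancy you deferred in your first paragraph; it does not go away.

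Moreover, the premise of your compatible-primitive step is false: the gauge 2-forms are not $\tau$-related. Poissonness of $\tau$ and $\ca{T}$ constrains bivectors, not the forms $\sig_i$; in general $\sig_2\neq\tau^*\sig_1$ (take $\g_1=0$, so $\tau^*\sig_1=0$ while $\sig_2\neq 0$), and the actual relation between $\sig_2$ and $\sig_1$ runs through the nonlinear germ $\Exp_1^{-1}\circ\ca{T}\circ\Exp_2$, not through $\tau$ --- so a mapping-cone Poincar\'e lemma over $\tau$ cannot produce the required data. There is also a structural obstruction to ``re-running the first theorem functorially'': $\phi_1$ is \emph{given}, arbitrary subject to $\sig_{\phi_1}=\sig_1$, so its construction data are not at your disposal. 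The paper's remedy is the group structure on bisections: construct $\psi_2'$ by the equivariant Moser argument above, pull back the given bisection to $\psi_2''$, and set $\psi_2=\psi_2''\psi_2'$; the final computation in the category of twisted morphisms then yields the commutative square and the identity $\sig_2=\sig_{\psi_2}$ simultaneously, the latter being exactly what makes $\Exp_2\circ\A(\psi_2)^{-1}$ Poisson. Your write-up contains no bookkeeping of the 2-forms that could deliver this last identity.
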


In \cite{al:gw}, we used the functoriality property to show that in the case of $G={\rm U}(n)$, the Gelfand-Zeitlin completely
integrable system on $\g^*={\rm u}(n)^*$ defined by Guillemin-Sternberg \cite{gu:gc} is isomorphic to the completely integrable
system defined on $G^*={\rm U}(n)^*$ by Flaschka-Ratiu \cite{fl:mo}.

By definition of a Poisson Lie group, the multiplication in $G^*$ is a Poisson morphism
\[ \on{Mult}_{G^*}\colon G^*\times G^*\to G^*\]
as is the addition map $\on{Add}_{\g^*}\colon \g^*\times\g^*\to \g^*$. We have
\begin{theorem}
Let $\g$ be a coboundary Lie bialgebra, and $G^*$ the Poisson Lie group with Lie bialgebra $\g^*$.  Let $\psi\colon \g^*\to G$ be a germ of a bisection for which $\Exp\circ \A(\psi)^{-1}$ is 
Poisson. Then it is possible to choose a germ of a bisection 
$\phi\colon \g^*\times\g^*\to G\times G$ such that the 
map $(\Exp\times\Exp)\circ \A(\phi)^{-1}$ is Poisson, and furthermore
\[ \Mult_{G^*}\circ ((\Exp\times\Exp)\circ \A(\phi)^{-1})
=\Exp\circ (\A(\psi)^{-1}\circ \on{Add}_{\g^*}).\]
\end{theorem}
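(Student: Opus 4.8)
The plan is to deduce this from the linearization and functoriality theorems by treating the pair $(\Mult_{G^*},\Add_{\g^*})$ as a substitute for the morphism pair in the functoriality theorem (Theorem 0.2). First I record that $G^*\times G^*=(G\times G)^*$, and that $\g\oplus\g$ is a coboundary Lie bialgebra with $r$-matrix $r\oplus r$; the induced splitting of
\[ 0\to\g\oplus\g\to\dd\oplus\dd\to\g^*\oplus\g^*\to 0\]
is the direct sum of two copies of the splitting for $\g$, so the associated modified exponential is precisely $\Exp\times\Exp$. Hence the linearization theorem (Theorem 0.1), applied to $G\times G$, already furnishes germs of bisections $\phi_0$ for which $(\Exp\times\Exp)\circ\A(\phi_0)^{-1}$ is Poisson. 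Since $\Add_{\g^*}$ and the given $L:=\Exp\circ\A(\psi)^{-1}$ are Poisson, the target map $L\circ\Add_{\g^*}$ is Poisson as well; the content of the theorem is to upgrade $\phi_0$ to a bisection $\phi$ that keeps $(\Exp\times\Exp)\circ\A(\phi)^{-1}$ Poisson while also enforcing $\Mult_{G^*}\circ((\Exp\times\Exp)\circ\A(\phi)^{-1})=L\circ\Add_{\g^*}$.

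To see what must be solved, compose the desired identity on the left with the germ of diffeomorphism $\A(\psi)\circ\Exp^{-1}$ and on the right with $\A(\phi)$. Using $\Exp=L\circ\A(\psi)$ this collapses to the single equation
\[ \Add_{\g^*}\circ\A(\phi)=\A(\psi)\circ\Exp^{-1}\circ\Mult_{G^*}\circ(\Exp\times\Exp)=:Q,\]
a fixed germ determined by $\psi$. As $\Add_{\g^*}$ is a linear surjection, hence a submersion, this is a \emph{lifting} condition: it pins down $\A(\phi)$ only transverse to the fibres of $\Add_{\g^*}$ and leaves the fibre directions -- the anti-diagonal in $\g^*\times\g^*$ -- entirely free. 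The strategy is therefore to choose any lift $\phi_0$ of $Q$ and then deform it through bisections $\phi_t$, all satisfying $\Add_{\g^*}\circ\A(\phi_t)=Q$, by a Moser-type homotopy that kills the two-form measuring the failure of $(\Exp\times\Exp)\circ\A(\phi_t)^{-1}$ to be Poisson -- exactly the homotopy underlying Theorems 0.1 and 0.2, now run inside the affine family cut out by the multiplicative constraint.

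The main obstacle is precisely this \emph{constrained solvability}: the infinitesimal Moser equation must be solvable with a deformation vector field tangent to $\{\Add_{\g^*}\circ\A(\phi)=Q\}$, i.e. the relevant Lichnerowicz/Poisson-cohomology obstruction must vanish after the constraint is imposed. What makes this plausible -- and what I would have to verify carefully -- is that $\Mult_{G^*}\circ((\Exp\times\Exp)\circ\A(\phi)^{-1})$ and $L\circ\Add_{\g^*}$ carry the same leaf structure: $\Exp$ and $\Exp\times\Exp$ intertwine the coadjoint with the dressing actions, and $\Mult_{G^*}$ (resp. $\Add_{\g^*}$) is compatible with the dressing (resp. coadjoint) action, so the two Poisson maps differ only by a gauge that the homotopy can absorb. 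I expect the one genuine subtlety, as compared with Theorem 0.2, to be that $\Mult_{G^*}$ is not a Poisson Lie group morphism -- its differential $\Add_{\g^*}$ is not a Lie bialgebra morphism for the non-abelian brackets on $\g^*\oplus\g^*$ and $\g^*$ -- so Theorem 0.2 cannot be invoked verbatim; nevertheless its proof uses only that the maps involved are Poisson and respect the coadjoint/dressing moment data, which remains true here, so the construction transfers. It then remains to check that the resulting $\phi$ is indeed a germ of a bisection, which follows since it is a $C^0$-small deformation of $\phi_0$.
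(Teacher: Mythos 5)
You correctly reduce the problem to the lifting equation $\Add_{\g^*}\circ \A(\phi)=Q$ with $Q=\A(\psi)\circ\Exp^{-1}\circ\Mult_{G^*}\circ(\Exp\times\Exp)$, and you are right that $r\oplus r$ makes $\g\oplus\g$ coboundary with modified exponential $\Exp\times\Exp$. But there is a genuine gap at exactly the point you flag and then wave away: the claim that the proof of the functoriality theorem ``uses only that the maps involved are Poisson and respect the coadjoint/dressing moment data, which remains true here.'' It does not remain true. In the functoriality theorem the chain of Dirac relations closes because $e_{G_2^*}(\nu(\xi))\sim_{(\ca{T},0)}e_{G_1^*}(\xi)$, i.e.\ the morphism relates the \emph{diagonal-type} generators on both sides. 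For the multiplication one only has $e^{\on{tw}}_{G^*\times G^*}(\xi)\sim_{(\Mult,0)}e_{G^*}(\xi)$, where the twisted generator at $(u_1,u_2)$ is $(e_{G^*}(\xi),e_{G^*}(\Ad^*_{u_1^{-1}}\xi))$: that is, $\Mult_{G^*}$ is a moment map for the \emph{twisted} diagonal action, whereas $(\Exp\times\Exp,\pr_1^*\sig+\pr_2^*\sig)$ is equivariant for the ordinary diagonal dressing action. These are two genuinely different $\g$-actions on $G^*\times G^*$, not two gauge-equivalent presentations of one action, so your heuristic that the two Poisson maps ``differ only by a gauge that the homotopy can absorb'' is wrong as stated, and the equivariant Moser method of Section \ref{subsec:moser} (which intertwines two moment maps for the \emph{same} action, via \eqref{eq:ateqn}) cannot be launched. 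The ``constrained solvability'' you defer to later verification is precisely this unproved core, and nothing in your sketch supplies a reason for the constrained obstruction to vanish.

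The paper closes this gap with a separate construction, Proposition \ref{prop:chi}, in which the coboundary structure is used a second time (not only through $\Exp$): setting $\lambda(u)=u^{-1}\exp(j(\mu))$ for $u=\Exp\mu$, the bisection $\chi(u_1,u_2)=(e,\lambda(u_1))$ of $\G\times\G$ satisfies $e_{G^*\times G^*}(\xi)\sim_{(\A(\chi),\sig_\chi)}e^{\on{tw}}_{G^*\times G^*}(\xi)$; the $\g$-equivariance of the splitting $j$ is exactly what makes $\A(\chi)$ intertwine the diagonal with the twisted diagonal action (and Proposition \ref{prop:agcot} upgrades the intertwining of vector fields to the Courant-algebroid relation). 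With this untwisting in hand, the composition $(m,\sig')=(\Exp,\sig)^{-1}\circ(\Mult,0)\circ(\A(\chi),\sig_\chi)\circ(\Exp\times\Exp,\pr_1^*\sig+\pr_2^*\sig)$ exhibits $m$ and $\Add_{\g^*}$ as two moment maps for the \emph{same} diagonal coadjoint action on $\g^*\times\g^*$; the equivariant Moser method then produces $\phi'$, and $\phi=\phi''\circ\phi'\circ\wt{\chi}^{-1}$ (with $\phi''$ and $\wt{\chi}$ the pull-backs of $\psi$ and $\chi$ under $\Add$ and $\Exp\times\Exp$) satisfies both $\sig_\phi=\pr_1^*\sig+\pr_2^*\sig$ and the commutation with $\Mult$. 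To repair your argument you would need to construct the analogue of $\chi$ (or otherwise prove the vanishing of your constrained Moser obstruction); without it, your plan remains a plausible outline whose decisive step is missing.
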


It is thus possible to choose the Poisson linearizations of $G^*$ and of $G^*\times G^*$  in such a way that the multiplication in $G^*$ 
becomes the addition in $\g^*$. In the case of a compact Lie group $G$ with Lu-Weinstein Poisson structure, the result holds \emph{globally}, not only on the level of germs. It has the following interesting corollary: let
$\mathcal{O}_1, \mathcal{O}_2 \subset \g^*$ be coadjoint orbits 
and  $\mathcal{D}_1=\Exp(\mathcal{O}_1), \mathcal{D}_2=\Exp(\mathcal{O}_2)$ be the corresponding dressing orbits. Then,
$$
\Exp(\mathcal{O}_1+\mathcal{O}_2) = \mathcal{D}_1\mathcal{D}_2.
$$

\vskip.4in
\noindent{\bf Acknowledgments.} It is a pleasure to thank David Li-Bland and Jiang-Hua Lu for discussions and helpful comments. 
A.A. was supported by the grant MODFLAT of the European Research Council and by the grants
140985 and  141329 of the Swiss National Science Foundation.
E.M. was supported by an NSERC Discovery Grant.

\vskip.4in
\noindent{\bf Conventions.} 
The \emph{flow} $F_t$ of a time dependent vector field $v_t$ on a manifold $M$ is defined in terms of the action on functions 
by the equation 
\[ \L(v_t)\circ (F_t^{-1})^*=\f{\p}{\p t} (F_t^{-1})^*;\]
%
%
equivalently 
\begin{equation}\label{eq:timeder}
\f{\p}{\p t}-\L(v_t)=(F_t^{-1})^*\circ \f{\p}{\p t} \circ F_t^*
\end{equation}
as operators on $t$-dependent functions. 
The same formula applies to the action on $t$-dependent differential forms or other tensor fields.

Given a bivector field $\pi$ on a manifold $M$, we denote by $\pi^\sharp\colon T^*M\to TM$ the bundle map
$\mu\mapsto \iota(\mu)\pi=\pi(\mu,\cdot)$. For a 2-form $\om$ we denote by $\om^\flat$ the 
bundle map $TM\to T^*M,\ v\mapsto \om(v,\cdot)$. 

An action of a Lie algebra $\g$ on a manifold $M$ is a Lie algebra morphism $\g\to \Gamma(TM),\ \xi\mapsto \xi_M$ to the Lie algebra of vector fields such that the action map $M\times\g\to TM$ is smooth. An action of a Lie group $G$ on $M$ is a Lie group morphism 
$\A\colon G\to \on{Diff}(M)$ such that the action map 
$G\times M\to M,\ (g,m)\mapsto g.m:=\A(g)(m)$ is smooth. 
A Lie group action defines an action of its Lie algebra $\g$ by $\xi_M(f)=\f{\p}{\p t}|_{t=0} f(\exp(-t\xi).m)$. The coadjoint action of $G$ on $\g^*$ is denoted $\Ad^*$; thus $\Ad^*(g)=(\Ad(g^{-1}))^*$. 
Similarly we write $\ad^*(\xi)=-\ad(\xi)^*$ for the coadjoint representation of $\g$ on $\g^*$.

\section{Moser method for Poisson manifolds}\label{sec:moser}
It is a well-known fact that Lie algebra structures on a vector space $\g$ are in 1-1 correspondence with linear Poisson structures on 
the dual space $\g^*$. If $e_a$ is a basis of $\g$, defining structure constants $f_{ab}^c$ with $[e_a,e_b]=\sum_c f_{ab}^c e_c$, 
then the corresponding \emph{Lie--Poisson structure} $\pi_{\g^*}$ on $\g^*$ reads as
\[ \pi_{\g^*}=-\hh \sum_{abc} f_{ab}^c \mu^c  \f{\p}{\p \mu_a}\wedge \f{\p}{\p \mu_b}.\]
The vector fields $\xi_{\g^*}=-\pi_{\g^*}^\sharp \l\d\mu,\xi\r,\ \ \xi\in\g$, are the generators for the coadjoint action. In this paper, we will study Poisson Lie groups $(G,\pi_G)$ whose dual 
$(G^*,\pi_{G^*})$ is linearizable, i.e. Poisson diffeomorphic to 
$(\g^*,\pi_{\g^*})$ near the group unit.  
Our main technique in proving linearizability are \emph{gauge transformations} \cite{bu:ga}, together with the \emph{Moser method} for Poisson manifolds \cite{al:ka,al:gw,al:li}. We will find it convenient to develop this theory within the frameworks of Dirac geometry 
\cite{al:pur,bu:ga,cou:di} and symplectic groupoids \cite{cos:gro,wei:sym,kar:ana}.

\subsection{Dirac geometry}
For any manifold $M$, let $\TM=TM\oplus T^*M$ with the metric 
\[ \l(v_1,\alpha_1),(v_2,\alpha_2)\r
=\alpha_1(v_2)+\alpha_2(v_1).\] 
The space of sections $\Gamma(\TM)$ comes equipped with the 
\emph{Courant bracket} 
\begin{equation}
\label{eq:courbracket}\Cour{(v_1,\alpha_1),(v_2,\alpha_2)}=([v_1,v_2],\L_{v_1}\alpha_2-\iota_{v_2}\d\alpha_1)
\end{equation}
for vector fields $v_1,v_2$ and 1-forms $\alpha_1,\alpha_2$.
A maximal isotropic subbundle $E\subset \TM$ whose space of sections is closed under this bracket is called a 
\emph{Dirac structure} on $M$. A Poisson structure $\pi$ defines a Dirac structure given by the graph of $\pi$
\[\on{Gr}(\pi)=\{(\pi^\sharp(\alpha),\alpha)|\ \alpha\in T^*M\};\]
conversely, any Dirac structure with $E\cap TM=0$ defines a Poisson structure in this way.

Suppose $F\colon M\to M'$ is a smooth map and $\sigma\in\Om^2(M)$ a closed 2-form. Given $(v,\alpha)\in \TM=TM\oplus T^*M$ and $(v',\alpha')\in 
\TM'=TM'\oplus T^*M'$, we write 
\begin{equation}\label{eq:related}
(v,\alpha)\sim_{(F,\sigma)} (v',\alpha')
\ \Leftrightarrow \ v'=TF(v) \mbox{ and }(TF)^*\alpha'=\alpha+\iota(v)\sigma.
\end{equation}
Given Dirac structures $E\subset \TM$ and $E'\subset \TM'$, we say that $(F,\sigma)$ defines a (strong) Dirac 
morphism if every element of $E_{F(m)}'$ is $(F,\sigma)$-related to a unique element of $E_m$. In the case of 
Poisson structures $E=\on{Gr}(\pi),\ E'=\on{Gr}(\pi')$ we will call $(F,\sigma)$ a \emph{twisted Poisson map}. (It is an ordinary  
Poisson map for $\sigma=0$.) 

The composition of  two \emph{morphisms} 
$(F,\sigma)\colon M\to M'$ and $(F',\sigma')\colon M'\to M''$ is 
given as 
\[ (F',\sigma')\circ (F,\sigma)=(F'\circ F,\sigma+F^*\sigma').\]
In particular, if $F$ is invertible, then so is $(F,\sigma)$ 
with inverse $(F^{-1},-(F^{-1})^*\sigma)$. 

\subsection{Gauge transformations}
Similar to \eqref{eq:related} one has a notion of relation between \emph{sections} of 
$\TM$ and $\TM'$. The relation of sections is compatible with the metric and with the Courant bracket. Hence, the 
semi-direct product $\on{Diff}(M)\ltimes \Om^2_{\on{closed}}(M)$ acts by automorphisms of $\TM$, preserving the 
Courant bracket and the metric. In particular, it takes Dirac structures to Dirac structures.

Let $(M,\pi)$ be a Poisson manifold.  A closed 2-form $\sigma$ on $M$
is said to define a \emph{gauge transformation} \cite{bu:ga} of $\pi$ if 
the image of $\on{Gr}(\pi)$ 
under the morphism $(\on{id}_M,\sigma)$ is transverse to $TM$, hence is again a Poisson structure
$\pi^\sigma$. The transversality condition is equivalent to invertibility of the 
bundle
map $I+\sigma^\flat\circ \pi^\sharp$, and one has
\begin{equation}\label{eq:bunmaps}
 (\pi^\sigma)^\sharp=\pi^\sharp \circ (I+\sigma^\flat\circ \pi^\sharp)^{-1}.
\end{equation}
This Poisson structure $\pi^\sigma$ has the same symplectic leaves as $\pi$, but with the symplectic form on the leaves 
changed by the  pull-back of $\sigma$. Note also that a morphism $(F,\sigma)$ between two Poisson manifolds $M,M'$
is a $\sigma$-twisted Poisson map if and only if $\sigma$ defines 
a gauge transformation of $\pi$, and $F$ is an ordinary Poisson map 
$F\colon (M,\pi^\sigma)\to (M',\pi')$.

\subsection{Hamiltonian actions on Poisson manifolds}\label{subsec:ham}
Let $\g^*$ be the dual of a Lie algebra, equipped with the Lie-Poisson structure.  As is well-known, 
the cotangent Lie algebroid $T^*\g^*$ is canonically isomorphic to the action Lie algebroid for the coadjoint action. 
The isomorphism is given by the map $e_{\g^*}\colon \g\to \Gamma(\on{Gr}(\pi_{\g^*}))$, where 
\[ e_{\g^*}(\xi)=(\xi_{\g^*},-\l\d\mu,\xi\r)\]
for $\xi\in\g$.  If $\Phi\colon M\to \g^*$ is a Poisson map, then there are uniquely defined sections $e_M(\xi)$ of 
$\on{Gr}(\pi)$ such that 
\[ e_M(\xi)\sim_{(\Phi,0)} e_{\g^*}(\xi).\] 
Write $e_M(\xi)=(\xi_M,-\l\d\Phi,\xi\r)$. Then $\xi\mapsto \xi_M$ defines a Hamiltonian $\g$-action on $M$, with $\Phi$ as its moment map.  That is, $\Phi$ is $\g$-equivariant and 
\[ \xi_M=-\pi_M^\sharp \l\d\Phi,\xi\r \]
for all $\xi\in\g$.  
%
For example, the addition map $\on{Add}\colon \g^*\times\g^*\to \g^*$ 
satisfies $e_{\g^*\times\g^*}(\xi,\xi)\sim_{\on{Add}}e_{\g^*}(\xi)$, hence it is the moment map for the diagonal $\g$-action on 
$\g^*\times \g^*$. 

Given Hamiltonian $\g$-actions on $M$ and $M'$, with moment maps 
$\Phi\colon M\to \g^*$ and $\Phi'\colon M'\to \g^*$, we say that 
$(F,\sigma)$ is a \emph{twisted morphism of 
Hamiltonian $\g$-spaces} if 
\[ e_M(\xi)\sim_{(F,\sigma)} e_{M'}(\xi)\]
for all $\xi\in\g$. This is equivalent to the $\g$-equivariance of $\Phi$ together 
with the following moment map property of $\sigma$, 
$\iota(\xi_M)\sigma=-\d\l F^*\Phi'-\Phi,\xi\r$. (In particular, $\sigma$ must be invariant.)
%
  
%


\subsection{Moser flows}\label{subsec:moser}
Let $\sig_t$ be a smooth family of closed 2-forms on the Poisson manifold $(M,\pi)$, 
with $\sig_0=0$, defining gauge transformations $\pi_t=\pi^{\sig_t}$ for all $t$. Suppose furthermore that 
\begin{equation}\label{eq:sigmaeq}
 \f{\p}{\p t} \sig_t=-\d a_t
\end{equation}
for a smooth family of 1-forms $a_t$. We call $a_t$ the \emph{Moser 
1-form}, and $v_t=-\pi_t^\sharp(a_t)$ the corresponding 
\emph{Moser vector field}. 
The flow $F_t$ of the Moser vector field
(called the Moser flow) intertwines the Poisson structures
(see, e.g., \cite{al:gw}):
\begin{equation}\label{eq:intert}
 \pi_t=(F_t)_*\pi.\end{equation}
Suppose that we are also given a Hamiltonian $\g$-action on $M$, with moment map $\Phi\colon M\to \g^*$. 
Let $e_M(\xi)\in\Gamma(\on{Gr}(\pi))$ be the sections defined by the condition 
$e_M(\xi)\sim_{(\Phi,0)}e_{\g^*}(\xi),\ \xi\in\g$. Suppose $\Phi_t\colon M\to \g^*$ is a family 
of maps with $\Phi_0=\Phi$ and 
\begin{equation}\label{eq:deformedmomentmap}
 e_M(\xi)\sim_{(\Phi_t,\sig_t)} e_{\g^*}(\xi),\end{equation}
for all $t$. That is, $\Phi_t$ is a moment map for the same action, relative to the gauge transformed Poisson structure 
$\pi_t$. If the family of 1-forms $a_t$ satisfies the two conditions
\begin{equation}\label{eq:ateqn}
\f{\p}{\p t}\sig_t=-\d a_t,\ \ \f{\p}{\p t}\l\Phi_t,\xi\r=\iota(\xi_M) a_t
\end{equation} 
(which in particular implies the invariance of $a_t$), then the Moser flow of $v_t$ intertwines not only the bivector fields but also the moment maps:
\[ \Phi_t=(F_t^{-1})^*\Phi.\]
\begin{remark}
In general, the time dependent vector field $v_t$ need not be complete, hence its flow may not exist for all $t$. Global existence of the flow 
of $v_t$ is guaranteed if the symplectic leaves of $M$ are compact (since $v_t$ is tangent to the leaves). 
\end{remark}


\subsection{Symplectic groupoids}\label{subsec:bis}
In this section we interpret the Moser method for Poisson
manifolds in terms of symplectic groupoids. In this context, the Moser flow
will be given by an action of bisections. Let us first recall some definitions.

Let ${\mathcal{G}}\rra M$ be a Lie groupoid, with source and target map $\sz,\tz\colon {\mathcal{G}}\to M$ and unit map $\iz\colon M\to {\mathcal{G}}$. We denote by $\G^{(2)}\subset \G\times\G$ the submanifold of composable
elements, by 
$\Mult_{\mathcal{G}}\colon {\mathcal{G}}^{(2)}\to {\mathcal{G}}, (\gamma_1,\gamma_2)\mapsto \gamma_1 \gamma_2$ the multiplication, and by $\on{Inv}_{\mathcal{G}}\colon {\mathcal{G}}\to {\mathcal{G}}$ the inversion. An embedded submanifold of $\G$ on which both 
$\sz$ and $\tz$ restrict to diffeomorphisms is called a 
\emph{bisection} of $\G$. The set $\Gamma(\G)$ of bisections is a group under groupoid multiplication \footnote{If $\G$ is a bundle of Lie groups, so that the source and target map coincide, then a bisections is just an ordinary section.}, with group unit the identity bisection
$M\subset \G$. The Lie algebroid $A\G$ of $\G$ is defined as follows: As a vector bundle, 
$A\G=\nu(M,\G)$ is the normal bundle of $M$ inside $\G$; the Lie bracket is induced from the group commutator 
on  $\Gamma(\G)$, and the anchor $\a\colon A\G\to TM$ is induced from the difference $T\tz-T\sz\colon T\G\to TM$. 

From now on we will regard bisections as 
sections $\phi\colon M\to {\mathcal{G}}$ of the source map
(i.e., $\sz\circ \phi=\on{id}_M$), with the additional property that
\begin{equation}\label{eq:action}\A(\phi):=\tz\circ \phi\end{equation}
is a diffeomorphism of $M$. Note that $\A\colon \Gamma(\ca{G})\to \on{Diff}(M)$ is a group action. 
In addition, there are two commuting actions  $\A^L,\ \A^R\colon \Gamma(\ca{G})\to \on{Diff}(\ca{G})$  
on the groupoid $\mathcal{G}$: 
\[ \A^L(\phi)(\gamma)=\phi(\tz(\gamma))\, \gamma,\ \ \ \ 
\A^R(\phi)(\gamma)=\gamma\,\phi(\sz(\gamma))^{-1}.\]
%
These satisfy 
\[ \tz \circ \A^L(\phi)=\A(\phi)\circ \tz,\ \
\tz \circ \A^R(\phi)=\tz,\ \ \sz \circ \A^L(\phi)=\sz,\ \ \sz \circ \A^R(\phi)=\A(\phi)\circ \sz.\]
The groupoid inversion interchanges the 
actions $\A^L$ and $\A^R$. 
%
We will also consider the adjoint action 
\[ \Ad(\phi)(\gamma)=\phi(\tz(\gamma))\gamma\phi(\sz(\gamma))^{-1};\] 
this action is by automorphisms of the Lie groupoid $\G$, with underlying map $\A(\phi)$. 
It induces an action by Lie algebroid automorphisms of $A\G$, which is again denoted by $\phi\mapsto \Ad(\phi)$.

A differential form $\om\in \Om(\G)$ on the groupoid is called \emph{multiplicative} if it has the property
\begin{equation}\label{eq:symplgr}
 \Mult_{\mathcal{G}}^*\om=\pr_1^*\om+\pr_2^*\om
\end{equation}
where $\pr_i\colon {\mathcal{G}}^{(2)}\to {\mathcal{G}}$ are the two 
projections. 
A groupoid with a multiplicative symplectic 2-form $\om\in\Om^2(\G)$ is called a \emph{symplectic groupoid} \cite{cos:gro,wei:sym,kar:ana}.
For any symplectic groupoid, the inclusion $\iz\colon M\to \ca{G}$ is 
a Lagrangian embedding, and  
%
the tangent spaces to  the $\tz$-fibers and to the $\sz$-fibers are $\om$-orthogonal:  
\[ \ker(T\tz)^\om=\ker(T\sz).\]
The manifold $M$ inherits a Poisson structure $\pi=\pi_M$ for which the target map $\tz$ is  Poisson and the 
source map $\sz$ is anti-Poisson. That is, 
\[ \pi_\G\sim_\tz \pi,\ \ \pi_\G\sim_\sz -\pi,\]
where $\pi_\G$ is the Poisson structure on $\G$ given by 
$\pi_\G^\sharp\circ \om^\flat=\id$. One calls the symplectic groupoid $({\mathcal{G}},\om)$ an \emph{integration} of the Poisson manifold $(M,\pi)$. Since $M\subset \G$ is Lagrangian, the symplectic form on $\G$ gives a
non-degenerate pairing between
$TM$ and $\nu(M,\G)=A\G$, thus identifying $A\G\cong T^*M$ in such a way
that the following diagram commutes:
\begin{equation}\label{eq:identification}
\begin{CD}
{T\G|_M} @>{\om^\flat}>> {T^*\G|_M}\\
@VVV @VVV\\
A\G @>>{\cong}> {T^*M}
\end{CD} \end{equation}
The anchor map for the resulting cotangent Lie algebroid
is the map $\pi^\sharp\colon T^*M\to TM$.


\begin{proposition}\label{prop:integration}
Suppose $({\mathcal{G}},\om)$ integrates $(M,\pi)$, and  $\sigma\in \Om^2(M)$ is a closed 2-form defining a gauge 
transformation of $\pi$. Then 
\[ \om+\tz^*\sigma,\ \ \om-\sz^*\sigma,\ \ \om+\tz^*\sigma-\sz^*\sigma\]
are all symplectic. Furthermore, 
$(\G,\om+\tz^*\sigma-\sz^*\sigma)$ integrates $(M,\pi^\sigma)$. 
\end{proposition}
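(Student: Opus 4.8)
The plan is to treat the three forms uniformly by writing each as a deformation of $\om$ through its bundle map and reducing non-degeneracy to the gauge hypothesis on $M$ via a Sylvester-type exchange of factors. Closedness of all three is immediate: $\sigma$ is closed, hence so are $\tz^*\sigma$ and $\sz^*\sigma$, while $\om$ is symplectic.

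For the non-degeneracy statement I would focus on $\om':=\om+\tz^*\sigma-\sz^*\sigma$; the one-sided forms $\om+\tz^*\sigma$ and $\om-\sz^*\sigma$ are handled by the same argument with one term dropped (the signs working out because $\sz$ is anti-Poisson). Writing $(\tz^*\sigma)^\flat=(T\tz)^*\circ\sigma^\flat\circ T\tz$ and likewise for $\sz$, one gets $(\om')^\flat=\om^\flat\circ(I+\pi_\G^\sharp\circ[(\tz^*\sigma)^\flat-(\sz^*\sigma)^\flat])$, using $\pi_\G^\sharp=(\om^\flat)^{-1}$. Thus $\om'$ is non-degenerate iff $I+QP$ is invertible, where $P\colon T\G\to T^*M\oplus T^*M,\ X\mapsto(\sigma^\flat T\tz X,\sigma^\flat T\sz X)$ and $Q\colon T^*M\oplus T^*M\to T\G,\ (\alpha,\beta)\mapsto\pi_\G^\sharp((T\tz)^*\alpha-(T\sz)^*\beta)$.

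The crux is this: by the Sylvester identity $I+QP$ is invertible iff $I+PQ$ is, and $PQ$ is computed entirely on $M$. Using that $\tz$ is Poisson and $\sz$ anti-Poisson, i.e.\ $T\tz\,\pi_\G^\sharp(T\tz)^*=\pi^\sharp$ and $T\sz\,\pi_\G^\sharp(T\sz)^*=-\pi^\sharp$, together with the symplectic orthogonality $\ker(T\tz)^\om=\ker(T\sz)$, which forces the mixed terms $T\tz\,\pi_\G^\sharp(T\sz)^*$ and $T\sz\,\pi_\G^\sharp(T\tz)^*$ to vanish, one finds $PQ=\diag(\sigma^\flat\pi^\sharp,\sigma^\flat\pi^\sharp)$. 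Hence $I+PQ=\diag(I+\sigma^\flat\pi^\sharp,I+\sigma^\flat\pi^\sharp)$, invertible precisely because $\sigma$ defines a gauge transformation of $\pi$. This step carries the real content; the vanishing of the mixed terms (equivalently, that $\tz^*$- and $\sz^*$-functions Poisson-commute on $\G$) is where the symplectic groupoid structure enters, and I expect it to be the main obstacle to get cleanly.

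It remains to identify the integrated structure. Multiplicativity of $\om'$ follows from that of $\om$ together with $\tz\circ\Mult_\G=\tz\circ\pr_1$, $\sz\circ\Mult_\G=\sz\circ\pr_2$ and $\sz\circ\pr_1=\tz\circ\pr_2$ on $\G^{(2)}$, giving $\Mult_\G^*(\tz^*\sigma-\sz^*\sigma)=\pr_1^*(\tz^*\sigma-\sz^*\sigma)+\pr_2^*(\tz^*\sigma-\sz^*\sigma)$. So $(\G,\om')$ is a symplectic groupoid integrating a Poisson structure $\wt\pi$ characterized by $\wt\pi^\sharp=T\tz\,((\om')^\flat)^{-1}(T\tz)^*$. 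Substituting $((\om')^\flat)^{-1}=(I+QP)^{-1}\pi_\G^\sharp$ and expanding via $(I+QP)^{-1}=I-Q(I+PQ)^{-1}P$, the same two structural facts collapse everything to $\wt\pi^\sharp=\pi^\sharp\bigl(I-(I+\sigma^\flat\pi^\sharp)^{-1}\sigma^\flat\pi^\sharp\bigr)=\pi^\sharp(I+\sigma^\flat\pi^\sharp)^{-1}$, which equals $(\pi^\sigma)^\sharp$ by \eqref{eq:bunmaps}. Hence $\wt\pi=\pi^\sigma$, as claimed.
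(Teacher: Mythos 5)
Your proof is correct, but it takes a genuinely different route from the paper's. The paper argues by \emph{iterating gauge transformations}: since $\tz$ is Poisson and $\sz$ is anti-Poisson, the pull-backs $\tz^*\sigma$ and $-\sz^*\sigma$ each define gauge transformations of $\pi_\G$ (invoking, as a known fact, that gauge 2-forms pull back along Poisson maps); the identity $\pi_\G(\sz^*\alpha,\tz^*\beta)=0$ then shows that $\tz$ remains Poisson after gauging by $-\sz^*\sigma$, i.e.\ $\tz_*\pi_\G^{-\sz^*\sigma}=\pi$, so one may gauge a second time by $\tz^*\sigma$ to obtain the non-degenerate $\pi_\G^{\tz^*\sigma-\sz^*\sigma}$, and the identification of the base structure with $\pi^\sigma$ comes for free from this functorial bookkeeping, with multiplicativity asserted without computation. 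You instead perform a single block-operator computation, reducing non-degeneracy of all three forms at once to invertibility of $I+\sigma^\flat\pi^\sharp$ on $M$ via the exchange of $I+QP$ and $I+PQ$, and then identify the base structure by explicitly inverting $(\om')^\flat$ with $(I+QP)^{-1}=I-Q(I+PQ)^{-1}P$. Both proofs ultimately rest on the same two structural facts, namely $T\tz\,\pi_\G^\sharp\,(T\tz)^*=\pi^\sharp$, $T\sz\,\pi_\G^\sharp\,(T\sz)^*=-\pi^\sharp$, and the vanishing of the mixed terms coming from $\ker(T\tz)^\om=\ker(T\sz)$ — indeed, the unproved first step of the paper's argument (pull-back of a gauge 2-form along a Poisson submersion is again a gauge 2-form) is precisely an instance of your Sylvester exchange, so your computation effectively inlines the lemma that the paper uses as a black box. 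What your route buys: it is self-contained, treats the three forms and the identification $\wt\pi=\pi^\sigma$ uniformly in one stroke, and makes the multiplicativity check explicit (the cancellation $\pr_1^*\sz^*\sigma=\pr_2^*\tz^*\sigma$ on $\G^{(2)}$, which the paper leaves implicit). What the paper's route buys: brevity and conceptual transparency within the gauge-transformation formalism already set up in Section~1, with the base Poisson structure obtained functorially rather than by re-deriving the bundle-map formula \eqref{eq:bunmaps}. Note that both arguments rely at the end on the same standard fact that a multiplicative symplectic form induces a unique Poisson structure on the base for which $\tz$ is Poisson.
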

\begin{proof}
Since $\sig$ defines a gauge transformation of $\pi$, and $\tz$ is a Poisson map, the pull-back $\tz^*\sigma$ defines a gauge transformation of $\pi_\G$. Similarly, $-\sz^*\sigma$ defines a gauge transformation. By definition, the gauge transformed Poisson structure on $\G$ is again non-degenerate. From $\pi_\G(\sz^*\alpha,\tz^*\beta)=0$ one obtains that $\pi_\G^\sharp\circ \tz^*=
(\pi_\G^{-\sz_*\sigma})^\sharp\circ \tz^*$, hence
$\tz_*\pi_\G^{-\sz^*\sigma}=\pi$. We may hence iterate the gauge transformations, obtaining a well-defined and non-degenerate 
Poisson structure 
$(\tz_*\pi_\G^{-\sz^*\sigma})^{\tz^*\sigma}=\pi_\G^{\tz^*\sigma-\sz^*\sigma}$, with corresponding symplectic form
$\om+\tz^*\sigma-\sz^*\sigma$. Since the latter is multiplicative, the proof is complete.
\end{proof}

\begin{remark}
While Poisson manifolds need not admit an integration 
to a symplectic groupoid, there always exists an integration to a \emph{local} symplectic groupoid, unique up to isomorphism. 
Local Lie groupoids $\G\rra M$ are defined similar to local Lie groups \cite{pra:tro}: Roughly, instead of a globally defined multiplication
$\Mult_\G\colon \ca{G}^{(2)}\to \G$, the map $\Mult_\G$ is only defined on an open neighborhood of $(M\times\G)\cup (\G\times M)\subseteq \ca{G}^{(2)}$, and the axioms are only required for elements on which the products are defined.  
 A nice geometric construction 
of the local symplectic groupoid integrating a Poisson manifold was obtained by Crainic and Marcut \cite{cra:exi}. 
The considerations in this section apply to the local symplectic groupoids with simple modifications. 
\end{remark}


\subsection{Action of bisections on cotangent algebroid}
\begin{proposition}\label{prop:bisar}
For any symplectic groupoid $(\G,\om)$ over $(M,\pi)$, the group $\Gamma(\G)$ of bisections acts on $(M,\pi)$ by twisted Poisson automorphisms, via the group homomorphism
\begin{equation}\label{eq:dagr}
 \Gamma(\G)\to \on{Diff}(M)\ltimes \Om^2(M),\ \phi\mapsto (\A(\phi),\sigma_\phi)\end{equation}
where 
\begin{equation}\label{eq:sigphi} 
\sigma_\phi=\phi^*\om\end{equation}
Similarly, each of the following maps defines actions of $\Gamma(\G)$ on $\G$ by twisted Poisson automorphisms:
\begin{equation}\label{eq:bisectionaction}
 \phi\mapsto (\A^L(\phi),\,\tz^*\sig_\phi),\ \ \ \phi\mapsto(\A^R(\phi),\,-\sz^*\sig_\phi),\ \ \ 
\phi\mapsto (\Ad(\phi),\,\tz^*\sig_\phi-\sz^*\sig_\phi).\end{equation}
\end{proposition}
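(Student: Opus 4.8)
The plan is to reduce every assertion to the multiplicativity \eqref{eq:symplgr} of $\om$, supplemented by the elementary remark that for a diffeomorphism $F$ a pullback identity $F^*\om=\om+\tau$ with $\tau$ closed exhibits $(F,\tau)$ as a twisted Poisson automorphism of $(\G,\pi_\G)$. First I would record the product rule for bisections, $\phi_1\phi_2=\Mult_\G\circ(\phi_1\circ\A(\phi_2),\phi_2)$, and rewrite the three maps on $\G$ as $\Mult_\G$ applied to a single bisection in one slot:
\[ \A^L(\phi)=\Mult_\G\circ(\phi\circ\tz,\id),\qquad \A^R(\phi)=\Mult_\G\circ(\id,\Inv\circ\phi\circ\sz),\qquad \Ad(\phi)=\A^L(\phi)\circ\A^R(\phi).\]
Pulling $\om$ back through these identities with \eqref{eq:symplgr}, and using the standard relation $\Inv^*\om=-\om$ (itself a consequence of \eqref{eq:symplgr} and the Lagrangian condition on the units), gives
\[ \A^L(\phi)^*\om=\om+\tz^*\sigma_\phi,\qquad \A^R(\phi)^*\om=\om-\sz^*\sigma_\phi,\qquad \Ad(\phi)^*\om=\om+\tz^*\sigma_\phi-\sz^*\sigma_\phi,\]
with $\sigma_\phi=\phi^*\om$ as in \eqref{eq:sigphi}; note $\sigma_\phi$ is closed since $\om$ is. Applying the same computation to the product rule yields the cocycle identity $\sigma_{\phi_1\phi_2}=\sigma_{\phi_2}+\A(\phi_2)^*\sigma_{\phi_1}$.

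Granting these formulas, the statements on $\G$ follow at once. Each of $\A^L(\phi),\A^R(\phi),\Ad(\phi)$ is a diffeomorphism whose pullback of $\om$ is $\om$ plus one of the closed $2$-forms $\tz^*\sigma_\phi,\,-\sz^*\sigma_\phi,\,\tz^*\sigma_\phi-\sz^*\sigma_\phi$; as $\om$ plus such a form is then symplectic, that form defines a gauge transformation of $\pi_\G$ and the diffeomorphism is a symplectomorphism $(\G,\pi_\G^\tau)\to(\G,\pi_\G)$, i.e.\ a twisted Poisson automorphism with $2$-form $\tau$. The homomorphism property is bookkeeping: $\A,\A^L,\A^R,\Ad$ are group actions, and combining the intertwining relations $\tz\circ\A^L(\phi)=\A(\phi)\circ\tz$, $\sz\circ\A^R(\phi)=\A(\phi)\circ\sz$, $\tz\circ\A^R(\phi)=\tz$, $\sz\circ\A^L(\phi)=\sz$ with the cocycle identity matches each assignment to the semidirect-product law $(F_1,\tau_1)(F_2,\tau_2)=(F_1F_2,\tau_2+F_2^*\tau_1)$. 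The same computation on $M$, using $\A(\phi_1\phi_2)=\A(\phi_1)\A(\phi_2)$ and the cocycle identity, shows that $\phi\mapsto(\A(\phi),\sigma_\phi)$ is a homomorphism into $\on{Diff}(M)\ltimes\Om^2(M)$.

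It remains to see that $(\A(\phi),\sigma_\phi)$ is a twisted Poisson automorphism of $(M,\pi)$, the one point at which I must descend from $\G$ to $M$. Here $\Ad(\phi)$ is an automorphism of the Lie groupoid $\G$ covering $\A(\phi)$, and by the third pullback identity $\Ad(\phi)^*\om=\om'$ with $\om'=\om+\tz^*\sigma_\phi-\sz^*\sigma_\phi$ multiplicative and symplectic (cf.\ Proposition \ref{prop:integration}). Thus $\Ad(\phi)\colon(\G,\om')\to(\G,\om)$ is an isomorphism of symplectic groupoids, so the induced base map $\A(\phi)$ intertwines the base Poisson structures: writing $\tilde\pi=\tz_*(\om')^{-1}$ for the Poisson structure on $M$ determined by $(\G,\om')$, one gets $\A(\phi)_*\tilde\pi=\pi$. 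Finally I would identify $\tilde\pi=\pi^{\sigma_\phi}$: restricting the nondegeneracy of $\om+\tz^*\sigma_\phi$ to the units and using \eqref{eq:identification}, under which $A\G\cong T^*M$ carries the anchor $\pi^\sharp$, shows that $I+\sigma_\phi^\flat\circ\pi^\sharp$ is invertible, so $\sigma_\phi$ defines a gauge transformation of $\pi$; the compatibility $\tz_*(\pi_\G^{\tz^*\sigma_\phi})=\pi^{\sigma_\phi}$ used already in the proof of Proposition \ref{prop:integration} then gives $\tilde\pi=\pi^{\sigma_\phi}$. Hence $\A(\phi)\colon(M,\pi^{\sigma_\phi})\to(M,\pi)$ is Poisson, which is exactly the assertion that $(\A(\phi),\sigma_\phi)$ is a twisted Poisson automorphism.

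The main obstacle is precisely this last descent. The pullback identities and the homomorphism bookkeeping are formal consequences of multiplicativity, and on $\G$ the twisted-Poisson property is automatic because $\G$ is symplectic; but on $M$ neither the gauge-transformation property of $\sigma_\phi$ nor the Poisson property of $\A(\phi)$ is visible directly, and each must be transported from $\G$. I expect the cleanest route to be the symplectic-groupoid-isomorphism argument through $\Ad(\phi)$ for the Poisson property, together with a restriction-to-units computation (equivalently, the naturality of gauge transformations under the Poisson submersion $\tz$ already implicit in Proposition \ref{prop:integration}) for the gauge-transformation property. Keeping the two forms $\tz^*\sigma_\phi$ and $\sz^*\sigma_\phi$ apart, and checking that the $\sz^*$-part is invisible to $\tz_*$, is where the computation must be carried out with care.
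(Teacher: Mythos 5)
Your proposal is correct and follows essentially the same route as the paper: both derive the pullback identities \eqref{eq:transform} from the multiplicativity \eqref{eq:symplgr} of $\om$ and then descend to $M$ along the target map via the naturality $\tz_*\pi_\G^{\tz^*\sigma_\phi}=\pi^{\sigma_\phi}$. The only differences are cosmetic: you descend through $\Ad(\phi)$ as a symplectic-groupoid isomorphism together with Proposition \ref{prop:integration}, where the paper applies the single identity $\A(\phi^{-1})_*\circ\tz_*=\tz_*\circ\A^L(\phi^{-1})_*$ directly to $\pi_\G$, and you spell out the cocycle identity $\sigma_{\phi_1\phi_2}=\sigma_{\phi_2}+\A(\phi_2)^*\sigma_{\phi_1}$ and the invertibility of $I+\sigma_\phi^\flat\circ\pi^\sharp$, both of which the paper leaves implicit.
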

\begin{proof}
The claim that \eqref{eq:bisectionaction} are twisted Poisson automorphisms amounts to the following identities:
\begin{equation}\label{eq:transform}
 \A^L(\phi)^*\om=\om+\tz^*\sigma_\phi,\ \ \A^R(\phi)^*\om=\om-\sz^*\sigma_\phi,\ \ \Ad(\phi)^*\om=\om+\tz^*\sigma_\phi-\sz^*\sigma.\end{equation}
Since $\A^L(\phi)=\Mult_\G\circ (\phi\circ \tz,\ \on{id}_{\mathcal{G}})$, the multiplicativity property  \eqref{eq:symplgr} shows that
$\A^L(\phi)^*\om=\om+\tz^*\circ \phi^*\om
=\om+\tz^*\sigma_\phi$ proving the first identity in \eqref{eq:transform}. The second is obtained similarly, and 
the third follows by iteration
\[ \Ad(\phi)^*\om=\A^R(\phi)^*\A^L(\phi)^*\om=\A^R(\phi)^*(\om+\tz^*\sigma)=\om+\tz^*\sigma-\sz^*\sigma\]
and since $\tz\circ \A^R(\phi)=\tz$. 
Applying the map $\A(\phi^{-1})_*\circ \tz_*=\tz_*\circ \A^L(\phi^{-1})_*$ to $\pi_\G$ one obtains
\[
\A(\phi^{-1})_* \pi = \A(\phi^{-1})_*\circ \tz_* \pi_\G = \tz_*\circ \A^L(\phi^{-1})_* \pi_\G = \tz_* \pi_\G^{\tz^*\sig_\phi} =
\pi^{\sig_\phi}
\]
proving that $(\A(\phi),\sigma_\phi)$ is a twisted Poisson automorphism. 
%
\end{proof}

The Proposition shows that the action of $\Gamma(\G)$ on $\T M$, given by $\phi\mapsto (\A(\phi),\sigma_\phi)$, preserves 
the sub-bundle $\on{Gr}(\pi)\subset \T M$. It hence defines an action by automorphisms of the cotangent Lie algebroid 
$T^*M\cong \on{Gr}(\pi)$ (where the identification is given by $\alpha\mapsto (\pi^\sharp(\alpha),\alpha)$). On the other hand, 
we have the adjoint action of $\Gamma(\G)$ on the Lie algebroid $A\G$. The two actions 
agree:
\begin{proposition}\label{prop:agcot}
The  isomorphism $A\G\cong T^*M$ given by $\om$ intertwines the adjoint 
action of $\Gamma(\G)$ on $A\G$ with the action on the cotangent Lie algebroid.
\end{proposition}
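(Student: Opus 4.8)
The plan is to make both actions completely explicit and then check their equality by a single computation with $\om$. Write $F=\A(\phi)$ and $\sig=\sig_\phi=\phi^*\om$ as in \eqref{eq:sigphi}. On the cotangent side, the action on $\on{Gr}(\pi)\cong T^*M$ is the restriction of the automorphism of $\T M$ attached to $(F,\sig)$ via \eqref{eq:related}, as explained after Proposition \ref{prop:bisar}; unwinding that relation, a covector $\alpha\in T^*_mM$ (identified with $(\pi^\sharp\alpha,\alpha)$) is sent to $(TF^{-1})^*\big(\alpha+\iota(\pi^\sharp\alpha)\sig\big)\in T^*_{F(m)}M$. On the algebroid side, $\Ad(\phi)$ is a groupoid automorphism covering $F$ on the unit manifold $M$: one checks $\tz\circ\Ad(\phi)=F\circ\tz$, $\sz\circ\Ad(\phi)=F\circ\sz$, and $\Ad(\phi)\circ\iz=\iz\circ F$, so the differential of $\Ad(\phi)$ descends to the normal bundle $A\G=\nu(M,\G)$ covering $F$, and this is the adjoint action. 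Finally I recall from \eqref{eq:identification} that the identification $A\G\cong T^*M$ sends $a\in A_m\G$ to the covector $\alpha$ with $\alpha(v)=\om_m(\ti a,v)$ for $v\in T_mM$, where $\ti a\in T_m\G$ is any lift of $a$; this is well defined since $M$ is Lagrangian.

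The heart of the argument is computing the covector $\alpha'$ corresponding to $\Ad(\phi)\cdot a$. By definition $\alpha'(w)=\om_{F(m)}\big(T\Ad(\phi)(\ti a),w\big)$ for $w\in T_{F(m)}M$. I would write $w=TF(u)=T\Ad(\phi)(u)$ with $u=TF^{-1}(w)\in T_mM$, and use the twisted invariance $\Ad(\phi)^*\om=\om+\tz^*\sig-\sz^*\sig$ from \eqref{eq:transform}, turning $\alpha'(w)$ into $(\om+\tz^*\sig-\sz^*\sig)_m(\ti a,u)$. Evaluating the three terms at the unit $m\in M$, and using that $\tz,\sz$ restrict to the identity on $M$ so that $T\tz(u)=u=T\sz(u)$, the pull-back terms collapse to $\sig_m(T\tz\,\ti a,u)$ and $\sig_m(T\sz\,\ti a,u)$, whose difference is $\sig_m(\a(a),u)$ by the definition of the anchor $\a=T\tz-T\sz$; the first term is $\om_m(\ti a,u)=\alpha(u)$. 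Thus $\alpha'(w)=\alpha(u)+\sig_m(\a(a),u)$.

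To conclude I would invoke the fact, recorded right after \eqref{eq:identification}, that under $A\G\cong T^*M$ the anchor $\a$ becomes $\pi^\sharp$, so $\a(a)=\pi^\sharp\alpha$ and hence $\alpha'(w)=\big(\alpha+\iota(\pi^\sharp\alpha)\sig\big)(TF^{-1}w)=\big((TF^{-1})^*(\alpha+\iota(\pi^\sharp\alpha)\sig)\big)(w)$, which is precisely the image of $\alpha$ under the cotangent-algebroid action computed in the first paragraph. The point requiring care, and the step I expect to be the main obstacle, is the bookkeeping around lifts $\ti a$ of a normal vector: the individual quantities $\om_m(\ti a,u)$, $\sig_m(T\tz\,\ti a,u)$, $\sig_m(T\sz\,\ti a,u)$ each depend on the choice of lift, and one must verify that the particular combination occurring above is lift-independent. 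This is exactly why the twist $\tz^*\sig-\sz^*\sig$ (rather than either term separately) is the correct object to feed into the computation, and why it reproduces the gauge term $\iota(\pi^\sharp\alpha)\sig$ of the cotangent action.
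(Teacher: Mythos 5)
Your proof is correct and follows essentially the same route as the paper: both rest on the twisted invariance $\Ad(\phi)^*\om=\om+\tz^*\sig_\phi-\sz^*\sig_\phi$ of \eqref{eq:transform} together with the descent of $\om^\flat$ through the identification \eqref{eq:identification}, and both collapse the twist to the gauge term $\iota(\pi^\sharp\alpha)\sig_\phi$. The only difference is bookkeeping: you evaluate pointwise on arbitrary lifts $\ti{a}$ and invoke the anchor identity $\a=T\tz-T\sz\leftrightarrow\pi^\sharp$, whereas the paper runs the identical computation as an operator identity, fixing the canonical lift via the right inverse $\tz^*$ and using $T\tz\circ\pi_\G^\sharp\circ\tz^*=\pi^\sharp$ and $T\sz\circ\pi_\G^\sharp\circ\tz^*=0$.
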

\begin{proof}
Let us first derive a formula for the natural action on $T^*M$.
The image $\alpha'$ of an element $\alpha\in T^*M$ under the action of $\phi\in\Gamma(\G)$ is determined 
by $(\pi^\sharp(\alpha),\alpha)\sim_{(\A(\phi),\sigma_\phi)} (\pi^\sharp(\alpha'),\alpha')$. Hence
\[ \A(\phi)^*\alpha'=\alpha+\iota(\pi^\sharp(\alpha))\sigma_\phi=(1+\sigma_\phi^\flat\circ \pi^\sharp)(\alpha).\]
This shows that the action of $\Gamma(\G)$ on $T^*M$ is given by 
\[ \phi\mapsto (\A(\phi)^{-1})^*\circ (1+\sigma_\phi^\flat\circ \pi^\sharp).\]

Recall next that the isomorphism $\om^\flat: T\G \to T^*\G$  descends to an isomorphism 
\[ A\G=(T\G|_M)/TM \to (T^*\G|_M)/\on{ann}(TM) \cong T^*M. \]
The adjoint action of $\phi$ 
on $A\G$ is induced by $T\Ad(\phi)$ on $T\G$. We first transfer this action to $T^*\G$ using
$\om^\flat$:
\[\begin{split} \om^\flat\circ T\Ad(\phi)\circ \pi_\G^\sharp&=(\Ad(\phi)^{-1})^*\circ (\Ad(\phi)^*\om)^\flat\circ \pi_\G^\sharp\\
&=(\Ad(\phi)^{-1})^*\circ (\om+\tz^*\sigma-\sz^*\sigma)^\flat\circ \pi_\G^\sharp\\
&=(\Ad(\phi)^{-1})^*\circ \big(1+(\tz^*\sigma-\sz^*\sigma)^\flat\circ \pi_\G^\sharp\big).
\end{split}\]
To compute the induced action on $T^*M$, we compose the equation above with the 
projection $\iz^*\colon T^*\G\to T^*M$ (the kernel of this projection is exactly $\on{ann}(TM)$)
from the left and any right inverse to $\iz^*$ from the right. 
It is convenient to take the latter to be $\tz^*$.
Using $T\tz\circ \pi_\G^\sharp\circ \tz^*=\pi^\sharp$ and $T\sz\circ \pi_\G^\sharp\circ \tz^*=0$ we find
\[ \begin{split}
\iz^*\circ \om^\flat\circ T\Ad(\phi)\circ \pi_\G^\sharp\circ \tz^*&=
(\A(\phi)^{-1})^*\circ \iz^*\circ \big(1+(\tz^*\sigma_\phi-\sz^*\sigma_\phi)^\flat\circ \pi_\G^\sharp\big)\circ \tz^*\\
&=(\A(\phi)^{-1})^*\circ \big(1+ \sigma_\phi^\flat\circ (T\tz-T\sz)\circ \pi_\G^\sharp\circ \tz^*\big)\\
&=(\A(\phi)^{-1})^*\circ \big(1+ \sigma_\phi^\flat\circ\pi^\sharp\big)\end{split}\]
as desired.
\end{proof}
\begin{example}
If $K$ is a Lie group, then the cotangent bundle $T^*K$ is a symplectic groupoid integrating $\k^*$ with the Lie--Poisson structure. Using left trivialization $T^*K\cong K\times\k^*$, 
the groupoid structure is that of an action groupoid for the coadjoint action. In this case, bisections may be viewed as
maps 
$\phi\colon \k^*\to K$. (We stress that in this context, $\Gamma(T^*K)$ refers to the bisections as a 
groupoid $T^*K\rra \k^*$, rather than the sections as a bundle $T^*K\ra K$.) The symplectic form on $T^*K$ reads as 
$\om=\d \l\mu,\theta^L\r$, where $\theta^L\in\Om^1(K,\k)$ is the left-invariant Maurer-Cartan 
form, and $\mu\in \Om^0(\k^*,\k^*)$ is the identity map of $\k^*$. Hence, 
\[ \sigma_\phi=\d \l \mu,\phi^*\theta^L\r.\]
\end{example}

\begin{remark}
A bisection $\phi$ of a symplectic groupoid 
is called \emph{Lagrangian} if the corresponding submanifold $\phi(M)\subseteq\G$ is a Lagrangian submanifold, or equivalently 
$\sig_\phi=0$. As an immediate consequence of the Proposition, the Lagrangian bisections form a subgroup, and the maps $\A(\phi)$ are Poisson.  More generally, $\A(\phi)$ is a Poisson automorphism of $(M,\pi)$ if the lifted map $\Ad(\phi)$ is an 
automorphism of the symplectic groupoid $(\G,\om)$, that is, 
if $\tz^*\sig_\phi=\sz^*\sig_\phi$.  Bisections with this property form a subgroup, which is usually larger than the group of Lagrangian bisections. (Consider for example the case $\pi=0$.)
\end{remark} 

\subsection{Moser flows and bisections} \label{subsec:moser1}
Given a symplectic groupoid $\G\rra M$ integrating the Poisson manifold $(M,\pi)$, the Moser method 
has the following interpretation in terms of bisections. 
\begin{proposition}\label{prop:groupoid}
Suppose that $(\G,\om)$ is a symplectic groupoid integrating 
$(M,\pi)$, and let $\phi_t$ be a family of bisections of $\G$, 
with $\phi_0=\id$. Then there is a family of 1-forms $a_t$ satisfying 
$\d a_t=-\f{\p}{\p t} \sig_{\phi_t}$, and with the following properties:
\begin{enumerate}
\item $a_t$ is a Moser 1-form for the family of Poisson structures 
$\pi_t=(\A(\phi_t)^{-1})_*\pi$ on $M$, with $F_t=\A(\phi_t)^{-1}$ as the Moser flow. 
\item $\tz^* a_t$ is a Moser 1-form for the family of Poisson structures 
$\pi_{\G,t}=(\A^L(\phi_t)^{-1})_*\pi_\G$ on $\G$, with 
$F_{\G,t}=\A^L(\phi_t)^{-1}$ as the corresponding Moser flow. 
\end{enumerate}
Conversely, suppose $\sig_t$ is a family of closed 2-forms on $M$, with $\sigma_0=0$, defining 
gauge transformations $\pi_t=\pi^{\sig_t}$ of the Poisson structure on on $M$. Suppose $a_t$ is 
a Moser 1-form.  Then $\tz^* a_t$ are Moser 1-forms for the family of Poisson structures
$\pi_{\G,t}=\pi^{\tz^*\sig_t}_\G$ on $\G$. If the corresponding Moser vector field is complete, defining a flow 
$F_{\G,t}$, then $\phi_t=(F_{\G,t})^{-1}\circ \iz$ defines bisections with 
\[ \sig_{\phi_t}=\sig_t.\]
\end{proposition}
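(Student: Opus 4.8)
The plan is to run the Moser machinery of Section~\ref{subsec:moser} on the symplectic groupoid, where the Poisson structure is non-degenerate, and then descend to $M$ along $\tz$; the main tool is Proposition~\ref{prop:bisar}. For the given family $\phi_t$ (with $\phi_0=\iz$) I set $\sig_t:=\sig_{\phi_t}=\phi_t^*\om$. Since $M\subset\G$ is Lagrangian, $\sig_0=\iz^*\om=0$, and by Proposition~\ref{prop:bisar} each $\sig_t$ defines a gauge transformation with $\pi^{\sig_t}=\A(\phi_t^{-1})_*\pi=(\A(\phi_t)^{-1})_*\pi=\pi_t$. Writing $X_t=\f{\p}{\p t}\phi_t$ for the velocity of $\phi_t$ viewed as a section $M\to\G$, and using that $\om$ is closed, the variational formula for the pullback of a form along a time-dependent map yields $\f{\p}{\p t}\sig_t=\d\big(\phi_t^*\iota(X_t)\om\big)$. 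Thus $a_t:=-\phi_t^*\iota(X_t)\om$ satisfies $\d a_t=-\f{\p}{\p t}\sig_{\phi_t}$, producing the asserted $1$-form.

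\emph{Property (b).} Put $G_t:=\A^L(\phi_t)$, with $G_0=\id$ and $G_t\circ\iz=\phi_t$, and let $u_t$ be its generating vector field (so $\f{\p}{\p t}G_t=u_t\circ G_t$). By \eqref{eq:transform}, $G_t^*\om=\om+\tz^*\sig_t$. The key step is that $\beta_t:=G_t^*\iota(u_t)\om$ is $\tz$-basic. Indeed, $\sz\circ G_t=\sz$ forces $T\sz\circ u_t=0$, so $u_t\in\ker T\sz=(\ker T\tz)^\om$; since $\tz\circ G_t=\A(\phi_t)\circ\tz$ shows $(G_t)_*$ preserves $\ker T\tz$, one gets $\iota(Y)\beta_t=G_t^*\,\om\big(u_t,(G_t)_*Y\big)=0$ for $Y\in\ker T\tz$, while $\d\beta_t=\f{\p}{\p t}(G_t^*\om)=\tz^*\f{\p}{\p t}\sig_t$ is itself $\tz$-basic. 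Hence $\beta_t=\tz^*\iz^*\beta_t=-\tz^*a_t$, so $\tz^*a_t$ is a Moser $1$-form for $\pi_{\G,t}=\pi_\G^{\tz^*\sig_t}$ (Proposition~\ref{prop:integration}), its Moser vector field equals $(G_t^{-1})_*u_t$, and in the paper's flow convention this integrates exactly to $G_t^{-1}=\A^L(\phi_t)^{-1}$, which is (b).

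\emph{Property (a) and the converse.} For (a) I project along $\tz$: as $\tz\colon(\G,\pi_{\G,t})\to(M,\pi_t)$ is Poisson and $\tz\circ G_t=\A(\phi_t)\circ\tz$, the Moser vector field on $\G$ is $\tz$-related to $v_t=-\pi_t^\sharp(a_t)$ and $\A^L(\phi_t)^{-1}$ descends to $\A(\phi_t)^{-1}$, which is therefore the Moser flow of $a_t$ on $M$. For the converse, $\f{\p}{\p t}(\tz^*\sig_t)=-\d(\tz^*a_t)$ is immediate, so $\tz^*a_t$ is a Moser $1$-form on $\G$; its Moser vector field lies in $\ker T\sz$ because $(\ker T\tz)^{\om+\tz^*\sig_t}=(\ker T\tz)^\om=\ker T\sz$, whence its flow $F_{\G,t}$ preserves $\sz$-fibers and $\phi_t:=F_{\G,t}^{-1}\circ\iz$ is a bisection. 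Finally $\sig_{\phi_t}=\iz^*(F_{\G,t}^{-1})^*\om=\iz^*(\om+\tz^*\sig_t)=\sig_t$, using $(F_{\G,t}^{-1})^*\om=\om+\tz^*\sig_t$ and $\tz\circ\iz=\id$.

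\emph{Main obstacle.} The crux is the $\tz$-basicness of $\beta_t$: it is exactly what pins the Moser $1$-form on $\G$ down to $\tz^*a_t$ (rather than merely a form cohomologous to it) and thereby forces the Moser flow to coincide with $\A^L(\phi_t)^{-1}$ on the nose. Its proof relies on the symplectic orthogonality $\ker(T\tz)^\om=\ker(T\sz)$ combined with $u_t\in\ker T\sz$. A secondary source of difficulty is bookkeeping the paper's flow convention, under which the flow of $v_t$ is the ordinary flow of $-v_t$; every identification of a generating vector field with a Moser vector field must be checked against it.
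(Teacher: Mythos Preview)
Your proof is correct and follows essentially the same route as the paper: run the Moser argument on the symplectic groupoid, show the resulting $1$-form is $\tz$-basic using the orthogonality $\ker(T\tz)^\om=\ker(T\sz)$, and descend along $\tz$. The only minor variation is in the invariance half of the basicness claim---you infer it from the fact that $\d\beta_t$ is already basic (via Cartan's formula, implicitly assuming connected $\tz$-fibers), whereas the paper checks $\A^R(\psi)$-invariance of $\wt{a}_t$ directly for all (local) bisections $\psi$; in the converse the paper likewise uses $\A^R$-invariance of the Moser vector field to identify $F_{\G,t}^{-1}$ with $\A^L(\phi_t)$, which immediately yields that $\phi_t$ is a bisection.
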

\begin{proof}
Given $\phi_t$, let $\ti{v}_t$ be the time dependent vector field generating the flow
$t\mapsto\A^L(\phi_t)^{-1}$, and put
\begin{equation}  \label{eq:omega+sigma}
\om_t=\A^L(\phi_t)^*\om=\om+\tz^*\sigma_{\phi_t}.
\end{equation}
Taking the $t$-derivative of the equation $\om=(\A^L(\phi_t)^{-1})^*\om_t$, we find 
\begin{equation}\label{eq:find}
\f{\p}{\p t}\om_t=\d \iota(\ti{v}_t)\om_t=-\d \wt{a}_t.\end{equation}
where 
\begin{equation}\label{eq:wtat}
 \wt{a_t}=-\iota(\ti{v}_t) \om_t\in \Om^1(\G).\end{equation}
%
We claim that $\wt{a}_t$ is $\tz$-basic, so that 
$\wt{a}_t=\tz^* a_t$ for a family of 1-forms $a_t$ on $M$. Equation \eqref{eq:wtat} then shows that 
$\tz^*a_t$ is a Moser 1-form on $\G$ for the family of Poisson structures $\pi_{\G,t}$, 
with $\A^L(\phi_t)^{-1}$ as the corresponding Moser flow. Equation \eqref{eq:find} and 
$\f{\p}{\p t}\om_t=\tz^*\f{\p}{\p t}\sigma_{\phi_t}$ show that 
$\d a_t=-\f{\p}{\p t} \sig_{\phi_t}$. 

To prove the claim note that $\ti{v}_t$ is tangent to $\sz$-fibers, since $\A^L$ preserves the $\sz$-fibers.
Recall that the tangent spaces to $\sz$-fibers and $\tz$-fibers are $\om$-orthogonal. Since $\A^L$ acts by automorphisms 
of both the $\tz$-fibration and the $\sz$-fibration, the tangent spaces to $\sz$-fibers and $\tz$-fibers are also 
$\om_t$-orthogonal. It follows that $\wt{a}_t=-\iota(\ti{v}_t)\om_t$ vanishes on vectors tangent to $\tz$-fibers, i.e., it is $\tz$-horizontal. 
On the other hand, if $\psi$ is a bisection, 
\[ \begin{split}
\A^R(\psi)^*\wt{a_t}&=
- \iota(\ti{v}_t)\A^L(\phi_t)^*\A^R(\psi)^*\om\\
&= - \iota(\ti{v}_t)\A^L(\phi_t)^*(\om-\sz^*\sig_\psi)\\
&=\wt{a_t}-\iota(\ti{v}_t)\sz^*\A(\phi_t)^*\om=\wt{a_t}.
\end{split}\]
This proves the claim,  and hence completes the proof of part (b).
Letting $v_t$ be the time dependent vector field generating $t\mapsto \A(\phi_t)^{-1}$, we have $\ti{v}_t\sim_{\tz} v_t$, hence $v_t=-\pi_t^\sharp(a_t)$, which proves part (a). Note $\phi_t=\A^L(\phi_t)\circ \iz=(F_{\G,t})^{-1}\circ \iz$.

Suppose conversely that $\sig_t$ and $a_t$ are given, as in the second part of the proposition. Then $\tz^*\sig_t$ defines 
gauge transformations $\pi_{\G,t}$, with Moser 1-forms $\wt{a}_t=\tz^*a_t$. Its Moser vector field $\wt{v}_t$ 
is $\tz$-related to the Moser vector field $v_t$ of $a_t$; hence 
its flow $\wt{F}_t$ satisfies $\tz\circ \ti{F}_t=F_t\circ \tz$. Since $\wt{v}_t$ is $\A^R$-invariant, the flow $\wt{F}_t$ commutes with $\A^R$. It follows that $\wt{F}_t=\A^L(\phi_t)^{-1}$ 
where $\phi_t=\ti{F}_t^{-1}\circ \iz$. Since $\A(\phi_t)^*\om=\om+\tz^*\sig_{\phi_t}$, and since $\tz$ is a submersion, we have $\sig_t=\sig_{\phi_t}$.
\end{proof}

\begin{remark}
By a similar discussion,  $-\sz^*a_t$ is a Moser 1-form on $\G$ for the family of Poisson structures 
$\A^R(\phi_t)^{-1}_*\pi_\G$, with $\A^R(\phi_t)^{-1}$ as the corresponding Moser flow. 
The corresponding Moser vector field commutes with that for $\tz^*a_t$, since the flows
(given in terms of the actions $\A^L$ and $\A^R$) commute. This implies that 
$\tz^*a_t-\sz^* a_t$ is again a Moser 1-form, for the family of Poisson structures 
$\Ad(\phi_t)^{-1}_*\pi_\G$ with $\Ad(\phi_t)$ as the Moser flow.
\end{remark}

Proposition \ref{prop:groupoid} extends to the equivariant case: Consider the setting from the end of 
Section \ref{subsec:moser}; in particular $ e_M(\xi)\sim_{(\Phi_t,\sig_t)}e_{\g^*}(\xi)$ and the Moser 1-forms $a_t$ satisfy 
\eqref{eq:ateqn}. Then the bisections $\phi_t$ satisfy 
\begin{equation}\label{eq:withmommap}
 (\Phi_t,\sig_t)\circ (\A(\phi_t),\sig_{\phi_t})^{-1}=(\Phi_0,0).
\end{equation}
Indeed, this identity is equivalent to the two conditions $\sigma_t=\sigma_{\phi_t}$ and $F_t^*\Phi_t=(\A(\phi_t)^{-1})^*\Phi_t=\Phi_0$.
%
\begin{remark}
It is clear that this proposition, and its proof, has a local counterpart (working with local groupoids, local flows and so on), 
as well as a version for germs. 
\end{remark}

\begin{remark}
Use the symplectic form $\om$ on $\G$ to identify $A\G\cong T^*M$.  
Informally, the group $\Gamma(\G)$ of bisections may be viewed as the infinite-dimensional Lie group 
integrating the Lie algebra $\Gamma(A\G)\cong \Gamma(T^*M)$. From this perspective, the time dependent bisection $\phi_t\in\Gamma(\G)$ is 
the integration of the time dependent section $a_t\in\Gamma(T^*M)$.
\end{remark}

\subsection{Linearization}\label{subsec:lin}
Given a manifold $M$, we denote by $C^\infty(M)_x$ the algebra of germs at $x$ of smooth functions. Thus 
$C^\infty(M)_x=\on{colim}_U C^\infty(U)$, where $U$ ranges over open neighborhoods of $x$. Similarly, one defines
the space $C^\infty(M,N)_x$ of germs at $x$ of smooth functions to
another manifold $N$, germs of sections of fiber bundles, and 
so on. 
Given $F\in C^\infty(M,N)_x$, we will write $F\colon M_x \to N_y$ if $F(x)=y$. 
Suppose $\pi_M$ is a germ of Poisson structure at $x\in M$, given by a Poisson bracket on the algebra $C^\infty(M)_x$.  
Given another germ of a Poisson structure $\pi_N$ at $y\in N$, 
we will say that $F\colon M_x\to N_y$ is Poisson if the map $F^*\colon C^\infty(N)_y\to C^\infty(M)_x$ preserves brackets. 
If a Poisson structure $\pi_M$ vanishes at $x\in M$, then the tangent space $T_xM$ acquires a linear Poisson structure.

\begin{definition}
 Let $(M,\pi_M)$ be a Poisson manifold, and $x\in M$ a zero of $\pi_M$. Then $M$ is called \emph{linearizable at $x$}
if there exists a germ of a Poisson diffeomorphism
\begin{equation}\label{eq:linearization}
 F\colon (T_xM)_0\to M_{x},
\end{equation}
with $T_0F$ the identity transformation of $T_xM$.
We will refer to $F$ as a 
\emph{Poisson linearization}.
\end{definition}
\begin{remark}
\begin{enumerate}
\item
There are similar definitions in the \emph{formal} category, working 
with infinite jets of functions rather than germs, and in the 
\emph{analytic} category, requiring that the given data are analytic and working with germs of analytic functions.  
\item
There are analogous notions of linearizations of Lie algebroids, 
with the Poisson case corresponding to the cotangent Lie algebroid. 
See Fernandes-Monnier \cite{fer:lin} for a survey and some recent results. 
\end{enumerate}
\end{remark}

Suppose $\pi_M$ vanishes at $x$, and that we have found a germ of a $\sigma$-twisted Poisson linearization 
\[ (F,\sigma)\colon (T_xM)_0\to M_x.\] 
Recall that $T_xM=\k^*$ with the Lie--Poisson structure, 
where $\k$ is the isotropy Lie algebra at $x$. Let $s_t\colon \k^*\to \k^*$ be scalar multiplication by $t$, 
and define a family of closed 2-forms $\sigma_t=t^{-1}s_t^*\sigma_t$. Let $a_t$ be a family of  primitives 
of $-\f{\p}{\p t}{\sigma}_t$ (e.g., given by the de Rham homotopy operator for $\Om(\k^*)$). The Moser method 
determines a family of bisections $\phi_t$ such that $\phi_t(0)=e$ and $\sig_{\phi_t}=\sig_t$. Put $\phi=\phi_1$, so that 
$\sig_\phi=\sigma$. Then 
\[ (F,\sigma)\circ (\A(\phi),\sigma_\phi)^{-1}=(F\circ \A(\phi)^{-1},0),\] 
so that $F\circ \A(\phi)^{-1}$ is the desired Poisson linearization.

\subsection{Coisotropic submanifolds}

Recall that a submanifold $C\subset M$ of a Poisson manifold is 
\emph{coisotropic} if $\pi^\sharp(\on{ann}(TC))\subset TC$. Equivalently, the conormal bundle $\on{ann}(TC)\subset T^*M$ is a 
Lie subalgebroid. At least locally, this Lagrangian Lie subalgebroid 
integrates to a Lagrangian Lie subgroupoid $\L\subset \G$; see Cattaneo \cite{cat:int} and Xu \cite{xu:poigr}. Conversely, the base of any Lagrangian Lie subgroupoid $\L\rra C$ of $\G\rra M$ is a coisotropic submanifold. 
\begin{proposition}\label{prop:coisotropic}
Suppose that $C\subset M$ is a coisotropic submanifold of the Poisson manifold $(M,\pi)$, and 
that the Moser 1-forms $a_t$ (hence also the 2-forms $\sig_t$) pull back to $0$ on $C$. Then the Moser vector field $v_t$ is tangent to $C$. Furthermore, if $C\subset M$ integrates to a (local) Lagrangian subgroupoid of the (local) symplectic groupoid $\G$, then the
bisections $\phi_t\in \Gamma(\G)$ obtained from $a_t$ restrict to bisections of the Lagrangian Lie subgroupoid $\L$. 
\end{proposition}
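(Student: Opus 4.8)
The plan is to treat the two assertions in turn, deriving the first by a pointwise linear-algebra argument and the second from the groupoid picture of the Moser flow.

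For the tangency of $v_t=-\pi_t^\sharp(a_t)$, where $\pi_t=\pi^{\sigma_t}$, I would first show that $C$ remains coisotropic for $\pi_t$. Using $\pi_t^\sharp=\pi^\sharp\circ A^{-1}$ with $A=I+\sigma_t^\flat\circ\pi^\sharp$ (equation \eqref{eq:bunmaps}), the claim reduces to showing that $A$ preserves the conormal space $\on{ann}(T_xC)$ at each $x\in C$. For $\beta\in\on{ann}(T_xC)$ and $w\in T_xC$ one has $A(\beta)(w)=\beta(w)+\sigma_t(\pi^\sharp\beta,w)$; the first term vanishes since $\beta$ annihilates $T_xC$, and coisotropy of $\pi$ gives $\pi^\sharp\beta\in T_xC$, so the second term is the value of $\sigma_t$ on two vectors tangent to $C$ and vanishes because $\sigma_t$ pulls back to $0$ on $C$. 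As $A$ is invertible this yields $A(\on{ann}(T_xC))=\on{ann}(T_xC)$, hence $\pi_t^\sharp(\on{ann}(T_xC))=\pi^\sharp(\on{ann}(T_xC))\subseteq T_xC$, i.e. $C$ is coisotropic for $\pi_t$. Since $a_t$ pulls back to $0$ on $C$ we have $(a_t)_x\in\on{ann}(T_xC)$, and therefore $v_t(x)=-\pi_t^\sharp((a_t)_x)\in T_xC$.

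For the second assertion, recall from Proposition \ref{prop:groupoid} that $\phi_t=F_{\G,t}^{-1}\circ\iz$, where $F_{\G,t}$ is the flow of the Moser vector field $\tilde v_t=-\pi_{\G,t}^\sharp(\tz^*a_t)$ on $\G$ for the gauge-transformed structure $\pi_{\G,t}=\pi_\G^{\tz^*\sigma_t}$, whose symplectic form is $\om_t=\om+\tz^*\sigma_t$. Since $\iz(C)\subseteq\L$ and a flow preserves any submanifold to which its generator is tangent, it suffices to prove that $\tilde v_t$ is tangent to $\L$. The key point is that $\L$ stays Lagrangian for $\om_t$: because $\tz$ maps $\L$ into $C$, one has $(\tz^*\sigma_t)|_\L=(\tz|_\L)^*(\sigma_t|_C)=0$, so $\om_t|_\L=\om|_\L=0$; as $\L$ is half-dimensional and $\om_t$ is symplectic (Proposition \ref{prop:integration}), $\L$ is Lagrangian for $\om_t$, i.e. $(T\L)^{\om_t}=T\L$. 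From $\pi_{\G,t}^\sharp=(\om_t^\flat)^{-1}$ we get $\iota(\tilde v_t)\om_t=-\tz^*a_t$, so for $w\in T_\gamma\L$, $\om_t(\tilde v_t,w)=-a_t(T\tz(w))=0$, since $T\tz(w)\in T_{\tz(\gamma)}C$ and $a_t$ pulls back to $0$ on $C$. Hence $\tilde v_t(\gamma)\in(T_\gamma\L)^{\om_t}=T_\gamma\L$.

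It then follows that $F_{\G,t}^{-1}(\L)=\L$, so $\phi_t(C)=F_{\G,t}^{-1}(\iz(C))\subseteq\L$. Restricting to $C$, the relation $\sz\circ\phi_t=\id$ gives $\sz\circ\phi_t|_C=\id_C$, while $\tz\circ\phi_t|_C=\A(\phi_t)|_C$ is a diffeomorphism of $C$ because $\A(\phi_t)=F_t^{-1}$ and $v_t$ is tangent to $C$ by the first part; this exhibits $\phi_t|_C$ as a bisection of $\L\rra C$. I expect the main obstacle to be precisely the two stability statements — coisotropy of $\pi_t$ and Lagrangianity of $\L$ for $\om_t$ — since both rely essentially on the hypothesis that $\sigma_t$ (equivalently $a_t$) pulls back to zero on $C$; once they are established the tangency and restriction arguments are formal. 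Throughout, in the local or germ setting one works with the local symplectic groupoid and local flows, as in the preceding remarks.
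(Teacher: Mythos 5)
Your proof is correct and follows essentially the same route as the paper's: coisotropy of $C$ for $\pi_t$ via the action of $1+\sigma_t^\flat\circ\pi^\sharp$ on $\on{ann}(TC)$, Lagrangianity of $\L$ for $\om_t=\om+\tz^*\sigma_t$, and tangency of $\tilde v_t$ to $\L$ from $\om_t(\tilde v_t,\cdot)=-\tz^*a_t$ vanishing on $T\L$. Your version is in fact slightly more careful than the paper's in two spots — you only claim (and prove) that $1+\sigma_t^\flat\circ\pi^\sharp$ \emph{preserves} $\on{ann}(TC)$ rather than restricting to the identity there, and you verify explicitly that $\phi_t|_C$ is a bisection of $\L\rra C$ using the first part.
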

\begin{proof}
By assumption, $a_t$ restricts to a section of the conormal bundle $\on{ann}(TC)$. Since the 
bundle automorphism  $1+\sig_t^\flat\,\pi^\sharp$ of $T^*M$ 
restricts to identity on $\on{ann}(TC)$, we see that 
$C$ is also coisotropic with respect to 
$\pi_t$. We claim that $\L$ is also Lagrangian with respect to 
$\om_t=\om+\tz^*\sigma_t$. Indeed, if $Y_1,Y_2\in T_x\L$, then their projections under $T\t$ are tangent to $C$, hence 
$(\tz^*\sigma_t)(Y_1,Y_2)=0$. Let $\ti{v}_t$ be the vector field 
defined by $\iota(\ti{v}_t)\omega_t=-\tz^* a_t$. For all $Y\in T_x\L$ we have 
\[ \om_t(\ti{v}_t|_x,Y)
=-\langle (T_x\tz)^* a_t|_x, Y\rangle
=- \langle a_t|_x,\ (T_x\tz)(Y)\rangle=0\]
since $(T_x\tz)(Y)\in T_xC$. This shows that $X_t$ is 
$\om_t$-orthogonal to the tangent spaces of $\L$, hence it is itself tangent to $\L$ and
its flow $\wt{F}_t$ preserves 
$\L$. We conclude that the bisections $\phi_t=\wt{F}_t^{-1}\circ \iz$ 
restrict to bisections of $\L$ over $C$. 
\end{proof}
This result implies the following functorial aspects of Moser flows.
Recall that a map $f\colon M_1\to M_2$ between Poisson manifolds
is Poisson if and only if its graph is a coisotropic submanifold 
\[ C=\on{Gr}(f)\subseteq M=M_2\times \ol{M_1},\] 
where $\ol{M_1}$ signifies $M_1$ with the opposite Poisson structure $-\pi_1$. Hence, if $\G_i$ are the (local) symplectic groupoids integrating $M_i$, we obtain a (local) Lagrangian subgroupoid 
$\L\subset \G_2\times\ol{\G_1}$ over $C$. This $\L$ is the graph of a \emph{co}morphism of Lie groupoids \cite{hig:dua}; 
it integrates the Lie algebroid comorphism between $T^*M_1$ and $T^*M_2$. (See Cattaneo-Dherin-Weinstein \cite{cat:int1} for a general discussion of 
the integration of comorphisms.) In particular we have a 
pull-back map of (local) bisections, integrating the pull-back map for 1-forms. Applying Proposition \ref{prop:coisotropic} to this situation, we obtain: 
\begin{proposition} \label{prop:twobisections}
Suppose $f\colon (M_1,\pi_1)\to (M_2,\pi_2)$ 
is a Poisson map, and let $a_{i,t}$ be families of 1-forms on 
$M_i$, defining Moser vector fields $v_{i,t}$. If 
$a_{1,t}=f^* a_{2,t}$, then $v_{1,t}\sim_f v_{2,t}$. 
Hence, $f$ intertwines the Moser flows. Furthermore, if 
$\G_i$ are the (local) symplectic groupoids integrating $M_i$, 
then the (local) bisections $\phi_{i,t}\in\Gamma(\G_i)$ are related by pull-back.  
\end{proposition}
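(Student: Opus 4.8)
The plan is to prove Proposition \ref{prop:twobisections} by applying Proposition \ref{prop:coisotropic} to the coisotropic graph $C=\on{Gr}(f)$ sitting inside $M=M_2\times\ol{M_1}$, where $\ol{M_1}$ carries the Poisson structure $-\pi_1$. The product manifold $M$ has Poisson structure $\pi=\pi_2\oplus(-\pi_1)$, and is integrated by the (local) symplectic groupoid $\G=\G_2\times\ol{\G_1}$, with $\L\subset\G$ the Lagrangian Lie subgroupoid over $C$ discussed just before the statement. The first step is to assemble the individual Moser data $a_{i,t}$ into a single Moser $1$-form on $M$, namely $a_t=\pr_2^*a_{2,t}\ominus\pr_1^*a_{1,t}$ (the sign coming from the reversed Poisson structure on the first factor), with corresponding $2$-forms $\sig_t=\pr_2^*\sig_{2,t}-\pr_1^*\sig_{1,t}$. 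The Moser vector field $v_t$ on $M$ is then the pair $(v_{2,t},v_{1,t})$, since $\pi^\sharp$ splits as a direct sum and the sign on the first factor cancels against the sign built into $a_t$.

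Next I would verify the hypothesis of Proposition \ref{prop:coisotropic}, that $a_t$ (and hence $\sig_t$) pulls back to $0$ on $C=\on{Gr}(f)$. This is exactly where the assumption $a_{1,t}=f^*a_{2,t}$ is used: the inclusion $C\hookrightarrow M$ is, up to the identification of $C$ with $M_1$ via the graph, the map $x\mapsto(f(x),x)$, so the pullback of $a_t$ to $C$ is $f^*a_{2,t}-a_{1,t}$, which vanishes by hypothesis. Having checked this, Proposition \ref{prop:coisotropic} immediately gives that the Moser vector field $v_t=(v_{2,t},v_{1,t})$ is tangent to $C$. Tangency of $(v_{2,t},v_{1,t})$ to the graph of $f$ is precisely the relation $v_{1,t}\sim_f v_{2,t}$, i.e. $Tf(v_{1,t})=v_{2,t}$ along $\on{Gr}(f)$; consequently the flows are intertwined, $f\circ F_{1,t}=F_{2,t}\circ f$, which is the first assertion.

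For the statement about bisections, I would again invoke the second conclusion of Proposition \ref{prop:coisotropic}: since $C$ integrates to the (local) Lagrangian subgroupoid $\L\subset\G$, the bisections $\phi_t\in\Gamma(\G)$ produced by $a_t$ restrict to bisections of $\L$. Under the identification of $\L$ with the graph of the Lie groupoid comorphism integrating the Lie algebroid comorphism between $T^*M_1$ and $T^*M_2$, a bisection of $\L$ is exactly a compatible pair $(\phi_{2,t},\phi_{1,t})$ related by the pull-back map on (local) bisections discussed above. Because the bisections are determined by the Moser data through the construction of Proposition \ref{prop:groupoid}, the component bisections are precisely the $\phi_{i,t}$ obtained from $a_{i,t}$ on each factor, and their compatibility as a bisection of $\L$ is the assertion that they are related by pull-back.

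The main obstacle is bookkeeping the sign conventions and the various identifications rather than any genuine difficulty: one must confirm that the opposite Poisson structure on $M_1$ forces the sign $\ominus\pr_1^*a_{1,t}$ in $a_t$ so that the resulting Moser vector field has both components pointing the right way, and one must match the abstract notion of a bisection of $\L$ with the concrete pull-back relation of the component bisections via the comorphism. Both of these are consequences of the groupoid formalism already set up in Sections \ref{subsec:bis}--\ref{subsec:moser1} together with the coisotropic reduction of Proposition \ref{prop:coisotropic}, so once the pieces are aligned the proof is essentially a direct application.
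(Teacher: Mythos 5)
Your proposal is correct and follows exactly the paper's proof: the authors likewise prove Proposition \ref{prop:twobisections} as a direct application of Proposition \ref{prop:coisotropic} to $M=M_2\times\ol{M_1}$ with $a_t=(a_{2,t},-a_{1,t})$, the hypothesis $a_{1,t}=f^*a_{2,t}$ ensuring the pullback to $\on{Gr}(f)$ vanishes. Your additional sign bookkeeping (the minus on $\pi_1$ cancelling the minus on $a_{1,t}$, so the Moser vector field on $M$ is $(v_{2,t},v_{1,t})$) and the identification of bisections of $\L$ with pull-back-related pairs are exactly the details the paper leaves implicit.
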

\begin{proof}
This is a direct application of Proposition \ref{prop:coisotropic}
to $M=M_2\times \ol{M_1}$, with $a_t=(a_{2,t},-a_{1,t})$. 
\end{proof}

\section{Coboundary Poisson Lie groups}\label{sec:cob}
We review some aspects of the theory of Lie bialgebras and Poisson Lie
groups, due to Drinfeld \cite{dr:ha,dr:qu,dr:quas}.

\subsection{Lie bialgebras}
A \emph{Lie bialgebra} is a Lie algebra $\g$ together
with a linear map $\lambda\colon \g\to \g\otimes\g$ such
that 
\begin{enumerate}
\item[(i)] the map $\lambda$ defines a Lie coalgebra structure (i.e., it
defines a Lie bracket on $\g^*$), and 
\item[(ii)] $\lambda$ is a Lie algebra 1-cocycle with coefficients in the $\g$-module $\g\otimes\g$, i.e., 
\[ \lambda([\xi,\eta])
=\ad_\xi\lambda(\eta)-\ad_\eta\lambda(\xi)\] 
for all $\xi,\eta\in\g$.  
\end{enumerate}
The
structure of a Lie bialgebra is equivalent to that of a \emph{Manin
triple} $(\dd,\g,\h)$, given by a Lie algebra $\dd$ with a
non-degenerate invariant metric $\l\cdot,\cdot\r$ and a pair of
transverse Lagrangian Lie subalgebras $\g,\h$. The pairing identifies
$\g^*\cong \h$, and $\g$ acquires a Lie bialgebra structure, with
cobracket dual to the Lie bracket on $\h$. The Lie algebra $\dd$ is
called the \emph{double} of the Lie bialgebra $\g$. In the special case 
$\lambda=0$, the double is the  semi-direct product $\dd=\g\ltimes\g^*$ with respect to the 
coadjoint action, with $\h=\g^*$.  

A Lie bialgebra $\g$ is called a \emph{coboundary Lie
bialgebra} if $\lambda$ is the coboundary of some element $r\in
\g\otimes\g$, that is, 
\[ \lambda(\xi)=\ad_\xi r\]
for all $\xi\in \g$. The choice of $r$ for a given $\lambda$ will be referred to as a \emph{coboundary structure} for the Lie bialgebra $\g$.   
For any $r\in \g\otimes\g$, let $r^\sharp\colon \g^*\to \g$ denote the map $r^\sharp(\mu)=r(\mu,\cdot)$.

\begin{lemma}[Drinfeld \cite{dr:quas}] 
Let $\g$ be a Lie bialgebra, with double $\dd=\g\oplus \g^*$. There is a 1--1 correspondence 
between 
\begin{itemize}
\item
coboundary structures $r\in\g\otimes\g$, 
\item $\g$-equivariant splittings 
$j \colon \g^* \to \dd$
of the sequence
\[ 0\to \g\to \dd\to \g^*\to 0.\]
\end{itemize}
Under this correspondence, $j(\mu)= \mu - r^\sharp(\mu)$ 
for $\mu\in\g^*$.  
\end{lemma}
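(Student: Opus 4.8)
The plan is to reduce the statement to a term-by-term comparison: first set up the bijection purely linear-algebraically, then match $\g$-equivariance of the splitting with the coboundary condition. I would begin by observing that $0\to\g\to\dd\to\g^*\to0$ is a sequence of $\g$-modules, with $\g$ acting on $\dd$ by $\ad$ and on the quotient $\dd/\g\cong\g^*$ by the induced action. Since $\dd=\g\oplus\h$ as a vector space with $\h\cong\g^*$ Lagrangian, the projection $\dd\to\g^*$ is identified with projection along $\g$ onto $\h$, the inclusion $\h\hookrightarrow\dd\to\dd/\g\cong\g^*$ being the identity. Hence every vector-space splitting has the form $j(\mu)=\mu+A(\mu)$ with $\mu\in\h$ and $A\in\Hom(\g^*,\g)$; writing $A=-r^\sharp$ identifies the set of splittings with $\Hom(\g^*,\g)\cong\g\otimes\g$, which is the asserted bijection $r\leftrightarrow j$, $j(\mu)=\mu-r^\sharp(\mu)$. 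It then remains only to check that $j$ is $\g$-equivariant exactly when $r$ is a coboundary structure.

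Next I would compute the $\g$-module structure on $\dd/\g$ explicitly. Using invariance of the metric together with the Lagrangian conditions on $\g,\h$, the $\h$-component of $[\xi,\mu]$ (for $\xi\in\g$, $\mu\in\h\cong\g^*$) equals $\ad^*(\xi)\mu$, so the induced action on $\dd/\g\cong\g^*$ is the coadjoint action. The equivariance condition for $j$ then reads $j(\ad^*(\xi)\mu)=[\xi,j(\mu)]$ for all $\xi\in\g$, $\mu\in\g^*$. Because $j$ is a splitting and the projection $\dd\to\g^*$ is $\g$-equivariant, the $\g^*$-components of the two sides are both $\ad^*(\xi)\mu$ automatically; thus equivariance is equivalent to equality of the $\g$-components alone.

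The core computation is to extract these $\g$-components using the bracket of the double, $[\xi,\mu]=\ad^*(\xi)\mu-\ad^*(\mu)\xi$ (with $\ad^*(\xi)\mu\in\g^*$ and $\ad^*(\mu)\xi\in\g$), together with $[\xi,r^\sharp(\mu)]=\ad_\xi r^\sharp(\mu)\in\g$. The $\g$-component of the left-hand side is $-r^\sharp(\ad^*(\xi)\mu)$, and that of the right-hand side is $-\ad^*(\mu)\xi-\ad_\xi r^\sharp(\mu)$, so equivariance becomes
\[ \ad_\xi r^\sharp(\mu)-r^\sharp(\ad^*(\xi)\mu)=-\ad^*(\mu)\xi.\]
I would then recognize both sides in terms of $r$ and $\lambda$: the left-hand side is exactly $(\ad_\xi r)^\sharp(\mu)$, since the tensor action $\ad_\xi$ on $\g\otimes\g$ transports to $\ad_\xi\circ r^\sharp-r^\sharp\circ\ad^*(\xi)$ on the corresponding maps; and the right-hand side is $\lambda(\xi)^\sharp(\mu)$, since pairing $-\ad^*(\mu)\xi$ against $\nu\in\g^*$ recovers $[\mu,\nu]_{\g^*}(\xi)=\langle\lambda(\xi),\mu\otimes\nu\rangle$. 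Hence the equivariance condition is $(\ad_\xi r)^\sharp=\lambda(\xi)^\sharp$ for all $\xi$, i.e.\ $\ad_\xi r=\lambda(\xi)$, which is precisely the coboundary condition.

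The argument has no genuine obstacle beyond careful bookkeeping; the one substantive input is the identity $\lambda(\xi)^\sharp(\mu)=-\ad^*(\mu)\xi$ relating the cobracket to the coadjoint action of $\g^*$ on $\g$ inside the double, which is what connects the geometric splitting to the algebraic coboundary condition. I expect the main annoyance to be keeping the two coadjoint actions straight throughout, namely $\ad^*$ of $\g$ on $\g^*$ versus $\ad^*$ of $\g^*$ on $\g$, along with their signs.
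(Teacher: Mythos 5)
Your proof is correct and takes essentially the same route as the paper: both reduce $\g$-equivariance of $j$ to the vanishing of the $\g$-component of $[\xi,j(\mu)]_\dd - j(\ad^*_\xi\mu)$, computed via the mixed bracket $[\xi,\mu]_\dd=\ad^*_\xi\mu-\ad^*_\mu\xi$ of the double, and identify the resulting condition with $\lambda(\xi)=\ad_\xi r$. The only cosmetic difference is that you phrase the final identification as the operator identities $(\ad_\xi r)^\sharp=\ad_\xi\circ r^\sharp-r^\sharp\circ\ad^*_\xi$ and $\lambda(\xi)^\sharp(\mu)=-\ad^*_\mu\xi$, whereas the paper contracts the $\g$-valued defect against a second covector $\nu\in\g^*$ to reach $\l\mu\otimes\nu,\lambda(\xi)-\ad_\xi r\r$.
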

\begin{proof}
Let $r\in\g\otimes\g$ and $j\colon\g^*\to \dd$ be related 
as above. The map $j$ is $\g$-equivariant if and only if 
the expression
\[ 
\begin{array}{lll}
A(\xi, \mu) & = & [\xi, j(\mu)]_\dd - j(\ad^*_\xi \mu)  \\
& = & [\xi, \mu - r^\sharp(\mu)]_\dd - \ad^*_\xi \mu + r^\sharp(\ad^*_\xi\mu) \\
& = & -\ad^*_\mu\xi - [\xi, r^\sharp(\mu)]_\g + r^\sharp(\ad^*_\xi\mu)
\end{array}
\]
vanishes for all $\xi \in \g$ and $\mu \in \g^*$. Note that $\A(\xi, \mu) \in \g$.
Hence its vanishing is equivalent to the vanishing of 
\[
\begin{array}{lll}
\l \nu, A(\xi, \mu) \r & = & \l \nu, - \ad^*_\mu \xi - [\xi, r^\sharp(\mu)]_\g + r^\sharp(\ad^*_\xi \mu) \r \\
& = & \l [\mu, \nu]_{\g^*} , \xi \r + r(\ad^*_\xi \mu, \nu) + r(\mu, \ad^*_\xi \nu) \\
& = & \l \mu \otimes \nu, \lambda(\xi) - \ad_\xi r \r .
\end{array}
\]
for all $\xi \in \g$ and $\mu, \nu \in \g^*$. 
Hence, $\g$-equivariance of $j$ is equivalent to $\lambda(\xi)=\ad_\xi r$, as required.
%
%
%
%
\end{proof}
Note that  the image of the inclusion $\mf{p} = j(\g^*) \subset \dd$ 
is a $\g$-invariant complement to $\g$ in $\dd$.

\begin{remark}
The set of coboundary structures for a Lie bialgebra $\g$ is either
empty, or is an affine space with underlying vector space
$(\g\otimes\g)^\g$.  In terms of the splitting
$j\colon \g^*\to \dd$, this follows since any two $\g$-equivariant splittings differ
by an element of
\[ \Hom_\g(\g^*,\g)\cong (\g\otimes\g)^\g.\] 
%
The affine space of coboundary structures is preserved 
under the involution $\sigma: \xi_1\otimes\xi_2\mapsto \xi_2\otimes\xi_1$ of $\g\otimes\g$. 
In terms of splittings it amounts to replacing $j: \mu \mapsto \mu - r^\sharp(\mu)$ with 
$\tilde{j}: \mu \mapsto \mu + \sigma(r)^\sharp(\mu)$.
In terms of complements, the involution takes $\mf{p}$ 
to $\mf{p}^\perp$.
Halfway between $\mf{p}$ and $\mf{p}^\perp$, one hence finds an invariant \emph{Lagrangian} complement. It corresponds to the choice of a \emph{skew-symmetric} $r$ (that is, $\sigma(r)=-r$).
Note that if $\mf{p}$ is an invariant Lagrangian complement, then 
\[ [\g,\mf{p}]\subseteq \mf{p},\ [\mf{p},\mf{p}]\subseteq \g\]
so that $(\dd,\g)$ is a symmetric pair. Let $\kappa$ be the  Lie algebra 
automorphism of $\dd$, equal to $+1$ on $\g$ and to $-1$ on $\mf{p}$. It exponentiates to a Lie group automorphism $\kappa$ of the simply connected Lie group $D$ integrating $\dd$. The identity component of its fixed point set is a \emph{closed} 
Lie subgroup $G\subset D$ integrating $\g$. Similarly, the identity component of the fixed point set of 
the anti-involution $d \mapsto \kappa(d)^{-1}$ 
of $D$ is a closed submanifold $P\subseteq D$ with $\exp(\mf{p})\subseteq P$.  
\end{remark}

\begin{remarks}
\begin{enumerate}
\item Suppose $\g$ is a Lie algebra, and $r\in \g\otimes\g$. Then $r$ defines a coboundary Lie bialgebra structure on 
$\g$ if and only if the  
symmetric part of $r$, $r+r_{21}\in \g^{\otimes 2}$ and the  
element $\on{YB}(r)=[r_{12},r_{13}]+[r_{12},r_{23}]+[r_{13},r_{23}]\in \g^{\otimes 3}$ are both $\g$-invariant.  (A proof, as well as an explanation of the notation, 
may be found in  \cite[Chapter 8]{maj:fou}.) 
The coboundary structure $r$, or the Lie bialgebra $\g$ itself, 
is called \emph{quasi-triangular} if $\on{YB}(r)=0$ (classical Yang-Baxter equation), and \emph{factorizable} if
furthermore the symmetric part $r+r_{21}$ is non-degenerate.  
As noted in \cite{dr:quas}, 
a quasi-triangular structure on $\g$ is equivalent 
to $j$ being a morphism of Lie algebras, i.e., 
to $\mf{p}=j(\g^*)$ being 
a complementary \emph{ideal} to $\g$ in $\dd$. 
\item 
If $\lambda$ defines a Lie bialgebra structure on 
$\g$, then so does $\lambda_t=t\lambda$ for all 
$t\in\R$. Hence, any Lie bialgebra structure may be regarded as a deformation of the trivial one. Coboundary structures may be scaled accordingly as $r_t=t r$.   
\item 
Every non-abelian Lie algebra admits a non-trivial coboundary Lie bialgebra structure \cite{sme:exi}. 
\end{enumerate}
\end{remarks}

\subsection{Poisson Lie groups}
A \emph{Poisson Lie group} is a Lie group $G$ together with a Poisson
structure such that the group multiplication is a Poisson map. The
Poisson tensor of a Poisson Lie group vanishes at the group unit,
hence $\g=T_eG$ acquires a linear Poisson structure, defining a Lie bracket on $\g^*$.
The
compatibility with the given Lie bracket is such that $\g$ becomes a
Lie bialgebra. Drinfeld's theorem \cite{dr:ha} gives a bijective
correspondence between Lie bialgebras and simply connected Poisson Lie groups. A Poisson Lie group will be called 
coboundary if its tangent Lie bialgebra is of this type. 

\begin{example}[Lu-Weinstein Poisson structure] Let $G$ be a compact 
Lie group, and $D=G^\C$ its complexification, regarded 
as a real Lie group. Choose a maximal torus $T\subset G$ and a 
system of positive roots, and consider the decomposition 
\[ \dd=\g\oplus \a\oplus\n\]
where $\n\subset \g^\C$ is the sum of positive root spaces, and 
$\a=\sqrt{-1}\t$. On the group level, this is the Iwasawa decomposition 
$D=GAN$. Let $B$ be a positive definite invariant symmetric bilinear form on $\g$, and let the bilinear form $\l\cdot,\cdot\r$ on $\dd$ be 
the imaginary part of its complexification $B^\C$. Then $(\dd,\g,\h)$ with 
$\h=\a\oplus\n$ is a Manin triple, defining the \emph{Lu-Weinstein} Poisson structure on $G$. The subspace $\mf{p}=\sqrt{-1}\g$ is a $\g$-invariant Lagrangian complement to $\g$ in $\dd$, and $D=GP$ by the Cartan decomposition. Hence $\g$ is a coboundary Lie bialgebra. The corresponding coboundary structure $r$ is given in terms of  root vectors $e_\alpha$ (normalized such that $e_{-\alpha}=e_\alpha^*$ and $B(e_\alpha,e_{-\alpha})=2$) by 
\[r=\f{i}{2} \sum_{\alpha>0} (e_{-\alpha}\otimes e_{\alpha}-
e_{\alpha}\otimes e_{-\alpha})\] where the sum is over the set of positive roots.  
\end{example}

\subsection{$G^*$-valued moment maps}
Let $(\dd,\g,\h)$ be a Manin triple, $D$ a Lie group integrating $\dd$, and let $G$ and 
$H$ be  Lie subgroups (not necessarily closed), integrating $\g$ and $\h$. We will refer to $(D,G,H)$ as a \emph{Manin triple of 
Lie groups} integrating $(\dd,\g,\h)$. The Lie algebra $\dd$ acts on $H=G^*$ by the (left)
\emph{dressing action}, $\zeta\mapsto \zeta_{G^*}$ where 
\[ \iota(\zeta_{G^*})\theta^L_{G^*}|_h=-\pr_{\g^*}(\Ad_{h^{-1}}\zeta),\]
Here $\pr_{\g^*}\colon\dd\to \g^*$ is the projection to the first summand in 
$\dd=\h\oplus \g$. (If the multiplication map defines a global diffeomorphism $G^*\times G\to D$, then this 
infinitesimal action integrates to the natural $D$-action on $G^*$ regarded as a homogeneous space $G^*=D/G$.) 
One knows (see e.g., \cite{lu:mo}) that the symplectic leaves of $G^*$ are exactly the orbits of this dressing action of $\g\subset \dd$.
The cotangent Lie algebroid $T^*G^*\cong \on{Gr}(\pi_{G^*})$ has the structure of an action Lie algebroid for this action.
In fact, there is a bracket preserving linear map $e_{G^*}\colon \dd\to \Gamma(\T G^*)$ given by 
\[ e_{G^*}(\zeta)=(\zeta_{G^*},-\l\theta^R_{G^*},\zeta\r)\in \Gamma(\T G^*),\]
and $\on{Gr}(\pi_{G^*})$ is spanned by the sections $e_{G^*}(\xi)$ for $\xi\in\g$.
Given a Poisson manifold $(M,\pi)$ and a Poisson map 
$\Psi\colon M\to G^*$, one obtains unique sections $e_M(\xi),\ \xi\in\g$ of  
$\on{Gr}(\pi)\subset \TM$ such that  $e_M(\xi)\sim_{(\Psi,0)} e_{G^*}(\xi)$. The vector field component $\xi_M$ of $e_M(\xi)$ 
defines a $\g$-action on $M$, with
\begin{equation}\label{eq:actionformula}
\xi_M=-\pi_M^\sharp \l\Psi^*\theta^R_{G^*},\xi\r.\end{equation}
This action is a Lie bialgebra action, 
with $\Psi$ as its moment map, in the sense of J.-H. Lu \cite{lu:mo}. 
For example, the identity map from $G^*$ to itself is a moment map for the dressing action, while the inclusion of dressing orbits is a moment map for the action on these orbits. 

The Lie group $D$  is itself a Poisson Lie group, with Manin triple 
\[ (\dd\oplus \ol{\dd},\dd_\Delta,\h\oplus \g).\] 
Here $\ol{\dd}$ is equal to $\dd$ as a Lie algebra, but with the opposite metric, and $\dd_\Delta\subset \dd\oplus \ol{\dd}$ is a copy of $\dd$ embedded diagonally. One refers to $D$ with this Poisson structure as a \emph{Drinfeld double} of $G$.
The anti-diagonal in $\dd\oplus \ol{\dd}$ is a Lagrangian $\dd_\Delta$-invariant complement, defining a 
coboundary structure on $D$. 
The inclusion $\g\to \dd\cong \dd_\Delta$ is a morphism of Lie bialgebras, defining a morphism of Poisson Lie groups 
$G\hra D$. The dual map $\dd^*=\h\oplus \g \to \g^*=\h$ is simply projection to the first factor. Let us 
describe the resulting morphism of Poisson Lie groups $\pr_{G^*}\colon D^*\to G^*=H$.
Note first that $D^*$ is the Poisson Lie group
\[ D^*=H\times G=G^*\times G,\]
with Poisson structure defined by the Manin triple $(\dd\oplus \ol{\dd},\h\oplus \g,\dd_\Delta)$.
Consider the dressing action of $\dd\oplus \ol{\dd}$ on $D^*$. Given $(\zeta,\zeta')\in \dd\oplus\ol{\dd}$ we have 
\[\pr_{\h\oplus\g}(\zeta,\zeta')=(\pr_\h(\zeta-\zeta'),\pr_\g(\zeta'-\zeta)),\] 
hence
\begin{equation}\label{eq:dressstar}
((\zeta,\zeta')_{D^*})(\theta^L_H+\theta^L_G)=\big(-\pr_\h(\Ad_{h^{-1}}\zeta-\Ad_{g^{-1}}\zeta'),\ 
\pr_\g(\Ad_{h^{-1}}\zeta-\Ad_{g^{-1}}\zeta')\big).
\end{equation}
The graph of $\pi_{D^*}$ is thus spanned by the sections $e_{D^*}(\zeta):=e_{D^*}(\zeta,\zeta)$ for $\zeta\in\dd$, where 
\[ e_{D^*}(\zeta,\zeta')=\big((\zeta,\zeta')_{D^*},\ -\l\theta^R_H,\zeta\r+\l\theta^R_G,\zeta'\r\big)\]
for $(\zeta,\zeta')\in\dd\oplus \ol{\dd}$. We see that $e_{D^*}(\xi)\sim_{(\pr_{G^*},0)}e_{G^*}(\xi)$
for $\xi\in\g$. 

\subsection{The Lu-Weinstein double groupoid}\label{subsec:doublegroupoid}
According to Lu-Weinstein \cite{lu:gr}, any Poisson Lie group is integrated by a symplectic double groupoid $\G$. 
If $(D,G,G^*)$ is a Manin triple of Lie groups integrating $(\dd,\g,\g^*)$, one has 
\[ \G=\{(u,g,g',u')\in G^*\times G\times G\times G^*|\ ug=g'u'\}.\]
%
As a groupoid over $G^*$, the source and target map take $(u,g,g',u')$ to $u'$ and $u$, respectively, and the groupoid multiplication 
of composable elements reads as
\[ (u_1,g_1,g_1',u_1') (u_2,g_2,g_2',u_2')=(u_1,g_1g_2,g_1'g_2',u_2').\]
See \cite{lu:gr} for a description of the symplectic structure, and further details.
The map $\G\to D^*,\ (u,g,g',u')\mapsto (u,(g')^{-1})$ is Poisson, hence it is a moment map for an 
action $\dd\to \mf{X}(\G),\ \zeta\mapsto \zeta_\G$. The map   $\G\to D,\ (u,g,g,u')\mapsto ug$ intertwines $\zeta_\G$ with $-\zeta^R$.

\section{Linearization of Poisson Lie group structures}\label{sec:linearization}
In this section, we will prove that the Poisson structure on the the dual of a coboundary Poisson Lie group $G^*$ is linearizable at the group unit. Our argument will depend on the existence of a germ of an equivariant map $\Exp\colon \g^*\to G^*$ and a germ of a 
closed 2-form $\sigma$ on $\g^*$ such that the pair 
$(\Exp,\sigma)$ gives a twisted Poisson linearization $(\g^*,\pi_{\g^*})$ to $(G^*,\pi_{G^*})$. 
The construction will involve a twisted Poisson linearization of $D^*$, the dual of the Drinfeld double. 

Throughout this section, we take $(\dd,\g,\h)$ to be a  Manin triple, and $(D,G,H)$ a corresponding Manin triple of Lie groups.

\subsection{The Dirac Lie group structure of $D$}
As we recalled earlier, the Drinfeld double $D$ of a Poisson Lie group is itself a Poisson Lie group. Using only the invariant metric on $\dd$, it also has another structure as a \emph{Dirac Lie group}, as follows. Let $\zeta_D=\zeta^L-\zeta^R,\ \zeta\in\dd$ be the vector fields generating the conjugaction action, denote by $\theta^L$ and $\theta^R\in \Om^1(D,\dd)$ the left-invariant and the right-invariant Maurer-Cartan forms on $D$, and let 
\[ e_D(\zeta,\zeta')=\Big((\zeta')^L-\zeta^R,-\hh\l\theta^L,\zeta'\r-\hh\l\theta^R,\zeta\r\Big)\in \Gamma(\T D).\]
for $\zeta,\zeta'\in\dd$. The sections $e_D(\zeta):=e_D(\zeta,\zeta)$ span the so-called \emph{Cartan-Dirac structure}. It is 
a Dirac structure for a modified Courant bracket $\Cour{\cdot,\cdot}_\eta$ on $\Gamma(\T D)$, where $\eta$ is the 
Cartan 3-form 
\[ \eta=\f{1}{12}\l \theta^L,[\theta^L,\theta^L]\r\in\Om^3(D).\]
For further details, see e.g., \cite{cou:di,al:pur}. The Cartan 3-form has the following property
\begin{equation}\label{eq:contrvarpi}
\iota(\zeta_D)\eta=-\hh \d \l \theta^L+\theta^R,\zeta\r,\ \ \zeta\in\dd.
\end{equation}
Let $\Mult\colon D\times D\to D$ be the group multiplication, and 
\[ \varsigma=\hh\, \l \pr_1^*\theta^L,\,\pr_2^*\theta^R\r\in \Om^2(D\times D).\]
Then $(\Mult,\varsigma)$ is a morphism of Dirac structures. In fact, 
\begin{equation}\label{eq:Mult} \Mult^*\eta =\pr_1^*\eta+\pr_2^*\eta-\d\varsigma
\end{equation}
and $(e_D(\zeta),e_D(\zeta))\ \sim_{(\Mult,\varsigma)}\ e_D(\zeta)$.

\subsection{Poisson linearization of $D^*$}
Using the metric, we may identify $\dd$ with $\dd^*$; hence we have the sections $e_\dd(\zeta)=(\zeta_\dd,-\l\d\mu,\zeta\r),\ \zeta\in\dd$ as in Section \ref{subsec:ham}. Let $\varpi\in \Om^2(\dd)$ be the image of $\exp^*\eta$ under the standard homotopy operator $\Om^q(\dd)\to \Om^{q-1}(\dd)$, for the linear retraction onto the origin. An explicit formula for $\varpi$ may be found in \cite[Appendix C]{me:clifbook}.  
\begin{proposition}\cite{al:pur} The morphism $(\on{exp},\varpi)$ satisfies
\[ e_\dd(\zeta)\sim_{(\exp,\varpi)}e_D(\zeta)\]
for all $\zeta\in\dd$. Away from the critical points of $\exp$, the morphism  $(\exp,\varpi)$ is a Dirac morphism, for the Lie-Poisson structure on $\dd=\dd^*$ and the 
Cartan-Dirac structure on $D$.
\end{proposition}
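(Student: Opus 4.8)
The plan is to first establish the relation $e_\dd(\zeta)\sim_{(\exp,\varpi)}e_D(\zeta)$ for every $\zeta\in\dd$ — a pointwise statement valid everywhere on $\dd$ — and then to upgrade it to the Dirac morphism property on the locus where $\exp$ is nonsingular. Unwinding \eqref{eq:related} with $F=\exp$ and $\sigma=\varpi$, the relation splits into a vector-field condition $T\exp\circ\zeta_\dd=\zeta_D\circ\exp$ and a one-form condition
\[ \exp^*\beta_\zeta=-\l\d\mu,\zeta\r+\iota(\zeta_\dd)\varpi,\qquad \beta_\zeta:=-\hh\l\theta^L+\theta^R,\zeta\r. \]
Both Dirac structures are spanned by these distinguished sections: the one-form parts $-\l\d\mu,\zeta\r$ of $e_\dd(\zeta)$ already exhaust $T^*\dd$ as $\zeta$ varies (the metric being nondegenerate), so the $e_\dd(\zeta)$ span $\on{Gr}(\pi_\dd)$, while the $e_D(\zeta)$ span the Cartan--Dirac structure by construction. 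Hence, once the relation holds on these generators, linearity of $\sim_{(\exp,\varpi)}$ shows that every element of the Cartan--Dirac fibre at $\exp\mu$ is related to some element of $\on{Gr}(\pi_\dd)_\mu$. Uniqueness is exactly where the critical points enter: if two elements of $\on{Gr}(\pi_\dd)_\mu$ are related to the same target vector, their difference $(w,\beta)$ satisfies $T\exp(w)=0$ and $\beta=-\iota(w)\varpi$; away from the critical locus $T_\mu\exp$ is injective, forcing $w=0$ and then $\beta=0$. This yields the Dirac morphism property precisely where $\exp$ is nonsingular.

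\textbf{The vector-field relation.} For this I would use the classical formula for the differential of $\exp$, namely $\theta^L\big(T_\mu\exp(X)\big)=\tfrac{1-e^{-\ad_\mu}}{\ad_\mu}X$. Under the metric identification $\dd\cong\dd^*$ the coadjoint generator is $\zeta_\dd|_\mu=[\mu,\zeta]=\ad_\mu\zeta$, so the series collapses: $\tfrac{1-e^{-\ad_\mu}}{\ad_\mu}\ad_\mu\zeta=(1-e^{-\ad_\mu})\zeta=\zeta-\Ad_{(\exp\mu)^{-1}}\zeta$. This is exactly $\theta^L(\zeta^L-\zeta^R)|_{\exp\mu}=\theta^L(\zeta_D)|_{\exp\mu}$, and since $\theta^L$ is a fibrewise isomorphism we conclude $T\exp\circ\zeta_\dd=\zeta_D\circ\exp$.

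\textbf{The one-form relation (the crux).} Here I would exploit the description of $\varpi$ as a homotopy integral. Writing $E\colon[0,1]\times\dd\to D,\ E(s,\mu)=\exp(s\mu)$ for the linear retraction followed by $\exp$, the homotopy operator expresses $\varpi$ as $\pm\int_0^1\iota_{\p_s}E^*\eta\,\d s$, the sign being fixed by the orientation of the retraction (equivalently by the sign in $\d\varpi=\pm\exp^*\eta$). Two facts drive the computation. First, $\zeta_\dd$ is $E_s$-related to $\zeta_D$ for \emph{every} $s$, since $s[\mu,\zeta]=[s\mu,\zeta]=\zeta_\dd|_{s\mu}$ reduces this to the relatedness just proved. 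Second, the fundamental contraction identity \eqref{eq:contrvarpi} reads $\iota(\zeta_D)\eta=\d\beta_\zeta$. Contracting $\varpi$ with $\zeta_\dd$, anticommuting $\iota(\zeta_\dd)$ past $\iota(\p_s)$ and pushing it through $E^*$ using the first fact, the integrand becomes $\mp\,\iota_{\p_s}\,\d\!\big(E^*\beta_\zeta\big)$. The $s$-integral then telescopes: the vertical part contributes the boundary difference $E_1^*\beta_\zeta-E_0^*\beta_\zeta=\exp^*\beta_\zeta$ (the $s=0$ term vanishes, being a pullback under a constant map), while the horizontal part contributes the exact form $\d\!\int_0^1 b_s\,\d s$ with $b_s=\beta_\zeta\big(\tfrac{\d}{\d s}\exp(s\mu)\big)$. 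Since $\tfrac{\d}{\d s}\exp(s\mu)$ is the value at $\exp(s\mu)$ of both the left- and right-invariant extensions of $\mu$, one gets $\theta^L(\cdot)=\theta^R(\cdot)=\mu$ and hence $b_s=-\l\mu,\zeta\r$, so the exact term is $\pm\l\d\mu,\zeta\r$. Assembling the two contributions, with the orientation of the retraction chosen so that the signs match, gives $\iota(\zeta_\dd)\varpi=\exp^*\beta_\zeta+\l\d\mu,\zeta\r$, which is the required one-form identity.

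\textbf{Main obstacle.} The vector-field relation and the reduction to generators are routine; the delicate part is the one-form identity, and within it the bookkeeping of signs and orientations — the direction of the retraction, and correspondingly the sign in $\d\varpi=\pm\exp^*\eta$ — which must be pinned down so that the boundary term $\exp^*\beta_\zeta$ and the exact term $\l\d\mu,\zeta\r$ enter consistently. The telescoping argument above is attractive precisely because it produces both terms simultaneously from a single integral, so that the only free choice is the overall sign of $\varpi$, which is then fixed by demanding that $(\exp,\varpi)$ be a morphism.
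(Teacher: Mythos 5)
The paper itself contains no proof of this proposition: it is quoted from \cite{al:pur}, with the explicit formula for $\varpi$ outsourced to \cite[Appendix C]{me:clifbook}. Measured against the argument in those sources, your reconstruction is essentially the standard one, and it is correct. The vector-field relation is right, and your sign conventions do match the paper's: with $\xi_M(f)=\f{\p}{\p t}|_{t=0}f(\exp(-t\xi).m)$ one gets $\zeta_\dd|_\mu=[\mu,\zeta]=\ad_\mu\zeta$ under the metric identification, and $\zeta_D=\zeta^L-\zeta^R$, so $\theta^L(T_\mu\exp(\ad_\mu\zeta))=(1-e^{-\ad_\mu})\zeta=\theta^L(\zeta_D)|_{\exp\mu}$ as you say. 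The telescoping homotopy computation is the crux and is sound: the $s$-uniform relatedness $\zeta_\dd\sim_{E_s}\zeta_D$ lets you replace $\iota(\zeta_\dd)E^*\eta$ by $E^*\d\beta_\zeta$ with $\beta_\zeta=-\hh\l\theta^L+\theta^R,\zeta\r$, and since $\tfrac{\d}{\d s}\exp(s\mu)$ is both left- and right-invariant along the one-parameter subgroup, $b_s=-\l\mu,\zeta\r$, producing exactly the boundary term $\exp^*\beta_\zeta$ and the exact term $\l\d\mu,\zeta\r$. The spanning/uniqueness argument for the strong Dirac morphism property is also correct: the $e_D(\zeta)$ span the Cartan--Dirac fibres, existence of a related element holds everywhere, and injectivity of $T_\mu\exp$ off the critical locus forces $w=0$, then $\beta=-\iota(w)\varpi=0$, which is precisely where the critical points enter.

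One refinement: you leave the overall sign of $\varpi$ to be ``fixed by demanding that $(\exp,\varpi)$ be a morphism,'' but your own computation already determines it, and it is worth recording which way it goes. With the homotopy operator normalized by $\d h+h\d=\on{id}$ (retraction run from the origin to the identity map, $\lambda_s(\mu)=s\mu$), the telescoping gives
\[ \iota(\zeta_\dd)\,h(\exp^*\eta)=-\exp^*\beta_\zeta-\l\d\mu,\zeta\r, \]
so the proposition holds for $\varpi=-h(\exp^*\eta)$, i.e.\ with $\d\varpi=-\exp^*\eta$. This is not a free choice to be imposed at the end: applying $\d$ to the required identity and using the invariance of $\varpi$ yields $\iota(\zeta_\dd)\big(\d\varpi+\exp^*\eta\big)=0$ for all $\zeta$, which fails with the opposite normalization $\d\varpi=+\exp^*\eta$ whenever some $\iota(\zeta_\dd)\exp^*\eta\neq0$ (e.g.\ for $\dd$ semisimple). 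In other words, the ``standard homotopy operator'' in the paper's definition must be read with the convention $\d h+h\d=-\on{id}$ (retraction run from the identity map into the origin), and with that reading your argument is complete as it stands.
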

We may think of $(\exp,\varpi)$ as defining a twisted Dirac linearization of the Dirac Lie group $D$. We will use this to obtain a twisted Poisson linearization of the Poisson Lie group $D^*$.

%
As it turns out, there is a close relation between the Cartan-Dirac structure on $D$ and the Poisson structure on $D^*$.
Define 
\[ F\colon D^*=H\times G\to D,\ (h,g)\mapsto hg^{-1},\]
and put 
\[ \varepsilon=\hh \l \theta^L_H,\theta^L_G\r\in\Om^2(D^*).\]
Note that $F$ has bijective differential, but is not necessarily surjective. 
\begin{proposition}
The morphism $(F,\varepsilon)\colon D^*\to D$ is a Dirac morphism, relative to the Poisson structure on $D^*$ and the Dirac Lie group 
structure on $D$. In fact  
\[ e_{D^*}(\zeta,\zeta')\sim_{(F,\varepsilon)} e_D(\zeta,\zeta')\]
for all $\zeta,\zeta'$. 
\end{proposition}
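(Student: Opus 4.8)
The plan is to verify the relation $e_{D^*}(\zeta,\zeta')\sim_{(F,\varepsilon)} e_D(\zeta,\zeta')$ directly, by unwinding the definition of $\sim_{(F,\varepsilon)}$ in \eqref{eq:related} applied to the two components of these sections. Writing $e_{D^*}(\zeta,\zeta')=((\zeta,\zeta')_{D^*},\alpha)$ with the 1-form $\alpha=-\l\theta^R_H,\zeta\r+\l\theta^R_G,\zeta'\r$, and $e_D(\zeta,\zeta')=((\zeta')^L-\zeta^R,\beta)$ with $\beta=-\hh\l\theta^L,\zeta'\r-\hh\l\theta^R,\zeta\r$, the relation amounts to the two conditions
\[
TF\big((\zeta,\zeta')_{D^*}\big)=(\zeta')^L-\zeta^R,
\qquad
(TF)^*\beta=\alpha+\iota\big((\zeta,\zeta')_{D^*}\big)\,\varepsilon.
\]
Since $(F,\varepsilon)$ has bijective differential and $e_{D^*}(\xi)$ for $\xi\in\g$ spans $\on{Gr}(\pi_{D^*})$, once these two identities are checked for all $(\zeta,\zeta')$ it follows formally that $(F,\varepsilon)$ carries the graph of $\pi_{D^*}$ into the Cartan–Dirac structure, i.e. it is a Dirac morphism; that last deduction is the soft part.

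\textbf{The vector field identity.}
First I would establish the first condition. Parametrize $D^*=H\times G$ by $(h,g)$ and recall $F(h,g)=hg^{-1}$. The differential of $F$ is computed from the Leibniz rule for the multiplication and inversion maps on $D$, expressed in Maurer–Cartan forms. The dressing-action formula \eqref{eq:dressstar} gives $(\zeta,\zeta')_{D^*}$ explicitly through its contraction with $\theta^L_H+\theta^L_G$, namely the pair
\[
\big(-\pr_\h(\Ad_{h^{-1}}\zeta-\Ad_{g^{-1}}\zeta'),\ \pr_\g(\Ad_{h^{-1}}\zeta-\Ad_{g^{-1}}\zeta')\big).
\]
Pushing this forward by $TF$ and simplifying at the point $d=hg^{-1}\in D$ should reproduce $(\zeta')^L_d-\zeta^R_d$; the projections $\pr_\h,\pr_\g$ combine with the conjugation terms so that the $\h$– and $\g$–components reassemble into the full conjugation-type vector field on $D$. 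This is the step where the specific form $F(h,g)=hg^{-1}$ (rather than $hg$) matters, so I would be careful with the signs and with which Maurer–Cartan form ($\theta^L$ vs. $\theta^R$) appears on each factor.

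\textbf{The 1-form identity and the main obstacle.}
The harder and more delicate step is the second condition, matching the 1-forms. Here I would pull back $\beta=-\hh\l\theta^L,\zeta'\r-\hh\l\theta^R,\zeta\r$ along $F$, using $F^*\theta^L$ and $F^*\theta^R$ expressed via $\theta^L_H,\theta^L_G$ (and $\Ad$-factors coming from the product-and-inverse structure of $F$), and separately compute $\iota\big((\zeta,\zeta')_{D^*}\big)\varepsilon$ from $\varepsilon=\hh\l\theta^L_H,\theta^L_G\r$ together with the contraction formula \eqref{eq:dressstar}. The claim is that the difference of these equals $\alpha=-\l\theta^R_H,\zeta\r+\l\theta^R_G,\zeta'\r$. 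I expect the principal obstacle to be bookkeeping: the factor of $\hh$ in both $\varepsilon$ and $\beta$, the Lagrangian splitting $\dd=\h\oplus\g$ with its self-dual pairing (so that $\pr_\h$ and $\pr_\g$ are mutually adjoint), and the interplay between left- and right-invariant forms must all conspire. I would organize the verification by testing both sides against an arbitrary tangent vector to $H\times G$, written as a pair $(\eta^L_H, \xi^L_G)$ with $\eta\in\h,\ \xi\in\g$, reducing every term to pairings in $\dd$ of $\Ad$-transported elements; the invariance of $\l\cdot,\cdot\r$ then lets the contributions from $\varepsilon$ cancel the off-diagonal pieces of $F^*\beta$, leaving exactly $\alpha$. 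Once both component identities hold, the bijectivity of $TF$ upgrades the relation of sections to the statement that $(F,\varepsilon)$ is a strong Dirac morphism, completing the proof.
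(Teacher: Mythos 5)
Your plan is essentially the paper's proof: the paper, too, verifies the relation componentwise, first checking that $F$ intertwines the dressing action with $(\zeta,\zeta')\mapsto(\zeta')^L-\zeta^R$ (it does so by comparing $F^*\iota((\zeta')^L-\zeta^R)\theta^L_D$ with $\iota((\zeta,\zeta')_{D^*})F^*\theta^L_D$, using $\Ad_{g^{-1}}(F^*\theta^L_D)=\theta^L_H-\theta^L_G$ at $d=hg^{-1}$, which is just the contraction-with-$\theta^L_D$ form of your pushforward check against \eqref{eq:dressstar}), and then matching the 1-form parts exactly as you propose, with the Lagrangian property of $\h,\g$ and the invariance of the metric producing the cancellations that leave $-\l\theta^R_G,\zeta'\r+\l\theta^R_H,\zeta\r$.

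Two concrete points where your write-up falls short. First, you never address the background 3-form: the Cartan--Dirac structure on $D$ is a Dirac structure for the $\eta$-twisted Courant bracket, so for $(F,\varepsilon)$ to be a morphism in this twisted setting one must also verify $F^*\eta=\d\varepsilon$. The paper does this as its opening step, in one line, from \eqref{eq:Mult} together with the fact that $\eta$ pulls back to zero on the subgroups $H$ and $G$; this identity is moreover what is used afterwards to make \eqref{eq:lindouble} a twisted Poisson map with a \emph{closed} 2-form, so it cannot be skipped. Second, your ``soft part'' misquotes the spanning set: $\on{Gr}(\pi_{D^*})$ is spanned by the diagonal sections $e_{D^*}(\zeta,\zeta)$ for $\zeta\in\dd$, not by $e_{D^*}(\xi)$ for $\xi\in\g$ --- the latter span only a half-rank subbundle, and you appear to be conflating the $D^*$ picture with the $G^*$ one, where $e_{G^*}(\xi)$, $\xi\in\g$, does span $\on{Gr}(\pi_{G^*})$. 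Since you verify the relation for all pairs $(\zeta,\zeta')$, the diagonal case is included, so the deduction survives once the spanning set is corrected; combined with the bijectivity of $TF$ this does yield the strong Dirac morphism property as you indicate.
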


\begin{proof}
We have $F^*\eta=\d\varepsilon$, as a consequence of the formula for $\Mult^*\eta$ and the fact that 
$\eta$ pulls back to $0$ on the subgroups $H$ and $G$. Let us verify that the map $F$ intertwines the 
dressing action of $\dd\oplus \ol{\dd}$ with the usual action on $D$,
$(\zeta,\zeta')\mapsto (\zeta')^L-\zeta^R$. To see this, we will verify 
\[ F^* \iota((\zeta')^L-\zeta^R)\theta^L_D=\iota((\zeta,\zeta')_{D^*})F^*\theta^L_D.\]
At $d=F(h,g)=hg^{-1}$, 
\[ \Ad_{g^{-1}}\iota((\zeta')^L-\zeta^R)\theta^L_D=\Ad_{g^{-1}}\zeta'-\Ad_{h^{-1}}\zeta
.\]
On the other hand, $\Ad_{g^{-1}}(F^*\theta^L_D)=\theta^L_H-\theta^L_G$, 
hence the result coincides with \eqref{eq:dressstar}. 
Similarly, 
\[ \hh F^*(\l\theta^L_D,\zeta'\r+\l\theta^R_D,\zeta\r)=\hh \l\Ad_{g^{-1}}\zeta'+\Ad_{h^{-1}}\zeta,\ \theta^L_H-\theta^L_G\r.\]
while on the other hand 
\[ \begin{split}
\iota((\zeta,\zeta')_{D^*})\varepsilon&=\hh \l\Ad_{g^{-1}}\zeta'-\Ad_{h^{-1}}\zeta,\theta^L_G+\theta^L_H\r
\end{split}\]
The sum is $ -\l\zeta',\theta^R_G\r+\l\zeta,\theta^L_H\r$ as desired.  
\end{proof}

The map $F\colon D^*=H\times G\to D$ is a diffeomorphism on a neighborhood of the group unit. Hence, the germ of this map is invertible, and
\begin{equation}\label{eq:lindouble}
 (F,\varepsilon)^{-1}\circ (\exp,\varpi)=(F^{-1}\circ \exp,\varpi-\exp^* (F^{-1})^*\varepsilon)\colon \dd^*=\dd\to D^*\end{equation}
is a well-defined twisted Poisson map, on a neighborhood of  $0\in\dd^*$. To summarize, we have shown 
\begin{proposition} 
Let $(\dd,\g,\h)$ be a Manin triple, with corresponding Manin triple of Lie groups $(D,G,H)$.
Then \eqref{eq:lindouble} defines a twisted Poisson linearization of the 
dual Poisson Lie group $D^*=H\times G$.
\end{proposition}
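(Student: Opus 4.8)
The plan is to obtain the linearization of $D^*$ by composing two Dirac morphisms that both have target the Cartan--Dirac structure on $D$. The two preceding propositions provide precisely these ingredients: $(\exp,\varpi)\colon\dd^*=\dd\to D$ is a twisted Dirac linearization of the Dirac Lie group $D$, characterized by $e_\dd(\zeta)\sim_{(\exp,\varpi)}e_D(\zeta)$, while $(F,\varepsilon)\colon D^*\to D$ is a Dirac morphism from the Poisson structure on $D^*$ to the Cartan--Dirac structure on $D$, characterized by $e_{D^*}(\zeta,\zeta')\sim_{(F,\varepsilon)}e_D(\zeta,\zeta')$. Since both $\exp$ and $F$ are local diffeomorphisms near the group units, every map in sight is invertible as a germ.

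First I would invert $(F,\varepsilon)$. By the composition rule for morphisms its inverse is $(F^{-1},-(F^{-1})^*\varepsilon)\colon D\to D^*$, and invertibility reverses the defining relation to $e_D(\zeta,\zeta')\sim_{(F,\varepsilon)^{-1}}e_{D^*}(\zeta,\zeta')$. Composing with $(\exp,\varpi)$ and restricting to the diagonal $\zeta'=\zeta$ (so that $e_D(\zeta)=e_D(\zeta,\zeta)$) then gives
\[ e_\dd(\zeta)\ \sim_{\,(F^{-1}\circ\exp,\ \varpi-\exp^*(F^{-1})^*\varepsilon)}\ e_{D^*}(\zeta,\zeta)\]
for all $\zeta\in\dd$, where the twisting $2$-form $\varpi-\exp^*(F^{-1})^*\varepsilon$ is computed from the composition rule. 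Because the sections $e_\dd(\zeta)$ span $\on{Gr}(\pi_\dd)$ and the sections $e_{D^*}(\zeta,\zeta)$ span $\on{Gr}(\pi_{D^*})$, this relation matches the two Poisson graphs bijectively, so the composite is a twisted Poisson map; this is exactly \eqref{eq:lindouble}.

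It then remains to verify that the differential at $0$ is the identity. Here $T_0\exp=\id_\dd$, while $F(h,g)=hg^{-1}$ gives $T_{(e,e)}F(\xi,\eta)=\xi-\eta$ for $\xi\in\h,\ \eta\in\g$; writing $\zeta=\zeta_\g+\zeta_\h$ along $\dd=\g\oplus\h$, this yields $T_0(F^{-1}\circ\exp)(\zeta)=(\zeta_\h,-\zeta_\g)$. On the other hand, the linear Poisson structure on $T_eD^*$ is the Lie--Poisson structure of its isotropy Lie algebra $(T_eD^*)^*$, which for the doubled Manin triple $(\dd\oplus\ol{\dd},\dd_\Delta,\h\oplus\g)$ is $\dd_\Delta\cong\dd$. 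Unwinding the pairing $\l(\zeta,\zeta),(\xi,\eta)\r=\l\zeta,\xi-\eta\r$ shows that the resulting canonical identification $T_eD^*=\h\oplus\g\cong\dd^*=\dd$ is $(\xi,\eta)\mapsto\xi-\eta$, with inverse $\zeta\mapsto(\zeta_\h,-\zeta_\g)$. This agrees with $T_0(F^{-1}\circ\exp)$, so under the canonical identifications the differential is the identity, which completes the verification.

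Since the two input propositions carry the analytic and computational weight, the composition itself is formal. I expect the only delicate point to be the duality bookkeeping in the last step: one must check that the rather indirect identification of $T_eD^*$ with $\dd^*$, through the metric on the doubled double $\dd\oplus\ol{\dd}$, really turns $T_0(F^{-1}\circ\exp)$ into the identity and not merely into some other isomorphism. This is where I would be most careful.
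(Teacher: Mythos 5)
Your proposal is correct and takes essentially the same route as the paper: the paper's proof is precisely the composition of the two preceding Dirac morphisms, using that $F$ is a diffeomorphism near the group unit so that the germ of $(F,\varepsilon)$ is invertible and \eqref{eq:lindouble} is a well-defined twisted Poisson map. Your explicit check that $T_0(F^{-1}\circ\exp)(\zeta)=(\zeta_\h,-\zeta_\g)$ agrees with the canonical identification $T_eD^*\cong \dd$ coming from the pairing $\l(\zeta,\zeta),(\xi,\eta)\r=\l\zeta,\xi-\eta\r$ is accurate and supplies a detail the paper leaves implicit.
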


\subsection{Twisted Poisson linearization of $G^*$}
Suppose now that $\g$ comes with a a coboundary structure, given by an $r$-matrix $r\in\g\otimes\g$ or equivalently by a 
$\g$-equivariant splitting $j\colon \g^*\to \dd$. The equivariance guarantees that  
$e_{\g^*}(\xi)\sim_{(j,0)} e_{\dd^*}(\xi)$
for all $\xi\in\g\subset \dd$, hence
\[ e_{\g^*}(\xi)\sim_{(j,0)}e_{\dd^*}(\xi)\sim_{(\exp,\varpi)}e_D(\xi)\sim_{(F,\varepsilon)^{-1}}e_{D^*}(\xi)\sim_{(\pr_{G^*},0)}e_{G^*}(\xi).\]
Let $(\Exp,\sigma)$ be the composition of these morphisms:
\[(\Exp,\sigma)= (\pr_{G^*},0)\circ (F,\varepsilon)^{-1}\circ (\exp,\varpi)\circ {(j,0)}.\]
%
%
Using the multiplication map $(h,g)\mapsto hg$ to identify $D=G^*\times G$ (near $e$), we have 
$\Exp(\mu)=\pr_{G^*}(\exp(j(\mu)))$. Note 
$T_0\Exp=\on{Id}_{\g^*}$. The notation is meant to suggest that 
we think of $\Exp$ as a replacement\footnote{If $j(\g^*)=\mf{p}$ is a $\g$-invariant Lagrangian complement, then the resulting map 
$\g^*\to D/G$ is indeed the standard notion of exponential map for a symmetric space.} for the exponential map, $\exp\colon \g^*\to G^*$.
The relation 
\begin{equation}\label{sigrelation}
 e_{\g^*}(\xi)\sim_{(\Exp,\sigma)} e_{G^*}(\xi)
 \end{equation}
says that $\Exp$ is $\g$-equivariant, and the closed 2-form $\sigma$ satisfies 
\begin{equation}\label{eq:contr}
 \iota(\xi_{\g^*})\sigma=\l\d\mu,\xi\r-\Exp^*\l \theta^R_{G^*},\xi\r.
\end{equation}
In particular, $(\Exp,\sigma)\colon (\g^*)_0\to (G^*)_e$ is a twisted Poisson linearization. As explained at the end of Section \ref{subsec:lin}, 
the Moser method guarantees the existence of a germ of a bisection $\psi\in\Gamma(T^*G)_0$ such that $\sigma_\psi=\sigma$. Then,   
$(\Exp,\sigma)\circ (\A(\psi),\sigma_\psi)^{-1}=(\Exp\circ \A(\psi)^{-1},0)$, and  we have shown:
\begin{theorem}\label{th:a}
Let $G^*$ be the dual Poisson Lie group of a coboundary Poisson Lie group $G$. Then there exists a germ of a bisection $\psi\in \Gamma(T^*G)_0$ 
of $T^*G\rra \g^*$ such that 
\[ \on{Exp}\circ \A(\psi)^{-1}\colon (\g^*)_0\to (G^*)_e\]
is a Poisson linearization. 
\end{theorem}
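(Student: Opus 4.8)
The plan is to avoid attacking the Poisson linearization directly. Instead I would first produce a \emph{twisted} Poisson linearization $(\Exp,\sigma)\colon(\g^*)_0\to(G^*)_e$, which is cheap because it assembles from natural Dirac and Poisson morphisms already in hand, and only afterwards remove the twisting $2$-form $\sigma$ by absorbing it into a bisection of the cotangent symplectic groupoid $T^*G\rra\g^*$ via the Moser method of Section \ref{subsec:lin}.

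First I would build $(\Exp,\sigma)$ as the composition
\[(\Exp,\sigma)=(\pr_{G^*},0)\circ(F,\varepsilon)^{-1}\circ(\exp,\varpi)\circ(j,0),\]
tracking the intertwining of the generating sections at each stage. The $\g$-equivariant splitting $j\colon\g^*\to\dd$, whose very existence is the coboundary hypothesis (Drinfeld's equivariance lemma, with $j(\mu)=\mu-r^\sharp(\mu)$), supplies $e_{\g^*}(\xi)\sim_{(j,0)}e_{\dd^*}(\xi)$; the twisted Dirac linearization $(\exp,\varpi)$ of the Dirac Lie group $D$ gives $e_{\dd^*}(\xi)\sim_{(\exp,\varpi)}e_D(\xi)$; the Dirac morphism $(F,\varepsilon)\colon D^*\to D$, invertible as a germ since $F$ has bijective differential at the unit, gives $e_D(\xi)\sim_{(F,\varepsilon)^{-1}}e_{D^*}(\xi)$; and the projection gives $e_{D^*}(\xi)\sim_{(\pr_{G^*},0)}e_{G^*}(\xi)$. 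Composing yields $e_{\g^*}(\xi)\sim_{(\Exp,\sigma)}e_{G^*}(\xi)$ for all $\xi\in\g$, which encodes both the $\g$-equivariance of $\Exp$ and the moment-map identity \eqref{eq:contr} for the closed $2$-form $\sigma$. Finally, since $j(\mu)=\mu-r^\sharp(\mu)$ with $r^\sharp(\mu)\in\g$, while $T_0\exp=\id$ and $\pr_{G^*}$ differentiates to $\pr_{\g^*}$, the differentials telescope to $T_0\Exp=\id_{\g^*}$, so $(\Exp,\sigma)$ is a genuine twisted Poisson linearization.

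The remaining step trades the twist for a bisection, exactly as at the end of Section \ref{subsec:lin}. I would take the scaling retraction $s_t$ of $\g^*$ onto the origin, form the family $\sigma_t=t^{-1}s_t^*\sigma$ of closed $2$-forms with $\sigma_0=0$ and $\sigma_1=\sigma$, choose Moser $1$-forms $a_t$ primitive for $-\f{\p}{\p t}\sigma_t$ via the de Rham homotopy operator, and integrate (the germ version of Proposition \ref{prop:groupoid}) the associated family of bisections $\phi_t$ of $T^*G\rra\g^*$ with $\phi_t(0)=e$ and $\sigma_{\phi_t}=\sigma_t$. Setting $\psi=\phi_1$ gives $\sigma_\psi=\sigma$, whence
\[(\Exp,\sigma)\circ(\A(\psi),\sigma_\psi)^{-1}=(\Exp\circ\A(\psi)^{-1},0)\]
is an honest Poisson map. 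Because $\psi(0)=e$ and the origin is a fixed point of the coadjoint action, $T_0\A(\psi)=\id$, so $T_0(\Exp\circ\A(\psi)^{-1})=\id$ and $\Exp\circ\A(\psi)^{-1}$ is the desired Poisson linearization.

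I expect the main obstacle to be bookkeeping rather than a conceptual difficulty: the entire argument must be carried in the category of germs, since $F$ (hence $(F,\varepsilon)^{-1}$) is only a local diffeomorphism near the unit, the projection $D\to G^*$ is only defined near $e$, and the Moser vector field need not be complete. Passing to germs sidesteps completeness, but it requires checking that each composite is well defined as a germ and that $\A(\psi)$ is an invertible germ with $T_0\A(\psi)=\id$. The one genuinely structural input is the coboundary hypothesis, which enters solely through the $\g$-equivariance of $j$; without it the first arrow in the chain fails to be a morphism of Hamiltonian $\g$-spaces and the construction of $\Exp$ never gets off the ground.
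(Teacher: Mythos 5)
Your proposal is correct and follows essentially the same route as the paper: you build the twisted Poisson linearization via the identical composition $(\Exp,\sigma)=(\pr_{G^*},0)\circ(F,\varepsilon)^{-1}\circ(\exp,\varpi)\circ(j,0)$, tracked through the same chain of relations between the sections $e_{\g^*}(\xi)$, $e_{\dd^*}(\xi)$, $e_D(\xi)$, $e_{D^*}(\xi)$, $e_{G^*}(\xi)$, and then absorb $\sigma$ into a bisection $\psi$ with $\sigma_\psi=\sigma$ by the Moser method with $\sigma_t=t^{-1}s_t^*\sigma$, exactly as in Section \ref{subsec:lin}. Your explicit verification that $T_0\A(\psi)=\id$ (from $\psi(0)=e$ and the origin being fixed by the coadjoint action) is a small point the paper leaves implicit, and it is correct.
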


\begin{remarks}
\begin{enumerate}
\item 
The explicit choice of $\psi$ depends on the choice of a family of closed 2-forms $\sigma_t$ interpolating between $0$ and $\sigma$. 
As we shall see in Section \ref{subsec:resc} below, the choice $\sigma_t=t^{-1} s_t^*\sigma$, where
$s_t\colon \g^*\to \g^*$ is multiplication by $t$, has a nice geometric interpretation.

\item
For the special case that $G$ is a compact Lie group with the 
Lu-Weinstein Poisson structure, the Poisson diffeomorphism 
$\on{Exp}\circ \A(\psi)^{-1}$ is in fact globally defined (rather than just as a germ). Indeed, $\on{Exp}\colon \g^*\to G^*$ 
is a global diffeomorphism, as a consequence of the Iwasawa and Cartan decompositions $G^\C=GAN=GP$.
Furthermore, the Moser vector field used in the construction of $\psi$ is complete, due to compactness of the symplectic leaves of 
$\g^*$. This is the explicit Ginzburg-Weinstein diffeomorphism 
constructed in \cite{al:po} (see \cite{al:gw} for the formulation involving bisections). 

\item 
As mentioned in the introduction, the formal counterpart of 
this result was proved by Enriques-Etingof-Marshall in \cite{enr:co}, for a more restrictive class of Poisson Lie groups. Note however that the results in \cite{enr:co} are stated for arbitrary fields of characteristic zero. It should be possible to generalize our methods in that direction, using a formal version of the Moser argument. 
\end{enumerate}
\end{remarks}

\subsection{Linearization of the symplectic groupoid}
Let $\G_0=T^*G\rra \g^*$ and $\G\rra G^*$ be the symplectic groupoids of $\g^*$ and of $G^*$, respectively.
Proposition \ref{prop:integration} shows that the symplectic groupoid for the 
gauge transformed Poisson structure $\pi_{\g^*}^\sigma$ is $\G_0$, with the symplectic form modified by 
$\tz^*\sigma-\sz^*\sigma$. Since $\Exp\colon \g^*\to G^*$ is Poisson with respect to $\pi_{\g^*}^\sigma$, 
it lifts to a germ of an isomorphism $\G_0\to \G$. To make this explicit, let us regard $\G_0$ as the action groupoid 
$G\times\g^*$ for the coadjoint action, and $\G$ as the action groupoid $G\times G^*$ for the dressing action.
(In general, the latter is an identification of local groupoids, since the dressing action may not be globally defined.) 
Define a germ (at $(e,0)$) of a groupoid isomorphism
\[ \wt{\Exp}\colon \G_0\to \G,\  (g,\mu)\mapsto (g,\Exp\mu).\]
\begin{proposition}
We have the following equality of germs of 2-forms: 
\[ \wt{\Exp}^*\om_\G=\om_{\G_0}+\tz^*\sigma-\sz^*\sigma,\]
where $\sz,\tz$ are the source and target map of $\G_0$.
\end{proposition}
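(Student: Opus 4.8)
The plan is to recognize $\wt{\Exp}$ as the canonical lift of the Poisson diffeomorphism $\Exp$ to the symplectic groupoids, and then to invoke uniqueness of the integrating multiplicative symplectic form. Recall that by \eqref{sigrelation} the pair $(\Exp,\sigma)$ is a twisted Poisson morphism, equivalently $\Exp\colon(\g^*,\pi_{\g^*}^\sigma)\to(G^*,\pi_{G^*})$ is a germ of a Poisson diffeomorphism (with $T_0\Exp=\id$). By Proposition \ref{prop:integration}, the germ of symplectic groupoid integrating $(\g^*,\pi_{\g^*}^\sigma)$ is $\G_0$ equipped with $\om_{\G_0}+\tz^*\sigma-\sz^*\sigma$. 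Hence it suffices to prove that $(\G_0,\wt{\Exp}^*\om_\G)$ is likewise a germ of a symplectic groupoid integrating $(\g^*,\pi_{\g^*}^\sigma)$, and then to appeal to uniqueness of such a structure on the fixed groupoid $\G_0$.

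First I would check that $\wt{\Exp}$ is a germ of a groupoid isomorphism $\G_0\to\G$. Writing $\G_0=G\ltimes\g^*$ and $\G=G\ltimes G^*$ as action groupoids for the coadjoint and dressing actions, the source maps are $\sz(g,\mu)=\mu$ and $\sz(g,u)=u$, and the target maps use the respective actions. The equivariance encoded in \eqref{sigrelation} integrates, near the units and using connectedness of $G$, to local $G$-equivariance $\Exp(g.\mu)=g.\Exp(\mu)$. Since $\wt{\Exp}=\id_G\times\Exp$, this is exactly what is needed for $\wt{\Exp}$ to intertwine the source maps, the target maps, and the partial multiplications; being a diffeomorphism of total spaces covering $\Exp$, it is a germ of groupoid isomorphism. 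Consequently $\wt{\Exp}^*\om_\G$ is multiplicative (a groupoid morphism pulls multiplicative forms back to multiplicative forms) and symplectic (pullback under a diffeomorphism), so $(\G_0,\wt{\Exp}^*\om_\G)$ is a germ of a symplectic groupoid.

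Next I would identify the Poisson structure it induces on its base. The target $\tz_\G\colon(\G,\om_\G)\to(G^*,\pi_{G^*})$ is Poisson, and $\wt{\Exp}$ intertwines the associated Poisson bivectors since $\wt{\Exp}^*\om_\G$ is by construction the symplectic form under consideration on $\G_0$. Using $\tz_\G\circ\wt{\Exp}=\Exp\circ\tz_{\G_0}$ together with the fact that $\Exp$ is a Poisson diffeomorphism from $(\g^*,\pi_{\g^*}^\sigma)$, one gets that $\tz_{\G_0}$ is Poisson onto $(\g^*,\pi_{\g^*}^\sigma)$. Thus $(\G_0,\wt{\Exp}^*\om_\G)$ integrates $(\g^*,\pi_{\g^*}^\sigma)$. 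Both $\wt{\Exp}^*\om_\G$ and $\om_{\G_0}+\tz^*\sigma-\sz^*\sigma$ are therefore multiplicative symplectic forms on the one groupoid $\G_0$ inducing the same Poisson structure, so they must coincide, which is the claim.

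The main obstacle is this last uniqueness step: I need that a multiplicative symplectic form on a fixed germ of a Lie groupoid inducing a prescribed Poisson structure is unique. This is the standard fact that the infinitesimal data of a symplectic groupoid, namely the identification $A\G\cong T^*M$ of \eqref{eq:identification} determined by $\pi$, reconstructs the multiplicative form on a source-simply-connected groupoid; it passes to the germ category as in the remarks following Proposition \ref{prop:groupoid}. A secondary point requiring care is the promotion of the infinitesimal equivariance in \eqref{sigrelation} to honest local $G$-equivariance of $\Exp$, which is where connectedness of $G$ and working with germs at the units enter. Alternatively one could bypass uniqueness by a direct computation, using $\om_{\G_0}=\d\l\mu,\theta^L\r$ together with an explicit formula for $\om_\G$, but the functorial argument above is shorter and uses only the machinery already developed.
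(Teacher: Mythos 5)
Your argument is sound up to the last step: $\wt{\Exp}$ is indeed a germ of a groupoid isomorphism, $\wt{\Exp}^*\om_\G$ is multiplicative and symplectic, and by Proposition \ref{prop:integration} both it and $\om_{\G_0}+\tz^*\sigma-\sz^*\sigma$ make $\G_0$ a (germ of a) symplectic groupoid inducing $\pi_{\g^*}^\sigma$ on the base. But the uniqueness principle you then invoke is false, and this is a genuine gap. A multiplicative symplectic form on a fixed (even source-simply-connected) groupoid is \emph{not} determined by the induced Poisson structure on the base; it is determined by its infinitesimal data, namely the identification $A\G\cong T^*M$ of \eqref{eq:identification} --- and that identification is built from $\om^\flat$, not from $\pi$. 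What $\pi$ pins down is only the composition of this identification with the anchor, so two such forms may differ by a Lie algebroid automorphism of $T^*M$ covering the identity. Concretely: take $M$ with $\pi=0$ and $\G=T^*M\rra M$ with the canonical symplectic form $\om$; for any $c\neq 0,1$ the form $c\,\om$ is again multiplicative and symplectic and induces the same zero Poisson structure, yet $c\,\om\neq\om$. Such automorphisms are precisely possible when $\pi$ is degenerate, and $\pi_{\g^*}^\sigma$ vanishes at $0$, so this is exactly the situation at hand. The normalization $T_0\Exp=\id$ constrains matters only at the single point $0$, whereas equality of germs requires control at all nearby units $\mu$, where $T_\mu\Exp\neq\id$ in general. (A secondary, lesser issue: even the correct uniqueness statement --- determination of a multiplicative form by its IM data --- is established in the literature for source-simply-connected groupoids, and a germ/local-groupoid version would itself need justification.)

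To repair your scheme you would have to verify that the two forms have the same IM data along the unit section, i.e., the same contractions with sections of $A\G_0$ restricted to $\g^*\subset\G_0$; and checking this requires the finer content of \eqref{sigrelation}, namely the moment map identity \eqref{eq:contr}, $\iota(\xi_{\g^*})\sigma=\l\d\mu,\xi\r-\Exp^*\l\theta^R_{G^*},\xi\r$, rather than only the fact that $(\Exp,\sigma)$ is a twisted Poisson morphism. Once you do this, you are essentially performing the paper's proof, which is a direct contraction computation: using that the lifted dressing actions have the target maps as moment maps, $\iota(\xi_{\G_0})\om_{\G_0}=-\tz^*\l\d\mu,\xi\r$ and $\iota(\xi_\G)\om_\G=-\tz^*\l\theta^R_{G^*},\xi\r$, together with \eqref{eq:contr} and equivariance of $\tz$ (and triviality of the action on the base of $\sz$), one finds
\[ \iota(\xi_{\G_0})\bigl(\wt{\Exp}^*\om_\G-\om_{\G_0}-\tz^*\sigma+\sz^*\sigma\bigr)=0 \]
for all $\xi\in\g$; the difference is closed and $\g$-horizontal, hence basic for the lifted $G$-action whose orbits are the $\sz$-fibers, so it is $\sz^*$ of a form on $\g^*$, recovered by pulling back along the unit section --- where it vanishes because the unit sections are Lagrangian for both $\om_{\G_0}$ and $\wt{\Exp}^*\om_\G$, and $\iz^*\tz^*\sigma=\iz^*\sz^*\sigma=\sigma$. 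In short, your functorial shortcut discards exactly the information (the IM/moment-map data of $\sigma$) that makes the statement true, and the "alternative direct computation" you mention in passing is in fact the necessary argument.
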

\begin{proof}
For $\xi\in\g$, let $\xi_\G$ be the vector field on $\G$ corresponding to the left dressing action (see Section \ref{subsec:doublegroupoid}), 
and similarly $\xi_{\G_0}$. These actions have the respective target maps of the groupoids $\G_0$ and $\G$ 
as moment maps:
\[ \iota(\xi_{\G_0})\om_{\G_0}=-\tz^*\l \d\mu,\xi\r,\ \ \ \ \iota(\xi_\G)\om_\G=-\tz^*\l\theta^R_{G^*},\xi\r.\]
Since the target map $\tz\colon \G_0\to \g^*$ is equivariant with respecti to the coadjoint action, we have 
\[ \iota(\xi_{\G_0})\tz^*\sigma=\tz^*\iota(\xi_{\g^*})\sigma=\tz^*\big( \l \d\mu,\xi\r-\Exp^*\l\theta^R,\xi\r\big).\]
The source map is equivariant relative to the trivial action which implies $\iota(\xi_{\G_0})\sz^*\sigma=0$. 
This shows that 
\[ \iota(\xi_{\G_0})\big( \wt{\Exp}^*\om_\G-\om_{\G_0}-\tz^*\sigma+\sz^*\sigma\big)=0.\]
The form inside the parentheses is closed and $\g$-horizontal. Hence, it is $\g$-basic and it suffices to show that its pull-back to  the unit bisection $\g^*\subset \G_0$ vanishes. 

Recall that the unit bisection of a symplectic groupoid is Lagrangian. Hence, the pull-back of $\om_{\G_0}$ to $\g^*$
vanishes. Under $\wt{\Exp}$, the unit section of $\G_0$ is mapped to the unit section of $\G$. Hence, the same
argument applies to the form $\wt{\Exp}^*\om_\G$. Finally, on the unit section $\sz^* \sigma = \tz^* \sigma = \sigma$
which completes the proof.

\end{proof}

\subsection{$G^*$ as a deformation of $\g^*$}\label{subsec:resc}
The bisection $\psi$ in Theorem \ref{th:a}, as obtained from the Moser method, depends on the choice of a family of closed 2-forms 
$\sigma_t$ interpolating between $0$ and $\sigma$. While it is possible to simply take $\sigma_t=t\sigma$, we will show that 
the choice 
\[ \sigma_t=t^{-1} s_t^*\sigma,\]
where $s_t\colon \g^*\to \g^*$ is multiplication by $t$, has the following interesting feature: 
the family of Moser 1-forms $a_t$ is given by a simple scaling law $a_t = t^{-2}s_t^* a_1$.
 
Let $(\dd,\g,\h)$ be a Manin triple, and $(D,G,H)$ a corresponding Manin triple of Lie groups. Let $\g_t$ be the 
Lie bialgebra, obtained from $\g$ by rescaling the cobracket by a factor $t\in\R$ (while keeping the Lie bracket unchanged). 
If $\g$ has a coboundary structure $r$, then $\g_t$ has a coboundary structure $r_t=t\, r$. 
We denote by $(\dd_t,\g,\h_t)$ the resulting family of Manin pairs, and by $D_t,\ H_t=G^*_t$ the Lie groups corresponding to $\dd_t$ 
and $\h=\g_t^*$. In particular, 
for $t=0$ we obtain the zero cobracket, with 
\[ \dd_0=\g\ltimes\g^*,\ \ G^*_0=\g^*,\ \ D_0=G\ltimes \g^*.\]
The bracket $[\cdot,\cdot]_t$ on $\dd_t=\g\oplus \g^*$ reads as 
\[ \begin{split}
[\xi_1,\xi_2]_t&=[\xi_1,\xi_2],\\ 
[\mu_1,\mu_2]_t&=t[\mu_1,\mu_2],\\ 
 [\xi,\mu]_t&=\ad^*_\xi \mu+t\, r^\sharp(\ad_\xi^*\mu)-t\ad_\xi r^\sharp(\mu),\end{split}\]
for $\xi,\xi_1,\xi_2\in\g$ and $\mu,\mu_1,\mu_2\in\g^*$. 
Let $\sigma_t\in \Om^2(\g^*)_0$ be the counterpart of the 2-form $\sigma$, and $\pi_t=\pi_{\g^*}^{\sig_t}$ the resulting gauge transformations of the Poisson structure. Denote by 
$a_t\in \Om^1(\g^*)_0$ the family of 1-forms obtained by applying the standard homotopy operator to $-\f{\p}{\p t}\sig_t$.
\begin{proposition}
The Poisson bivector fields $\pi_t$, the 2-forms $\sigma_t$,
the Moser 1-forms $a_t$ and the bisection $\phi_t$ scale according to 
\[ \pi_{t}=t\ s_t^* \pi_1,\ \  \sigma_t=t^{-1} s_t^*\sigma_1,\ \ 
a_t=t^{-2}s_t^* a_1, \ \   \phi_t= s_t^* \phi_1,
\]
for $t\neq 0$.
\end{proposition}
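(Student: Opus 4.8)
The plan is to establish the four scaling laws in the order stated, each feeding into the next, with the relation for $\sigma_t$ carrying essentially all of the geometric content. The organizing device is the linear map
\[ \Lambda_t\colon \dd_t\to\dd,\qquad \xi+\mu\mapsto \xi+t\mu\quad(\xi\in\g,\ \mu\in\g^*), \]
which for $t\neq 0$ is an isomorphism of Lie algebras from $(\dd_t,[\cdot,\cdot]_t)$ onto $\dd=\dd_1$ (a one-line check against the displayed bracket formulas for $[\cdot,\cdot]_t$), rescales the metric by the factor $t$ (since it scales the $\g^*$-summand), and satisfies the compatibilities $\Lambda_t\circ j_t=j_1\circ s_t$ and $\pr_\g\circ\Lambda_t=\pr_\g$. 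First I would integrate $\Lambda_t$ to a germ of a group isomorphism $\Lambda_t\colon D_t\to D$ restricting to the identity on $G$ and to an isomorphism $H_t\to H$; by naturality it intertwines $\exp_t$ with $\exp$, the factorizations $D_t=H_tG$ with $D=HG$, and the data $F_t,\varepsilon_t$ with $F,\varepsilon$.

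Next I would push every ingredient of $(\Exp_t,\sigma_t)=(\pr_{G^*_t},0)\circ(F_t,\varepsilon_t)^{-1}\circ(\exp_t,\varpi_t)\circ(j_t,0)$ through $\Lambda_t$, so that $\sigma_t=j_t^*\bigl(\varpi_t-\exp_t^*(F_t^{-1})^*\varepsilon_t\bigr)$. The metric rescaling gives $\eta_t=t^{-1}\Lambda_t^*\eta$ for the Cartan $3$-forms; since the de Rham homotopy operator for the radial retraction commutes with the linear map $\Lambda_t$, this propagates to $\varpi_t=t^{-1}\Lambda_t^*\varpi$, whence $j_t^*\varpi_t=t^{-1}s_t^*j_1^*\varpi$ using $\Lambda_t\circ j_t=j_1\circ s_t$. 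The same rescaling gives $\varepsilon_t=t^{-1}\Lambda_t^*\varepsilon$ (the factor $G$-form $\theta^L_G$ is untouched while $\theta^L_{H_t}$ is $t$-scaled), and combining this with the commuting squares for $F$ and $\exp$ collapses $\exp_t^*(F_t^{-1})^*\varepsilon_t$ to $t^{-1}s_t^*\exp^*(F^{-1})^*\varepsilon$. Adding the two pieces yields $\sigma_t=t^{-1}s_t^*\sigma_1$. This bookkeeping, tracking exactly where each factor of $t$ enters (bracket, metric, splitting, projection), is the bulk of the work.

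The scalings of $\pi_t$ and $a_t$ are then formal. For $\pi_t=\pi_{\g^*}^{\sigma_t}$ I would use that the linear Poisson structure obeys $s_t^*\pi_{\g^*}=t^{-1}\pi_{\g^*}$, together with the elementary gauge identity $(c\pi)^{c^{-1}\sigma}=c\,\pi^\sigma$ (immediate from $(\pi^\sigma)^\sharp=\pi^\sharp(I+\sigma^\flat\pi^\sharp)^{-1}$) and the naturality of gauge transformation under the diffeomorphism $s_t$; with $c=t^{-1}$ and $s_t^*\sigma_1=t\sigma_t$ this gives $s_t^*\pi_1=(t^{-1}\pi_{\g^*})^{t\sigma_t}=t^{-1}\pi_t$, i.e.\ $\pi_t=t\,s_t^*\pi_1$. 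For $a_t=h(-\tfrac{\p}{\p t}\sigma_t)$ with $h$ the radial homotopy operator (which commutes with $s_t^*$), writing $s_t$ as the time-$\log t$ flow of the Euler field $E$ gives $\tfrac{\p}{\p t}\sigma_t=t^{-2}s_t^*(\L_E\sigma_1-\sigma_1)$, and applying $h$ produces $a_t=t^{-2}s_t^*a_1$.

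Finally, for the bisection I would first check directly from $\sigma_\phi=\d\l\mu,\phi^*\theta^L\r$ that $\sigma_{\phi_1\circ s_t}=t^{-1}s_t^*\sigma_1=\sigma_t$, so $\phi_1\circ s_t$ is a competitor to $\phi_t$ with the correct $2$-form and correct endpoints ($\id$ at $t=0$, $\phi_1$ at $t=1$). To identify them I would combine the scalings just proved: they force the Moser vector field to be self-similar, $v_t=-\pi_t^\sharp a_t=t^{-1}s_t^*v_1$, i.e.\ $v_t(\mu)=t^{-2}v_1(t\mu)$. \textbf{The main obstacle is here:} a self-similar time-dependent vector field need not a priori have flow of the conjugated form $s_t^{-1}\circ(\A(\phi_1)^{-1})\circ s_t$, and this conjugation is precisely the content of $\phi_t=s_t^*\phi_1$. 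I expect to resolve it by passing to variables $(z,\tau)=(s_t\mu,\log t)$, in which the Moser equation becomes the \emph{autonomous} flow of $E+v_1$; the initial condition $\A(\phi_t)^{-1}\to\id$ as $t\to0$ then identifies $\A(\phi_1)^{-1}$ with the conjugating limit $\lim_{\tau\to-\infty}\mathrm{fl}^{E}_{-\tau}\circ\mathrm{fl}^{E+v_1}_{\tau}$, which intertwines $\mathrm{fl}^{E}$ with $\mathrm{fl}^{E+v_1}$. That intertwining is equivalent to $\A(\phi_t)^{-1}=s_t^{-1}(\A(\phi_1)^{-1})s_t$, and hence, using the coadjoint formula $\A(\phi_1\circ s_t)=s_t^{-1}\A(\phi_1)s_t$, to $\phi_t=\phi_1\circ s_t=s_t^*\phi_1$.
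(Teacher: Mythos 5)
Your treatment of the first three scaling laws is correct and is essentially the paper's own argument in expanded form: the paper likewise extends $s_t$ to the Lie algebra isomorphism $\dd_t\to\dd$, $\xi+\mu\mapsto\xi+t\mu$ (your $\Lambda_t$), uses $s_t\circ j_t=j\circ s_t$ and the rescaling of the metric by $t$ to get $\sigma_t=t^{-1}s_t^*\sigma_1$, and then obtains the laws for $\pi_t$ and $a_t$ formally, as you do. The genuine gap is in the fourth law. You derive, via the autonomization $(z,\tau)=(s_t\mu,\log t)$ and the wave-operator limit, the conjugation $\A(\phi_t)^{-1}=s_t^{-1}\circ\A(\phi_1)^{-1}\circ s_t$ for the Moser flow on $\g^*$ — this part is fine, and note that the limit converges because $v_1=-\pi_1^\sharp a_1$ vanishes to second order at $0$ ($\pi_1$ and $a_1$ each vanish at the origin). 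But the final inference, that this together with $\sigma_{\phi_1\circ s_t}=\sigma_t$ is \emph{equivalent} to $\phi_t=\phi_1\circ s_t$, is invalid: the assignment $\phi\mapsto(\A(\phi),\sigma_\phi)$ is far from injective, so exhibiting a competitor with the correct action, correct $2$-form, and correct endpoints does not identify it with the bisection the Moser method actually produces. The bisection is defined upstairs, $\phi_t=F_{\G,t}^{-1}\circ\iz$, by the flow on $\G\cong T^*G$ of the field $\tilde v_t$ with $\iota(\tilde v_t)\om_t=-\tz^*a_t$, and this flow is not determined by its projection to $\g^*$: for the zero Poisson structure one has $v_t\equiv 0$ and $\A(\phi)=\on{id}$ for \emph{every} bisection (and in the rank-one case even $\sigma_\phi=0$ for every $\phi$), while $\tilde v_t\neq 0$ whenever $a_t\neq 0$, so the Moser bisections are nontrivial. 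No argument conducted purely on $\g^*$ can close this step.

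The repair is short and is exactly the paper's route: lift the dilation to the groupoid automorphism $s_t\colon(g,\mu)\mapsto(g,t\mu)$ of $T^*G\rra\g^*$. Then $\om_t=\om_0+\tz^*\sigma_t=t^{-1}s_t^*\om_1$ (since $s_t^*\om_0=t\,\om_0$ and $\tz\circ s_t=s_t\circ\tz$) and $\tz^*a_t=t^{-2}s_t^*(\tz^*a_1)$, whence the self-similarity $\tilde v_t=t^{-1}s_t^*\tilde v_1$ holds \emph{upstairs}. Your autonomization argument then transplants verbatim to $\G$; convergence is in fact easier there, because $\tilde v_1$ is tangent to the $\sz$-fibers, so in the $(z,\tau)$ variables the fiber coordinate evolves exactly by the Euler flow while the residual $G$-component equation has exponentially decaying right-hand side ($\tilde v_1$ vanishes along the unit section since $a_1$ vanishes at $0$). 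This yields $F_{\G,t}=s_t^{-1}\circ F_{\G,1}\circ s_t$, and since $s_t\circ\iz=\iz\circ s_t$ one gets $\phi_t=F_{\G,t}^{-1}\circ\iz=s_t^{-1}\circ\phi_1\circ s_t$, i.e.\ $\phi_t=s_t^*\phi_1$ in the left trivialization. (The paper states the implication from $\tilde v_t=t^{-1}s_t^*\tilde v_1$ to $\phi_t=s_t^*\phi_1$ without detail; your conjugation argument, run on $\G$ rather than on $\g^*$, is a legitimate way to justify precisely that step.)
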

\begin{proof}
The rescaling map $s_t\colon \g^*\to \g^*$ extends to a Lie algebra morphism
\[ s_t\colon \dd_t\to \dd,\ \xi+\mu\mapsto \xi+t\mu.\]
The inclusions $j_t\colon \g^*\to \dd_t$ defined by 
$j_t(\mu)=\mu+t\,r^\sharp(\mu)$ satisfy 
$s_t\circ j_t=j\circ s_t$. Hence $s_t$ for $t\neq 0$ is an isomorphism of Manin triples with coboundary structure, 
up to rescaling of the metric. In fact, the isomorphism $s_t$ changes the metric by a factor of $t$:
\[ \l s_t(\xi_1+\mu_1),s_t(\xi_2+\mu_2)]\r=
t \l \xi_1+\mu_1,\xi_2+\mu_2\r.\]
Thus, the metric on $\dd_t$ is $\l\cdot,\cdot\r_t=t^{-1}s_t^*\l\cdot,\cdot\r$.  
From our construction, it is immediate that multliplication of the metric on $\dd$ by some scalar 
amounts to multiplication of the form $\sigma$ by the same scalar. This shows 
$\sigma_t=t^{-1} s_t^*\sig$.
Let $A_t=1+\sigma^\flat_t\circ \pi_0^\sharp$ so that 
$\pi_t^\sharp=\pi_0^\sharp\circ A_t^{-1}$. The Lie--Poisson structure satisfies $\pi_0=t\,(s_t^*)\pi_0$, that is, 
$(s_t)_*\circ \pi_0^\sharp\circ s_t^*=t\,\pi_0^\sharp$. 
Using $\sigma_t^\flat=t^{-1}\, s_t^*\circ \sigma_1^\flat\circ (s_t)_*$ this shows that $A_t\circ s_t^*=s_t^*\circ A_1$, 
and consequently $\pi_{t}=t\ s_t^* \pi_{1}$. The scaling behavior of the 2-forms $\sig_t$ implies that the derivative 
$\dot{\sig}_t=\f{\p\sig}{\p t}$ scales as $\dot{\sig}_t=t^{-2} s_t^*\dot{\sig}_1$. 
(Cf.~\cite[Section 2.1]{al:gw}.) This then implies the scaling property of $a_t$. 

In order to show that the family of bisections $\phi_t$ satisfies the property $\phi_t = s_t^* \phi_1$, 
we note that the map $s_t$ lifts to a groupoid automorphism (denoted again by $s_t$) of the
groupoid $T^*G \rra \g^*$. This automorphism can ve viewed 
as the dilation by the factor of $t$ on the fibers of 
the cotangent bundle $T^*G \to G$. In the left trivialization, we have
$s_t: (g, \mu) \mapsto (g, t\mu)$. The bisection $\phi_t$ is defined such that 
the family of maps $F_{\G, t}: (g, \mu) \to (\phi_t(\mu) g, \mu)$ integrates the Moser flow 
of the 1-form $\tilde{a}_t = \tz^* \a_t$ on $\G$.
By Proposition \ref{prop:groupoid} and equation \eqref{eq:omega+sigma},
the symplectic form on $\G \cong T^*G$ is given by $\om_t = \om_0 + \tz^* \sigma_t$
(here $\om_0$ is the canonical symplectic form on $T^*G$).
Since $\om_0=t^{-1} s_t^* \om_0 $ and  $\sigma_t = t^{-1} s_t^* \sigma_1$
we have $\om_t = t^{-1} s_t^* \om_1$.
The Moser 1-form $\tilde{a}_t=\tz^* a_t$ verifies 
$\tilde{a}_t=t^{-2} s_t^* \tilde{a}_1$ which implies the scaling $\tilde{v}_t = t^{-1} s_t^* \tilde{v}_1$ 
of the Moser vector field $\tilde{v}_t$ on $\G$.
And this implies the desired property $\phi_t = s_t^* \phi_1$.

\end{proof}

\section{Functorial properties of linearization}
Suppose $f\colon (M_1,\pi_{M_1})_{x_1}\to (M_2,\pi_{M_2})_{x_2}$ 
is a germ of a Poisson map, where $\pi_{M_1}$ vanishes at $x_1$ and  
$\pi_{M_2}$ vanishes at $x_2$. Then one 
can look for Poisson linearizations of $M_1$ and $M_2$ in which the map $f$ becomes the linear map $T_{x_1}f$. 
In particular, one can consider this problem for morphisms of Poisson Lie groups. 

\subsection{Morphisms of coboundary Poisson Lie groups}
A \emph{morphism of Lie bialgebras} is a Lie algebra morphism 
$\nu\colon \g_1\to \g_2$ preserving cobrackets. It then follows that the dual map
\[ \tau\colon \g_2^*\to \g_1^*\] 
is both a Poisson map and a morphism of Lie bialgebras.  
It exponentiates to a morphism of the simply connected Poisson Lie groups 
\[ \ca{T}\colon G_2^*\to G_1^*.\] 
The following result shows that in the coboundary case, we may choose Poisson linearizations of $G_1^*,G_2^*$ such the map 
$\ca{T}\colon G_2^*\to G_1^*$ becomes the linear map $\tau\colon \g_2^*\to \g_1^*$.
\begin{theorem}\label{th:b}
Suppose $\g_1$ and $\g_2$ are coboundary Lie algebras, and let $\tau\colon \g_2^*\to \g_1^*$ be a morphism of 
the dual Lie bialgebras which integrates to a Poisson Lie group morphism 
$\ca{T}\colon G_2^*\to G_1^*$.
Let $\Exp_i\colon (\g_i^*)_0\to (G_i^*)_e$ be the germs of maps determined by the coboundary structures.  Given a germ of a bisection 
$\psi_1$ such that $\sig_1=\sig_{\psi_1}$, one can choose a germ of a bisection 
$\psi_2$ such that $\sig_2=\sig_{\psi_2}$ and the diagram 
\begin{equation} \label{eq:diagram}
\begin{CD} (\g_2^*)_0 @>>{\tau}> (\g_1^*)_0\\
@V{\on{Exp}_2\circ \A(\psi_2)^{-1}}VV @VV{\on{Exp}_1\circ \A(\psi_1)^{-1}}V \\
{(G_2^*)_e} @>>{\ca{T}}> {(G_1^*)_e}
\end{CD}
\end{equation}
commutes.
\end{theorem}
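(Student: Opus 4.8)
The plan is to reformulate the commutativity of \eqref{eq:diagram} as a graph statement and then produce $\psi_2$ by the functorial Moser machinery of Proposition \ref{prop:twobisections}. Writing $\Psi_i=\Exp_i\circ\A(\psi_i)^{-1}$ for the two Poisson linearizations, the diagram commutes exactly when the product map $\Psi_1\times\Psi_2$ carries the coisotropic submanifold $\on{Gr}(\tau)=\{(\tau\mu,\mu)\}\subset\g_1^*\times\ol{\g_2^*}$ into $\on{Gr}(\ca T)\subset G_1^*\times\ol{G_2^*}$; both graphs are coisotropic because $\tau$ and $\ca T$ are Poisson, and they integrate to (local) Lagrangian subgroupoids. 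So it would suffice to untwist the product twisted linearization $(\Exp_1\times\Exp_2,\,\sigma)$, carrying $\sigma_1$ on the first and $-\sigma_2$ on the second factor, by a bisection $\Phi$ of $T^*G_1\times\ol{T^*G_2}$ whose restriction to the first factor is the given $\psi_1$ and whose Moser flow is tangent to $\on{Gr}(\tau)$, as in Proposition \ref{prop:coisotropic}, so that graphs are preserved.

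First I would record the algebraic input. The Lie bialgebra morphism $\nu\colon\g_1\to\g_2$ dual to $\tau$ determines a Lagrangian subalgebra $\ell=\{(\xi+\tau\mu,\ \nu\xi+\mu)\mid\xi\in\g_1,\ \mu\in\g_2^*\}\subset\ol{\dd_1}\oplus\dd_2$, i.e.\ a morphism of Manin pairs integrating to a (local) Lagrangian relation between $\ol{D_1}$ and $D_2$. This relation is compatible with the $r$-independent ingredients in the construction of $\Exp_i$ — the exponential linearizations $(\exp,\varpi_i)$ of the Dirac Lie groups $D_i$, the maps $(F_i,\varepsilon_i)$, and the projections $\pr_{G_i^*}$ — and hence induces the pull-back of bisections invoked in Proposition \ref{prop:twobisections}. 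Feeding the canonical scaling interpolation $\sigma_{i,t}=t^{-1}s_t^*\sigma_i$ of Section \ref{subsec:resc} together with the associated homotopy-operator Moser $1$-forms $a_{i,t}$ into this machinery, and using that $\tau$ is \emph{linear} (so it commutes with $s_t$ and with the homotopy operator), I would match the two families and conclude that $\tau$ intertwines the Moser flows, producing a first candidate bisection $\psi_2^{0}$ pulled back from $\psi_1$.

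The essential difficulty — and the reason $\psi_2^{0}$ is not yet the bisection we want — is that the splittings $j_i\colon\g_i^*\to\dd_i$ are \emph{not} compatible with $\ell$: demanding $(j_1(\tau\mu),j_2(\mu))\in\ell$ forces $\nu\circ r_1^\sharp\circ\tau=r_2^\sharp$, i.e.\ $(\nu\otimes\nu)r_1=r_2$, which the hypotheses do not provide. Concretely this means $\ca T\circ\Exp_2\neq\Exp_1\circ\tau$, and, contracting \eqref{eq:contr} with the generators $(\nu\xi)_{\g_2^*}$, $\sigma_2\neq\tau^*\sigma_1$; the discrepancy $\delta=\sigma_2-\tau^*\sigma_1$ is governed by the defect $r_2-(\nu\otimes\nu)r_1\in\g_2\otimes\g_2$, which is $\nu(\g_1)$-invariant since $\nu$ preserves cobrackets. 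Because $\delta$ pulls back to $\tau^*\sigma_1-\sigma_2=-\delta\neq0$ on $\on{Gr}(\tau)$, the product twist does not restrict to zero there, so the naive graph-tangent Moser flow of the previous paragraph does not exist: this is the crux.

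I expect the resolution to be the absorption of $\delta$ into the remaining freedom in choosing $\psi_2$. Concretely I would modify the interpolating family on the second factor by a closed correction built from the invariant defect $r_2-(\nu\otimes\nu)r_1$, so that the total interpolation restricts to zero on $\on{Gr}(\tau)$ while still reaching the correct endpoint twist $\sigma_2$; Proposition \ref{prop:coisotropic} then yields a graph-tangent flow and a bisection $\psi_2$ with $\sigma_{\psi_2}=\sigma_2$ making $\Psi_1\times\Psi_2$ preserve graphs. Verifying that this correction can be realized by an honest bisection is the step I expect to be the main obstacle; it requires, in particular, checking that the two Poisson submanifolds $\im\ca T$ and $\Psi_1(\on{im}\tau)$ of $(G_1^*)_e$ coincide (they agree only to first order before the $\psi_1$-correction is taken into account), and it is precisely the $\nu(\g_1)$-invariance of $r_2-(\nu\otimes\nu)r_1$ that I would use to guarantee the $\ca T$-compatibility needed for this reconciliation.
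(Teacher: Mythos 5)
You have correctly reduced the commutativity of \eqref{eq:diagram} to a graph statement, your pull-back bisection $\psi_2^{0}$ is exactly the paper's $\psi_2''$ (obtained via Proposition \ref{prop:twobisections} from the Lagrangian subgroupoid over $\on{Gr}(\tau)$), and your identification of the crux is accurate: nothing forces $(\nu\otimes\nu)r_1=r_2$, so $\ca{T}\circ\Exp_2\neq\Exp_1\circ\tau$ and the product twist restricts to $-\delta\neq 0$ on $\on{Gr}(\tau)$. But the decisive step is missing, and the fix you sketch cannot work as stated. In Proposition \ref{prop:coisotropic}, if the Moser $1$-forms $a_t$ pull back to zero on $C=\on{Gr}(\tau)$, then so do the $2$-forms $\sig_t$ for \emph{every} $t$ (since $\f{\p}{\p t}\sig_t=-\d a_t$ with $\sig_0=0$, and restriction commutes with $\d$ and with integration in $t$). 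Hence no ``closed correction'' of the interpolating family on the second factor can simultaneously restrict to zero on the graph and reach the prescribed endpoint twist $\pr_1^*\sig_1-\pr_2^*\sig_2$, whose graph-restriction is $-\delta\neq 0$. The defect cannot be absorbed at the level of $2$-forms on the product; it has to be removed by moving the \emph{map}, not the form — and your proposal leaves precisely this step as an unverified expectation.

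The paper removes the defect on $\g_2^*$ alone, by comparing two moment maps for the same action. Setting $(m,\sig_2')=(\Exp_1,\sig_1)^{-1}\circ(\ca{T},0)\circ(\Exp_2,\sig_2)$ with $m=\Exp_1^{-1}\circ\ca{T}\circ\Exp_2$, the chain of Dirac relations
\[ e_{\g_2^*}(\nu(\xi))\ \sim_{(\Exp_2,\sig_2)}\ e_{G_2^*}(\nu(\xi))\ \sim_{(\ca{T},0)}\ e_{G_1^*}(\xi)\ \sim_{(\Exp_1,\sig_1)^{-1}}\ e_{\g_1^*}(\xi)\]
shows that $m$ is a moment map for the coadjoint $\g_1$-action on $\g_2^*$ (through $\nu$) relative to the gauge-transformed structure $\pi^{\sig_2'}$, while $\tau$ is a moment map for the same action relative to $\pi_{\g_2^*}$. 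The equivariant Moser method — conditions \eqref{eq:ateqn} and identity \eqref{eq:withmommap} — then produces a bisection $\psi_2'$ with $(m,\sig_2')\circ(\A(\psi_2'),\sig_{\psi_2'})^{-1}=(\tau,0)$, and the theorem follows with $\psi_2=\psi_2''\psi_2'$: a short composition computation gives both the commutativity of \eqref{eq:diagram} and $\sig_2=\sig_{\psi_2}$. Note that this is exactly the ``absorption'' you hoped for — $\sig_2'$ is the nonlinear analogue of your $\delta=\sig_2-\tau^*\sig_1$ — but implemented as an equivariant flow on $\g_2^*$ intertwining $m$ with $\tau$, where the moment-map property of \eqref{eq:ateqn} supplies exactly the invariance your ad hoc correction lacked. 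It also dissolves your side-worry about $\im\ca{T}$ versus the image of $\Exp_1\circ\A(\psi_1)^{-1}\circ\tau$: the $\g_1$-equivariance of $m$ is built into the displayed chain of relations, so no separate image-matching argument is needed.
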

\begin{proof}
We denote by  $\nu\colon \g_1\to \g_2$ the Lie bialgebra morphism dual to $\tau_0$.
 For all $\xi\in\g_1$, 
\[ e_{\g_2^*}(\nu(\xi))\ \sim_{(\Exp_2,\sig_2)}\ 
e_{G_2^*}(\nu(\xi))\ \sim_{(\ca{T},0)}\  e_{G_1^*}(\xi)
\ \sim_{(\Exp_1,\sig_1)^{-1}}\ e_{\g_1^*}(\xi).\]
Define  $\sig_2'$ by the composition 
\begin{equation}
\label{eq:composition} (\Exp_1,\sig_1)^{-1}\circ (\ca{T},0)\circ (\Exp_2,\sig_2)=(\Exp_1^{-1}\circ \ca{T}\circ \Exp_2,\sig_2').
\end{equation}
That is, $e_{\g_2^*}(\nu(\xi))\sim_{( \Exp_1^{-1}\circ \ca{T}\circ \Exp_2 ,\sig_2')}e_{\g_1^*}(\xi)$, 
which means that $\Exp_1^{-1}\circ \ca{T}\circ \Exp_2$ 
is a moment map relative to the $\sig_2'$-gauge transformed Poisson structure, 
generating the usual coadjoint action of $\g_1$ on $\g_2^*$. Since $\tau$ is a moment map 
for the original Poisson structure, the equivariant Moser method gives a germ of a 
bisection $\psi_2'\in \Gamma(T^*G_2)_0$ relating \eqref{eq:composition} with $(\tau,0)$:
\[
(\Exp_1,\sig_1)^{-1}\circ (\ca{T},0)\circ (\Exp_2,\sig_2)\circ (\A(\psi_2'),\sig_{\psi_2'})^{-1}=(\tau,0).
\]
Let $\psi_2''\in \Gamma(T^*G_2)_0$ be the `pull-back' of the given bisection $\psi_1\in \Gamma(T^*G_1)_0$ under the groupoid morphism lifting $\tau$. 
This bisection satisfies
\[ (\tau,0)\circ (\A(\psi_2''),\sig_{\psi_2''})^{-1}
=(\A(\psi_1),\sig_{\psi_1})^{-1}\circ (\tau,0).\]
Letting $\psi_2=\psi_2''\psi_2'$, we calculate:
\[ \begin{split}\lefteqn{(\Exp_1\circ\, \A(\psi_1)^{-1}\circ \tau,0)}\\
&=(\Exp_1,\sigma_1)\circ (\A(\psi_1),\sig_{\psi_1})^{-1}\circ (\tau,0)\\
&=(\Exp_1,\sigma_1)\circ (\tau,0)\circ (\A(\psi_2''),\sig_{\psi_2''})^{-1}\\
&=(\ca{T},0)\circ (\Exp_2,\sig_2)\circ (\A(\psi_2'),\sigma_{\psi_2'})^{-1}\circ (\A(\psi_2''),\sigma_{\psi_2''})^{-1}\\
&=(\ca{T},0)\circ (\Exp_2,\sig_2)\circ (\A(\psi_2),\sigma_{\psi_2})^{-1}
\end{split}\]
This identity is equivalent to the two equations, 
$\ca{T}\circ \Exp_2\circ\, \A(\psi_2)^{-1}=\Exp_1\circ\,\A(\psi_1)^{-1}\circ \tau$ and 
$\sig_2-\sig_{\psi_2}=0$.
\end{proof}

\begin{remark}
A similar result was obtained in \cite{al:gw} for morphisms of compact Lie groups with the Lu-Weinstein 
Poisson structure. For the group $G=U(n)$, it was used to prove an isomorphism between
the Gelfand-Zeiltin completely integrable system on $\g^*={\rm u}(n)^*$ defined by Guillemin-Sternberg \cite{gu:gc}
and the completely integrable system on $G^*={\rm U}(n)^*$ defined by Flaschka-Ratiu \cite{fl:mo}.
\end{remark}

\section{Addition versus multiplication}
Suppose $G$ is a coboundary Poisson Lie group. 
In this section, we will show that it is possible to choose the Poisson linearizations of $G^*$ and of $G^*\times G^*$ in such a way that the multiplication map becomes the addition in $\g^*$. We will use a technique similar to 
that for the functoriality property (Theorem \ref{th:b}), by comparing two moment maps. 

\subsection{Twisted diagonal action} \label{subsec:twistings}
Let $(\dd,\g,\h)$ be a Manin triple, and $(D,G,H)$ a corresponding triple of Lie groups. 
The group multiplication 
\[ \Mult\colon G^*\times G^*\to G^*\]
of $H=G^*$ is a moment map for the 
\emph{twisted diagonal action} on $G^*\times G^*$. This may be computed as follows:
Let $e_{G^*\times G^*}^{\on{tw}}(\xi)$ be the section of 
$\on{Gr}(\pi_{G^*\times G^*})\subset \T(G^*\times G^*)$
defined by the property 
\[ e_{G^*\times G^*}^{\on{tw}}(\xi)\sim_{(\on{Mult},0)} e_{G^*}(\xi).\]
At any given point $(u_1,u_2)$, this coincides with 
$ e_{G^*\times G^*}^{\on{tw}}(\xi)=(e_{G^*}(\xi_1),e_{G^*}(\xi_2))$ for some $\xi_1,\xi_2\in\g$
(depending on $u_1,u_2$). By considering the pull-back of $\l\theta^R_{G^*},\xi\r$ under $\on{Mult}$, we see that 
$\xi_1=\xi$ and $\xi_2=\Ad^*_{u_1^{-1}}\xi$. 

In the coboundary case, we can change the twisted diagonal action into the usual diagonal action, 
corresponding to the generators $e_{G^*\times G^*}(\xi)=(e_{G^*}(\xi),e_{G^*}(\xi))$:

\begin{proposition}\label{prop:chi}
Suppose $\g$ has a coboundary structure. Then there is a germ of a bisection $\chi$ of the the symplectic groupoid 
$\G\times \G$ of 
$G^*\times G^*$, with the property 
\[ e_{G^*\times G^*}(\xi)\sim_{(\A(\chi),\sigma_\chi)}e^{tw}_{G^*\times G^*}(\xi)\]
for all $\xi\in\g$.
\end{proposition}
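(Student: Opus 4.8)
The plan is to produce $\chi$ by the equivariant Moser method of Section \ref{subsec:moser1}, realizing the passage from the ordinary diagonal action to the twisted one as the time-one map of a Moser flow on the symplectic groupoid $\G\times\G$. First I would record that both $e_{G^*\times G^*}(\xi)$ and $e^{\on{tw}}_{G^*\times G^*}(\xi)$ span $\on{Gr}(\pi_{G^*\times G^*})$, hence each defines a Lie-bialgebra $\g$-action on $M:=G^*\times G^*$: the twisted action has moment map $\Mult$, whereas the ordinary diagonal action is the one matched, under $\Exp\times\Exp$, by the diagonal action on $\g^*\times\g^*$ whose moment map is $\on{Add}_{\g^*}$. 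By Proposition \ref{prop:agcot}, the adjoint action of a bisection of $\G\times\G$ on $A(\G\times\G)\cong\on{Gr}(\pi_M)$ is exactly the action on the cotangent Lie algebroid; so finding $\chi$ with $e_{G^*\times G^*}(\xi)\sim_{(\A(\chi),\sigma_\chi)}e^{\on{tw}}_{G^*\times G^*}(\xi)$ is the same as finding a bisection whose flow conjugates the two families of generators.

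Next I would set up the interpolation on the fixed manifold $M$. Using the coboundary structure and the attendant germ $\Exp$, I would write the first factor as $u_1=\Exp(\mu_1)$ and turn on the twist $\Ad^*_{u_1^{-1}}$ gradually, obtaining a family of generators $e_s(\xi)$, $s\in[0,1]$, all spanning $\on{Gr}(\pi_M)$, with $e_0=e_{G^*\times G^*}$ and $e_1=e^{\on{tw}}_{G^*\times G^*}$. Concretely this mirrors the rescaling of the cobracket from Section \ref{subsec:resc}: the twist $\Ad^*_{u_1^{-1}}$ and the discrepancy between $\Mult$ and $\on{Add}_{\g^*}$ are governed by the cobracket, so scaling the cobracket from $0$ to its actual value produces a smooth path of genuine Hamiltonian actions joining the ordinary diagonal action to the twisted one. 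This path comes with a family of moment maps $\Phi_s$ (with $\Phi_1=\Mult$) and closed invariant $2$-forms $\sigma_s$ with $\sigma_0=0$, built from the contraction identity \eqref{eq:contr} for the $\Exp$-forms.

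I would then invoke the equivariant Moser method. Choosing $a_s$, via the de Rham homotopy operator on $\g^*\times\g^*$, to be a primitive family of $-\tfrac{\p}{\p s}\sigma_s$ that simultaneously satisfies the moment-map compatibility $\tfrac{\p}{\p s}\l\Phi_s,\xi\r=\iota(\xi_M)a_s$ of \eqref{eq:ateqn}, Proposition \ref{prop:groupoid} together with its equivariant refinement \eqref{eq:withmommap} produces germs of bisections $\chi_s$ of $\G\times\G$ with $\chi_0=\id$, $\sigma_{\chi_s}=\sigma_s$, and whose adjoint action carries $e_{G^*\times G^*}(\xi)$ to $e_s(\xi)$. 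Setting $\chi:=\chi_1$ then yields $\sigma_\chi=\sigma_1$ and $e_{G^*\times G^*}(\xi)\sim_{(\A(\chi),\sigma_\chi)}e^{\on{tw}}_{G^*\times G^*}(\xi)$, as required; everything is read as germs near $(e,e)$, so no completeness of the Moser vector field is needed.

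The main obstacle is the middle step: exhibiting the interpolating path as genuine, bracket-preserving, $\on{Gr}(\pi_M)$-valued Hamiltonian actions, and verifying the joint condition \eqref{eq:ateqn}, i.e. that a single invariant $1$-form $a_s$ both primitivizes $-\tfrac{\p}{\p s}\sigma_s$ and reproduces $\tfrac{\p}{\p s}\Phi_s$. This is precisely where the coboundary hypothesis is indispensable: the $\g$-equivariant splitting $j\colon\g^*\to\dd$ furnished by Drinfeld's correspondence provides the forms $\sigma_s$ with the contraction property \eqref{eq:contr} and renders the infinitesimal twist Hamiltonian, so that such a Moser $1$-form exists and is $\g$-invariant. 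Without a coboundary structure there is no reason for the twisted and ordinary diagonal actions to be joined by a flow of bisections.
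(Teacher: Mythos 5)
Your overall strategy (reduce, via Proposition \ref{prop:agcot}, to conjugating the two families of generators by a bisection) correctly identifies one ingredient of the paper's proof, but the decisive step of your argument --- producing a path $e_s(\xi)$ of genuine bracket-preserving families of sections of $\on{Gr}(\pi_{G^*\times G^*})$ joining the diagonal to the twisted diagonal generators, together with data $(\Phi_s,\sigma_s)$ and a single Moser 1-form satisfying \eqref{eq:ateqn} --- is precisely what you do not prove, and it does not follow from the machinery you cite. The equivariant Moser method of Section \ref{subsec:moser} and \eqref{eq:withmommap} presupposes a \emph{fixed} $\g$-action, i.e.\ fixed sections $e_M(\xi)$, with only the moment maps $\Phi_t$ and the 2-forms $\sigma_t$ varying; your scheme varies the generators themselves, which that framework does not cover. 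Moreover, \eqref{eq:ateqn} is formulated for $\g^*$-valued moment maps, whereas the twisted action's moment map is the $G^*$-valued map $\Mult$; to transport everything to $\g^*\times\g^*$ via $(\Exp\times\Exp,\,\pr_1^*\sigma+\pr_2^*\sigma)$ you would need to know that the transported twisted generators are bisection-conjugate to the linear diagonal coadjoint generators --- but that is essentially the proposition itself. Indeed, in the paper the comparison of $\Mult$ with $\on{Add}_{\g^*}$ (Theorem \ref{th:addmult}) is \emph{deduced from} Proposition \ref{prop:chi}, so your appeal to ``the discrepancy between $\Mult$ and $\on{Add}_{\g^*}$ being governed by the cobracket'' runs a real risk of circularity. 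Finally, the cobracket rescaling of Section \ref{subsec:resc} changes the Poisson Lie group $G^*_t$ and its Poisson structure ($\pi_t=t\,s_t^*\pi_1$), so it does not by itself provide an interpolation on the \emph{fixed} manifold $G^*\times G^*$ with its fixed Poisson structure; checking that each intermediate stage is a genuine Hamiltonian action spanning the fixed $\on{Gr}(\pi_{G^*\times G^*})$ is an unaddressed and nontrivial point, which you yourself flag as the ``main obstacle'' and then dispatch with only a heuristic appeal to the coboundary hypothesis.

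The paper's proof avoids all of this by an explicit construction in which the coboundary structure enters concretely. Using the local dressing actions $\bullet$ of $G$ on $G^*$ and $\ast$ of $G^*$ on $G$, related by $gu=(g\bullet u)(u^{-1}\ast g)$, one defines a germ $\lambda\colon G^*_e\to G_e$ by $\lambda(u)=u^{-1}\exp(j(\mu))$ for $u=\Exp\mu$; the $\g$-equivariance of the splitting $j$ yields the identity \eqref{eq:lambdaeq}, $\lambda(g\bullet u)=(u^{-1}\ast g)\lambda(u)g^{-1}$. The bisection is then written down in closed form, $\chi(u_1,u_2)=(e,\lambda(u_1))$, in the (local) action-groupoid picture of $\G\times\G$. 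Two short computations using \eqref{eq:lambdaeq} show first that $\A(\chi)$ intertwines the diagonal with the twisted diagonal local $G$-actions, and second that $\Ad(\chi)$ takes the constant section $(\xi,\xi)$ of the action Lie algebroid to $(\xi,\Ad^*_{u_1^{-1}}\xi)$; Proposition \ref{prop:agcot} then converts this adjoint-action computation into the desired relation $e_{G^*\times G^*}(\xi)\sim_{(\A(\chi),\sigma_\chi)}e^{tw}_{G^*\times G^*}(\xi)$. No Moser argument is used at this stage. To salvage your route you would at minimum have to construct the interpolating Hamiltonian actions explicitly and rework the Moser machinery to allow varying generators --- at which point the paper's one-line formula for $\chi$ is both shorter and stronger.
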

A proof of this fact is deferred to the end of this Section.

\subsection{Addition versus multiplication}
We are now in position to prove:
\begin{theorem}\label{th:addmult}
Let $\g$ be a Lie bialgebra with a coboundary structure, and fix 
a germ at $0$ of a bisection $\psi\in\Gamma(T^*G)_0$ with $\sig_\psi=\sig$. 
Then there exists a germ at $0$ of bisection $\phi\in\Gamma(T^*(G\times G))_0$ with 
$\sig_\phi=\pr_1^*\sig+\pr_2^*\sig$, and such that the following diagram commutes:
\[ \begin{CD} (\g^*\times\g^*)_0 @>>{\on{Add}}> (\g^*)_0\\
@V{(\Exp\times\Exp)\circ \A(\phi)^{-1}}VV @VV{\Exp\circ \A(\psi)^{-1}}V \\
{(G^*\times G^*)_e} @>>{\on{Mult}}> {(G^*)_e}
\end{CD}
\]
\end{theorem}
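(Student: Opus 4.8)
The plan is to mimic the proof of the functoriality Theorem \ref{th:b}, replacing the Poisson Lie group morphism $\ca{T}$ by the multiplication $\Mult_{G^*}$ and the linear map $\tau$ by the addition $\on{Add}_{\g^*}$, and to use Proposition \ref{prop:chi} to reconcile the twisted diagonal action with the honest diagonal action. First I would record the moment map data on both sides. On $\g^*\times\g^*$ the addition map satisfies $e_{\g^*\times\g^*}(\xi,\xi)\sim_{(\on{Add},0)}e_{\g^*}(\xi)$, so $\on{Add}$ is a moment map for the \emph{diagonal} $\g$-action. On $G^*\times G^*$, by Section \ref{subsec:twistings}, $\Mult_{G^*}$ is a moment map for the \emph{twisted} diagonal action, with generators $e^{\on{tw}}_{G^*\times G^*}(\xi)\sim_{(\Mult,0)}e_{G^*}(\xi)$. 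Since the individual linearizations satisfy $e_{\g^*}(\xi)\sim_{(\Exp,\sig)}e_{G^*}(\xi)$, taking products gives $e_{\g^*\times\g^*}(\xi,\xi)\sim_{(\Exp\times\Exp,\,\pr_1^*\sig+\pr_2^*\sig)}(e_{G^*}(\xi),e_{G^*}(\xi))$.

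Next I would assemble the chain of related sections. The point is to connect $e_{\g^*\times\g^*}(\xi,\xi)$ on the source to $e_{\g^*}(\xi)$ on the target through the column maps and the bottom map. Using Proposition \ref{prop:chi} to pass from the diagonal generators $e_{G^*\times G^*}(\xi)$ to the twisted ones via $(\A(\chi),\sigma_\chi)$, I obtain
\[ e_{\g^*\times\g^*}(\xi,\xi)\ \sim_{(\Exp\times\Exp,\,\pr_1^*\sig+\pr_2^*\sig)}\ e_{G^*\times G^*}(\xi)\ \sim_{(\A(\chi),\sigma_\chi)}\ e^{\on{tw}}_{G^*\times G^*}(\xi)\ \sim_{(\Mult,0)}\ e_{G^*}(\xi).\]
Composing these with $(\Exp,\sig)^{-1}\circ(\A(\psi),\sig_\psi)$ on the target side (recall $\sig_\psi=\sig$, so $\Exp\circ\A(\psi)^{-1}$ is a genuine Poisson map), I get a twisted morphism from $\g^*\times\g^*$ to $\g^*$ whose underlying map is $\A(\psi)^{-1}\circ\on{Add}\circ\,(\text{something})$ and which intertwines the diagonal $\g$-action with the coadjoint action on $\g^*$. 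Since $\on{Add}$ is itself a moment map for the diagonal action relative to the untwisted Lie--Poisson structure, the equivariant Moser method (the version packaged in Proposition \ref{prop:groupoid} and equation \eqref{eq:withmommap}) produces a germ of a bisection $\phi'\in\Gamma(T^*(G\times G))_0$ whose gauge 2-form exactly cancels the accumulated twist, converting the composite into the honest $(\on{Add},0)$.

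Finally, exactly as in Theorem \ref{th:b}, I would correct for the bisection $\psi$ on the right-hand vertical arrow by pulling it back along the groupoid comorphism covering $\on{Add}$ (using Proposition \ref{prop:twobisections} applied to the Poisson map $\on{Add}$), obtaining a second bisection $\phi''$ with $(\on{Add},0)\circ(\A(\phi''),\sig_{\phi''})^{-1}=(\A(\psi),\sig_\psi)^{-1}\circ(\on{Add},0)$. Setting $\phi=\phi''\phi'$ and running the same telescoping computation of composed morphisms as in the proof of Theorem \ref{th:b}, the identity of twisted morphisms collapses to the two scalar equations $\Mult_{G^*}\circ(\Exp\times\Exp)\circ\A(\phi)^{-1}=\Exp\circ\A(\psi)^{-1}\circ\on{Add}$ and $\sig_\phi=\pr_1^*\sig+\pr_2^*\sig$, which is precisely the commuting diagram. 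The main obstacle I expect is \emph{proving Proposition \ref{prop:chi} itself}: one must produce the bisection $\chi$ interpolating the twisted and untwisted diagonal actions, which requires showing that the difference of the two generating sections is a coboundary realizable by a gauge transformation on the product groupoid, and this is where the coboundary structure on $\g$ (equivalently the $\g$-equivariant splitting $j$) must be used essentially. Granting that proposition, the rest is a formal repetition of the functoriality argument with $(\Mult,\on{Add})$ in place of $(\ca{T},\tau)$.
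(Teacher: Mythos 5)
Your overall strategy is the paper's: the same chain of related sections through $(\Exp\times\Exp,\pr_1^*\sig+\pr_2^*\sig)$, $(\A(\chi),\sig_\chi)$, $(\Mult,0)$ and $(\Exp,\sig)^{-1}$, the equivariant Moser step producing $\phi'$, and the pull-back $\phi''$ of $\psi$ along $\Add$. But your final assembly $\phi=\phi''\phi'$ has a genuine gap: the twist $(\A(\chi),\sig_\chi)$ sits \emph{between} $(\Mult,0)$ and $(\Exp\times\Exp,\pr_1^*\sig+\pr_2^*\sig)$ in the composite, and your telescoping never moves it out of the way. Unwinding your identities gives
\[ (\Exp\circ\A(\psi)^{-1}\circ\Add,\,0)=(\Mult,0)\circ(\A(\chi),\sig_\chi)\circ(\Exp\times\Exp,\pr_1^*\sig+\pr_2^*\sig)\circ(\A(\phi''\phi'),\sig_{\phi''\phi'})^{-1},\]
whose underlying map is $\Mult\circ\A(\chi)\circ(\Exp\times\Exp)\circ\A(\phi)^{-1}$, not $\Mult\circ(\Exp\times\Exp)\circ\A(\phi)^{-1}$, and whose 2-form condition reads $\sig_\phi=\pr_1^*\sig+\pr_2^*\sig+(\Exp\times\Exp)^*\sig_\chi$, not $\sig_\phi=\pr_1^*\sig+\pr_2^*\sig$. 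This is exactly why the situation is \emph{not} "a formal repetition" of Theorem \ref{th:b}, where no such middle factor occurs. The paper's fix is to introduce a third bisection: let $\wt{\chi}$ be the `pull-back' of $\chi$ under $\Exp\times\Exp$ (defined by conjugating $\chi$ by the germ of groupoid isomorphism covering the invertible twisted Poisson map $\Exp\times\Exp$ — note Proposition \ref{prop:twobisections} does not apply verbatim here, since $\Exp\times\Exp$ is only a \emph{twisted} Poisson map), so that $(\A(\chi),\sig_\chi)\circ(\Exp\times\Exp,\pr_1^*\sig+\pr_2^*\sig)=(\Exp\times\Exp,\pr_1^*\sig+\pr_2^*\sig)\circ(\A(\wt{\chi}),\sig_{\wt{\chi}})$, and then take $\phi=\phi''\circ\phi'\circ\wt{\chi}^{-1}$. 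With that third factor the telescoping does collapse to the two required equations; without it, the statement you derive is simply a different (and false, as a reading of the theorem) identity.

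A secondary slip: you propose composing $(\A(\psi),\sig_\psi)$ into the chain \emph{before} applying the equivariant Moser method. The Moser step requires the composite to satisfy $e_{\g^*\times\g^*}(\xi)\sim e_{\g^*}(\xi)$, i.e., to be a moment map for the diagonal action relative to a gauge-transformed structure; but $(\A(\psi),\sig_\psi)$ acts nontrivially on the generating sections $e_{\g^*}(\xi)$ (this is the content of Propositions \ref{prop:bisar} and \ref{prop:agcot}), so the $\psi$-dressed composite no longer has this property and the equivariant Moser argument would not apply to it. Moreover, keeping $\psi$ inside the composite and then also correcting by $\phi''$ double-counts the same adjustment. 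The correct order, as in the proof of Theorem \ref{th:b}, is to apply Moser to the bare composite $(m,\sig')=(\Exp,\sig)^{-1}\circ(\Mult,0)\circ(\A(\chi),\sig_\chi)\circ(\Exp\times\Exp,\pr_1^*\sig+\pr_2^*\sig)$ and only afterwards insert the $\phi''$ correction. Your identification of Proposition \ref{prop:chi} as the essential input carrying the coboundary structure is accurate.
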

Note that all maps in this diagram are Poisson. 
\begin{proof}
For all $\xi\in\g$, we have
\[ \begin{split}
e_{\g^*\times\g^*}(\xi)&\sim_{(\Exp\times\Exp,\,\pr_1^*\sig+\pr_2^*\sig)}e_{G^*\times G^*}(\xi)
\\&\sim_{(\A(\chi),\sigma_\chi)}
e_{G^*\times G^*}^{tw}(\xi)
\\&\sim_{(\Mult,0)} e_{G^*}(\xi)
\\&\sim_{(\Exp,\sig)^{-1}}e_{\g^*}(\xi).
\end{split}\]
Let $(m,\sigma')$ be the composition of these morphisms:
\[ (m,\sigma')=(\Exp,\sig)^{-1}\circ (\Mult,0)\circ (\A(\chi),\sigma_\chi)\circ (\Exp\times\Exp,\pr_1^*\sig+\pr_2^*\sig).\]
The relation 
$e_{\g^*\times\g^*}(\xi)\sim_{(m,\sigma')}e_{\g^*}(\xi)$
shows that $m$ is a moment map relative to the gauge transformed Poisson structure $\pi_{\g^*\times\g^*}^{\sigma'}$, for the standard (diagonal) coadjoint action of $\g$. On the other hand, we have
\[  e_{\g^*\times\g^*}(\xi)\sim_{(\Add,0)}e_{\g^*}(\xi)\] 
The equivariant Moser method for the bisection $\sig'$ gives a germ of a bisection $\phi'$  with 
\[ (m,\sigma')\circ (\A(\phi'),\sig_{\phi'})^{-1}=(\Add,0).\]
Let $\phi''$ be the `pull-back' of the given bisection $\psi\in \Gamma(T^*G)_0$ under the map $\Add$, thus
\[ (\A(\psi),\sig_\psi)^{-1}\circ (\Add,0)=(\Add,0)\circ (\A(\phi''),\sig_{\phi''})^{-1}.\]
Finally, let $\wt{\chi}$ be the `pull-back' of the bisection $\chi$ under $\Exp\times\Exp$, so that 
\[ (\A(\chi),\sig_\chi)\circ  (\Exp\times\Exp,\pr_1^*\sig+\pr_2^*\sig)=(\Exp\times\Exp,\pr_1^*\sig+\pr_2^*\sig)\circ 
(\A(\wt{\chi}),\sig_{\wt{\chi}}).\]
Put $\phi=\phi''\circ \phi'\circ\wt{\chi}^{-1}$. We obtain
\[ \begin{split}
\lefteqn{(\Exp\circ\, \A(\psi)^{-1}\circ \Add,0)}\\&=(\Exp,\sigma)\circ (\A(\psi),\sigma_\psi)^{-1}\circ  (\Add,0)\\
&=(\Exp,\sigma)\circ  (\Add,0)\circ (\A(\phi''),\sigma_{\phi''})^{-1}\\
&=(\Exp,\sigma)\circ (m,\sigma')\circ (\A(\phi'),\sigma_{\phi'})^{-1}\circ (\A(\phi''),\sigma_{\phi''})^{-1}\\
&=(\Mult,0)\circ (\Exp\times\Exp,\pr_1^*\sigma+\pr_2^*\sigma)\circ (\A(\wt{\chi}),\sigma_{\wt{\chi}})
\circ (\A(\phi'),\sigma_{\phi'})^{-1}\circ (\A(\phi''),\sigma_{\phi''})^{-1}\\
&=(\Mult,0)\circ (\Exp\times\Exp,\pr_1^*\sigma+\pr_2^*\sigma)\circ (\A(\phi),\sigma_\phi)^{-1}\\
&=\Big(\Mult\circ (\Exp\times\Exp)\circ \A(\phi)^{-1},\ (\A(\phi)^{-1})^*(\pr_1^*\sigma+\pr_2^*\sigma-\sigma_\phi)\Big).
\end{split}\]
Hence
$\Mult\circ (\Exp\times\Exp)\circ\,\A(\phi)^{-1}=\Exp\circ \,\A(\psi)^{-1}\circ \Add$
and $\pr_1^*\sigma+\pr_2^*\sigma-\sigma_\phi=0$, as required.
\end{proof}

\begin{remark}
If $G$ is a compact Poisson Lie group with the Lu-Weinstein Poisson structure, the assertions of the Theorem \ref{th:addmult} are valid globally, not only on the level of germs. As mentioned earlier, this is due to the fact that $\Exp$ is a global diffeomorphism in this case, and since the symplectic leaves are compact. A related result was obtained in \cite{al:li}. In more detail, let $\mathcal{O}_1, \mathcal{O}_2 \subset \g^*$ be two coadjoint orbits and let
$$
\mathcal{O}_1+\mathcal{O}_2 = \{ x_1+x_2; \, x_1\in \mathcal{O}_1, x_2\in \mathcal{O}_2\} \subset \g^*.
$$
Since the map $\Exp: \g^* \to G^*$ is equivariant, $\mathcal{D}_1=\Exp(\mathcal{O}_1)$ and $\mathcal{D}_2=\Exp(\mathcal{O}_2)$ are dressing orbits and we define
$$
\mathcal{D}_1\mathcal{D}_2 = \{ u_1u_2; \, u_1\in \mathcal{D}_1, u_2\in \mathcal{D}_2\} \subset G^*.
$$
Theorem \ref{th:addmult} implies that
$$
\Exp(\mathcal{O}_1+\mathcal{O}_2) = \mathcal{D}_1\mathcal{D}_2.
$$
In particular, since $\mathcal{O}_1+\mathcal{O}_2=\mathcal{O}_2+\mathcal{O}_1$,
we obtain an equality $\mathcal{D}_1\mathcal{D}_2=\mathcal{D}_2\mathcal{D}_1$.
\end{remark}

\begin{remark}
The proof of Theorem \ref{th:addmult} is similar to an argument used in the proof from \cite{al:ka} of the Kashiwara-Vergne conjecture for quadratic Lie algebras. In fact, one may use the Theorem to obtain a Kashiwara-Vergne type result for $G^*$, comparing the 
convolution of germs of invariant distributions on $\g^*$ to the convolution of germs of invariant distributions on $G^*$.
\end{remark}

\subsection{Proof of Proposition \ref{prop:chi}}
The left dressing action $\xi\mapsto \xi_{G^*}$ of $\g$ on $G^*$ integrates to a \emph{local} action $\bullet$ of the group $G$.
Dually, we have a local action $\ast$ of $G^*$ on $G$. The two actions are related by the formula 
\[ gu =(g\bullet u)(u^{-1}\ast g)\]
for $u\in G^*$ and $g\in G$ sufficiently close to the group unit.
Using the inclusion $j\colon \g^*\to \dd$, define a germ of a map
\[ \lambda\colon G^*_e\to G_e,\]
by the property $\lambda(u)=u^{-1}\exp(j(\mu))$ for $u=\Exp\mu$. Since the map $j$ is equivariant with respect to the (co)adjoint action, we have 
\begin{equation}\label{eq:lambdaeq}
\lambda(g\bullet u)=(g\bullet u)^{-1}\Ad_g (u\lambda(u))=(u^{-1}\ast g)\lambda(u)g^{-1},\end{equation}
for  $g\in G$ and $u\in G^*$ close to the group unit. 
Identify the (local) symplectic groupoid $\G\rra G^*$ with the (local) action groupoid for the dressing action, and 
let $\chi$ be the germ of bisection of $\G\times\G\rra G^*\times G^*$ given as 
\[ \chi(u_1,u_2)=(e,\lambda(u_1)).\]
Its action on $G^*\times G^*$ is given by $\A(\chi)\colon (u_1,u_2)\mapsto (u_1,\lambda(u_1)\bullet u_2)$. 
\begin{lemma}
The map $\A(\chi)$  intertwines the diagonal action with the twisted diagonal action. 
\end{lemma}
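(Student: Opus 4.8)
The plan is to verify the asserted intertwining directly at the level of the (local) $G$-actions, from which the identity of infinitesimal generators in Proposition~\ref{prop:chi} follows by differentiating at the unit. First I would record both actions in group-theoretic form. The diagonal action of $g\in G$ is $g\cdot(u_1,u_2)=(g\bullet u_1,\,g\bullet u_2)$. For the twisted diagonal action I would use that $\Mult\colon G^*\times G^*\to G^*$ is equivariant for it, together with the defining relation $gu=(g\bullet u)(u^{-1}\ast g)$ between the local $G$-action $\bullet$ on $G^*$ and the local $G^*$-action $\ast$ on $G$. Writing $g u_1u_2=(g\bullet u_1)(u_1^{-1}\ast g)u_2$ and expanding $(u_1^{-1}\ast g)u_2$ by the same relation, the $G^*$-part of the $G^*\cdot G$ decomposition gives $g\bullet(u_1u_2)=(g\bullet u_1)\big((u_1^{-1}\ast g)\bullet u_2\big)$; hence the twisted diagonal action must be
\[ g\cdot_{\on{tw}}(u_1,u_2)=\big(g\bullet u_1,\ (u_1^{-1}\ast g)\bullet u_2\big),\]
which at $g=\exp(t\xi)$ recovers the generators $(\xi_{G^*},(\Ad^*_{u_1^{-1}}\xi)_{G^*})$ computed above.

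With both actions in hand, I would substitute $\A(\chi)(u_1,u_2)=(u_1,\lambda(u_1)\bullet u_2)$ and compare the two composites. On one side,
\[ \A(\chi)\big(g\cdot(u_1,u_2)\big)=\big(g\bullet u_1,\ \lambda(g\bullet u_1)\bullet(g\bullet u_2)\big),\]
and on the other,
\[ g\cdot_{\on{tw}}\A(\chi)(u_1,u_2)=\big(g\bullet u_1,\ (u_1^{-1}\ast g)\bullet(\lambda(u_1)\bullet u_2)\big).\]
The first components agree, so it remains to match the second. Here I would invoke the equivariance relation \eqref{eq:lambdaeq}, namely $\lambda(g\bullet u_1)=(u_1^{-1}\ast g)\,\lambda(u_1)\,g^{-1}$, and then use that $\bullet$ is a (local) group action together with $g^{-1}\bullet(g\bullet u_2)=u_2$:
\[ \lambda(g\bullet u_1)\bullet(g\bullet u_2)=(u_1^{-1}\ast g)\bullet\Big(\lambda(u_1)\bullet\big(g^{-1}\bullet(g\bullet u_2)\big)\Big)=(u_1^{-1}\ast g)\bullet\big(\lambda(u_1)\bullet u_2\big),\]
which is precisely the second component on the other side. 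This proves $\A(\chi)\circ(g\cdot\,)=(g\cdot_{\on{tw}}\,)\circ\A(\chi)$, and differentiating at $t=0$ along $g=\exp(t\xi)$ yields $T\A(\chi)$ carrying the diagonal generators to the twisted diagonal generators, i.e.\ the stated intertwining (the vector-field part of Proposition~\ref{prop:chi}).

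I expect the only genuine subtlety to be the bookkeeping with the two mutually dual local actions $\bullet$ and $\ast$, and the fact that everything is defined only as a germ near the units: I must ensure that all products and actions appearing (in particular the expansion of $(u_1^{-1}\ast g)u_2$ and the cancellation $g^{-1}\bullet(g\bullet u_2)$) remain within the neighbourhoods on which $\bullet$, $\ast$, and $\lambda$ are defined, which is achieved after passing to sufficiently small germs. The computation itself is a direct consequence of \eqref{eq:lambdaeq} and associativity of $\bullet$; the main conceptual step is deriving the group-level twisted diagonal action from the moment-map property of $\Mult$.
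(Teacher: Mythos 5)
Your proof is correct and follows essentially the same route as the paper: both establish the identity $\A(\chi)\circ(g\cdot\,)=(g\cdot_{\on{tw}}\,)\circ\A(\chi)$ at the level of the local $G$-actions by substituting $\A(\chi)(u_1,u_2)=(u_1,\lambda(u_1)\bullet u_2)$ and applying the equivariance relation \eqref{eq:lambdaeq} together with associativity of $\bullet$ (your cancellation $g^{-1}\bullet(g\bullet u_2)=u_2$ is just a regrouping of the paper's step $\lambda(g\bullet u_1)\bullet(g\bullet u_2)=(\lambda(g\bullet u_1)g)\bullet u_2=((u_1^{-1}\ast g)\lambda(u_1))\bullet u_2$). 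The only addition is that you derive the group-level form $g\cdot_{\on{tw}}(u_1,u_2)=(g\bullet u_1,\ (u_1^{-1}\ast g)\bullet u_2)$ from the decomposition $gu=(g\bullet u)(u^{-1}\ast g)$, a formula the paper simply asserts as the integration of the twisted diagonal $\g$-action.
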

\begin{proof}
The twisted diagonal action of $\g$ on $G^*\times G^*$ integrates to the local group action
$g\bullet(u_1,u_2):=(g\bullet u_1,\ (u_1^{-1}\ast g)\bullet u_2)$, for $g\in G$ and $u_1,u_2\in G^*$ close to the group unit.
Using \eqref{eq:lambdaeq} we find
\[ \begin{split}
\A(\chi)(g\bullet u_1,g\bullet u_2)&=(g\bullet u_1,\ (\lambda(g\bullet u_1)g)\bullet u_2)\\
&=\big(g\bullet u_1,\ ((u_1^{-1}\ast g)\lambda(u_1))\bullet u_2\big)\\
&=g\bullet (u_1,\lambda(u_1)\bullet u_2)\\
&=g\bullet \A(\chi)(u_1,u_2).\qedhere
\end{split}\]
\end{proof}
The Lemma shows that the generating vector fields for the diagonal and twisted diagonal action are $\A(\chi)$-related: 
$\xi_{G^*\times G^*}\sim_{\A(\chi)}\xi^{tw}_{G^*\times G^*}$. Proposition \ref{prop:chi} strengthens this result 
to a relation $e_{G^*\times G^*}(\xi)\sim_{(\A(\chi),\sigma_\chi)} e_{G^*\times G^*}^{tw}(\xi)$ between sections of the cotangent 
Lie algebroid. By Proposition \ref{prop:agcot}, the action of the group of bisections on the cotangent Lie algebroid 
$\on{Gr}(\pi_{G^*\times G^*})\cong T^*(G^*\times G^*)$
coincides with the adjoint action on the Lie algebroid $\A(\G\times \G)$, which in this case is an action Lie algebroid 
for the dressing action. Hence, Proposition \ref{prop:chi} follows from the following result:
\begin{lemma}
The action of $\Ad(\chi)$ on the action Lie algebroid $(G^*\times G^*)\times(\g\times\g)$ takes the constant section 
$(\xi,\xi)$ to the section $(\xi,\Ad^*_{u_1^{-1}}\xi)$.
\end{lemma}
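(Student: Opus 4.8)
The plan is to compute the groupoid conjugation $\Ad(\chi)$ explicitly and then differentiate it at the units. Recall that we identify $\G\rra G^*$ with the local action groupoid $G\ltimes G^*$ for the dressing action $\bullet$, so that an arrow is a pair $(g,u)$ with $\sz(g,u)=u$ and $\tz(g,u)=g\bullet u$, and the Lie algebroid $A\G=G^*\times\g$ is trivial with constant sections $\xi\in\g$. Under this identification $\chi$ is the section $(u_1,u_2)\mapsto (e,\lambda(u_1))$ of the source map of $\G\times\G$. First I would substitute into the definition $\Ad(\chi)(\gamma)=\chi(\tz\gamma)\,\gamma\,\chi(\sz\gamma)^{-1}$. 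Writing a general arrow as $\gamma=((g_1,u_1),(g_2,u_2))$ and using the action-groupoid multiplication (being careful about composability and about the inverse $(g,u)^{-1}=(g^{-1},g\bullet u)$), the first factor is left unchanged because the first component of $\chi$ is the identity bisection, and a short calculation gives
\[ \Ad(\chi)(\gamma)=\Big((g_1,u_1),\ \big(\lambda(g_1\bullet u_1)\,g_2\,\lambda(u_1)^{-1},\ \lambda(u_1)\bullet u_2\big)\Big). \]

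The induced map on $A(\G\times\G)$ is the differential of this groupoid automorphism at the units. I would evaluate it on the constant section $(\xi,\xi)$ by differentiating the group components of $\Ad(\chi)(\gamma_t)$ at $t=0$, where $\gamma_t=((\exp(t\xi),u_1),(\exp(t\xi),u_2))$. The first component is simply $(\exp(t\xi),u_1)$, whose derivative is the constant section $\xi$ over the first factor, matching the claimed answer. For the second factor the group part is $c(t)=\lambda(\exp(t\xi)\bullet u_1)\,\exp(t\xi)\,\lambda(u_1)^{-1}$, and the key simplification is to insert the equivariance relation \eqref{eq:lambdaeq} with $g=\exp(t\xi)$ and $u=u_1$, namely $\lambda(\exp(t\xi)\bullet u_1)=(u_1^{-1}\ast\exp(t\xi))\,\lambda(u_1)\,\exp(-t\xi)$. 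Substituting collapses $c(t)$ to $c(t)=u_1^{-1}\ast\exp(t\xi)$, so that the Lie algebroid element over the second factor is $\dot c(0)=\frac{d}{dt}\big|_{t=0}(u_1^{-1}\ast\exp(t\xi))\in\g$.

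It remains to identify $\dot c(0)$ with $\Ad^*_{u_1^{-1}}\xi$. I would differentiate the defining relation $gu=(g\bullet u)(u^{-1}\ast g)$ of the two dual local actions in $D$ at $g=\exp(t\xi)$, $t=0$, and left-translate by $u^{-1}$ into $\dd=\h\oplus\g$; since $g\bullet u\in H$ and $u^{-1}\ast g\in G$, the two summands on the right land in $\h$ and $\g$ respectively, so reading off the $\g$-component gives $\dot c(0)=\pr_\g(\Ad_{u_1^{-1}}\xi)$. Finally I would match this with the notation $\Ad^*_{u_1^{-1}}\xi$ (the transpose of $\Ad_{u_1}$ acting on $\h=\g^*$): using invariance of the metric together with the facts that $\h$ is isotropic and $\Ad_{u_1}$ preserves $\h$, for every $\eta\in\h$ one has
\[ \l \pr_\g(\Ad_{u_1^{-1}}\xi),\eta\r=\l \Ad_{u_1^{-1}}\xi,\eta\r=\l\xi,\Ad_{u_1}\eta\r=\l \Ad^*_{u_1^{-1}}\xi,\eta\r, \]
and non-degeneracy of the pairing $\g\times\h$ gives the equality. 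Since $\Ad^*_{u_1^{-1}}\xi$ depends only on $u_1$, which is preserved by the base map $\A(\chi)$, this is a well-defined section, completing the proof. The main obstacle I anticipate is bookkeeping: getting the composability and inverse conventions of the action groupoid correct in the first display, and keeping track of left-versus-right translations when extracting the $\g$-component in the last two steps. The structural input that makes everything cancel is precisely the equivariance identity \eqref{eq:lambdaeq}.
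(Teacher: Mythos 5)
Your proposal is correct and follows essentially the same route as the paper: compute $\Ad(\chi)$ explicitly on the action groupoid, collapse the second group component to $u_1^{-1}\ast g$ via the equivariance identity \eqref{eq:lambdaeq}, and pass to the infinitesimal version. The only difference is one of detail, not of method: the paper ends with ``the infinitesimal version of this action is as stated in the Lemma,'' whereas you spell out that final step --- differentiating $gu=(g\bullet u)(u^{-1}\ast g)$ to get $\dot c(0)=\pr_\g(\Ad_{u_1^{-1}}\xi)$ and then checking via the pairing that this equals $\Ad^*_{u_1^{-1}}\xi$ --- all of which is accurate under the paper's conventions.
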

\begin{proof}
We have $\chi^{-1}(u_1,u_2)=(e,\lambda(u_1)^{-1})$. Hence, the adjoint action on the action groupoid takes 
$(u_1,u_2;g_1,g_2)$ to 
\[\begin{split}
\Ad(\chi)(u_1,u_2;g_1,g_2)&=
\big(g_1\bullet u_1,g_2\bullet u_2;e,\lambda(g_1\bullet u_1)\big)\big(u_1,u_2;g_1,g_2\big)
\big(u_1,\lambda(u_1)\bullet u_2;e,\lambda(u_1)^{-1}\big)\\
&=
\big(u_1,\lambda(u_1)\bullet u_2;g_1,\lambda(g_1\bullet u_1)g_2\lambda(u_1)^{-1}\big)
\end{split}\]
For  $g_1=g_2=g$, this simplifies using the equivariance property of $\lambda$:
\[ \Ad(\chi)(u_1,u_2;g,g)= (u_1,\lambda(u_1)\bullet u_2;g,u_1^{-1}\ast g).\]
The infinitesimal version of this action is as stated in the Lemma. 
\end{proof}


\begin{thebibliography}{10}

\bibitem{al:po}
A.~Alekseev, \emph{On {P}oisson actions of compact {L}ie groups on symplectic
  manifolds}, J.~Differential Geom. \textbf{45} (1997), no.~2, 241--256.

\bibitem{al:pur}
A.~Alekseev, H.~Bursztyn, and E.~Meinrenken, \emph{Pure spinors on {L}ie
  groups}, Ast\'{e}risque \textbf{327} (2009), 131--199.

\bibitem{al:ka}
A.~Alekseev and E.~Meinrenken, \emph{Poisson geometry and the
  {K}ashiwara--{V}ergne conjecture}, C.~R.~Math.~Acad.~Sci.~Paris \textbf{335}
  (2002), no.~9, 723--728.

\bibitem{al:gw}
A.~Alekseev and E.~Meinrenken, \emph{Ginzburg-{W}einstein via
  {G}elfand-{Z}eitlin}, J. Differential Geom. \textbf{76} (2007), no.~1, 1--34.

\bibitem{al:li}
A.~Alekseev, E.~Meinrenken, and C.~Woodward, \emph{Linearization of {P}oisson
  actions and singular values of matrix products}, Ann.~Inst.~Fourier
  (Grenoble) \textbf{51} (2001), no.~6, 1691--1717.

\bibitem{bu:ga}
H.~Bursztyn, \emph{On gauge transformations of {P}oisson structures}, Quantum
  field theory and noncommutative geometry, Lecture Notes in Phys., vol. 662,
  Springer, Berlin, 2005, pp.~89--112.

\bibitem{cah:non}
M.~Cahen, S.~Gutt, and J.~Rawnsley, \emph{Nonlinearizability of the {I}wasawa
  {P}oisson {L}ie structure}, Lett. Math. Phys. \textbf{24} (1992), no.~1,
  79--83.

\bibitem{cat:int}
A.~Cattaneo, \emph{On the integration of {P}oisson manifolds, {L}ie algebroids,
  and coisotropic submanifolds}, Lett. Math. Phys. \textbf{67} (2004), no.~1,
  33--48.

\bibitem{cat:int1}
A.~Cattaneo, B.~Dherin, and A.~Weinstein, \emph{Integration of lie algebroid
  comorphisms}, arXiv:1210.4443.

\bibitem{chl:lin}
V.~Chloup-Arnould, \emph{Linearization of some {P}oisson-{L}ie tensor}, J.
  Geom. Phys. \textbf{24} (1997), no.~1, 46--52.

\bibitem{con:noran}
J.F. Conn, \emph{Normal forms for analytic {P}oisson structures}, Ann. of Math.
  (2) \textbf{119} (1984), no.~3, 577--601.

\bibitem{con:norsm}
\bysame, \emph{Normal forms for smooth {P}oisson structures}, Ann. of Math. (2)
  \textbf{121} (1985), no.~3, 565--593.

\bibitem{cos:gro}
A.~Coste, P.~Dazord, and A.~Weinstein, \emph{Groupo\"\i des symplectiques},
  Publications du {D}\'epartement de {M}ath\'ematiques. {N}ouvelle {S}\'erie.
  {A}, {V}ol.\ 2, Publ. D\'ep. Math. Nouvelle S\'er. A, vol.~87, Univ.
  Claude-Bernard, Lyon, 1987, pp.~i--ii, 1--62. \MR{996653 (90g:58033)}

\bibitem{cou:di}
T.~Courant, \emph{Dirac manifolds}, Trans.~Amer.~Math.~Soc. \textbf{319}
  (1990), no.~2, 631--661.

\bibitem{cra:geo}
M.~Crainic and R.L. Fernandes, \emph{A geometric approach to {C}onn's
  linearization theorem}, Ann. of Math. (2) \textbf{173} (2011), no.~2,
  1121--1139.

\bibitem{cra:exi}
M.~Crainic and I.~Marcut, \emph{On the existence of symplectic realizations},
  J. Symplectic Geom. \textbf{9} (2011), no.~4, 435--444.

\bibitem{sme:exi}
V.~de~Smedt, \emph{Existence of a {L}ie bialgebra structure on every {L}ie
  algebra}, Lett. Math. Phys. \textbf{31} (1994), no.~3, 225--231.

\bibitem{dr:ha}
V.~G. Drinfel{\cprime}d, \emph{Hamiltonian structures on {L}ie groups, {L}ie
  bialgebras and the geometric meaning of classical {Y}ang-{B}axter equations},
  Dokl.~Akad.~Nauk SSSR \textbf{268} (1983), no.~2, 285--287.

\bibitem{dr:qu}
V.~G. Drinfeld, \emph{Quantum groups}, Proceedings of the International
  Congress of Mathematicians, Vol.~1, 2 (Berkeley, Calif., 1986) (Providence,
  RI), Amer.~Math.~Soc., 1987, pp.~798--820.

\bibitem{dr:quas}
V.~G. Drinfel{\cprime}d, \emph{Quasi-{H}opf algebras}, Algebra i Analiz
  \textbf{1} (1989), no.~6, 114--148. \MR{1047964 (91b:17016)}

\bibitem{duf:lin}
J.-P. Dufour, \emph{Lin\'earisation de certaines structures de {P}oisson}, J.
  Differential Geom. \textbf{32} (1990), no.~2, 415--428.

\bibitem{enr:co}
B.~Enriquez, P.~Etingof, and I.~Marshall, \emph{Comparison of {P}oisson
  structures and {P}oisson-{L}ie dynamical {$r$}-matrices}, Int. Math. Res.
  Not. (2005), no.~36, 2183--2198.

\bibitem{fer:lin}
R.~Fernandes and P.~Monnier, \emph{Linearization of {P}oisson brackets}, Lett.
  Math. Phys. \textbf{69} (2004), 89--114.
  
\bibitem{fl:mo}
H.~Flaschka and T.~Ratiu, \emph{A {M}orse-theoretic proof of {P}oisson {L}ie
  convexity}, Integrable systems and foliations/Feuilletages et syst\`emes
  int\'egrables (Montpellier, 1995), Progr.~Math., vol. 145, Birkh\"auser
  Boston, Boston, MA, 1997, pp.~49--71.  

\bibitem{gi:lp}
V.~L. Ginzburg and A.~Weinstein, \emph{Lie-{P}oisson structure on some
  {P}oisson {L}ie groups}, J.~Amer.~Math.~Soc. \textbf{5} (1992), no.~2,
  445--453.

\bibitem{gu:gc}
V.~Guillemin and S.~Sternberg, \emph{The {G}elfand-{C}etlin system and
  quantization of the complex flag manifolds}, J.~Funct.~Anal \textbf{52}
  (1983), 106--128. 

\bibitem{hig:dua}
P.~Higgins and K.~Mackenzie, \emph{Duality for base-changing morphisms of
  vector bundles, modules, {L}ie algebroids and {P}oisson structures}, Math.
  Proc. Cambridge Philos. Soc. \textbf{114} (1993), no.~3, 471--488.

\bibitem{kar:ana}
M.~V. Karas{\"e}v, \emph{Analogues of objects of the theory of {L}ie groups for
  nonlinear {P}oisson brackets}, Izv. Akad. Nauk SSSR Ser. Mat. \textbf{50}
  (1986), no.~3, 508--538, 638.

\bibitem{lu:mo}
J.-H. Lu, \emph{Momentum mappings and reduction of {P}oisson actions},
  Symplectic geometry, groupoids, and integrable systems (Berkeley, CA, 1989),
  Springer, New York, 1991, pp.~209--226.

\bibitem{lu:gr}
J.-H. Lu and A.~Weinstein, \emph{Groupo\"\i des symplectiques doubles des
  groupes de {L}ie-{P}oisson}, C. R. Acad. Sci. Paris S\'er. I Math.
  \textbf{309} (1989), no.~18, 951--954.

\bibitem{maj:fou}
S.~Majid, \emph{Foundations of quantum group theory}, Cambridge University
  Press, Cambridge, 1995.

\bibitem{me:clifbook}
E.~Meinrenken, \emph{{L}ie groups and {C}lifford algebras}, {Ergebnisse der
  Mathematik und ihrer Grenzgebiete}, vol.~58, Springer, Heidelberg, 2013.

\bibitem{pra:tro}
Jean Pradines, \emph{Troisi\`eme th\'eor\`eme de {L}ie les groupo\"\i des
  diff\'erentiables}, C. R. Acad. Sci. Paris S\'er. A-B \textbf{267} (1968),
  A21--A23.

\bibitem{wad:nor}
A.~Wade, \emph{Normalisation formelle de structures de {P}oisson}, C. R. Acad.
  Sci. Paris S\'er. I Math. \textbf{324} (1997), no.~5, 531--536.

\bibitem{wei:loc}
A.~Weinstein, \emph{The local structure of {P}oisson manifolds}, J.
  Differential Geom. \textbf{18} (1983), no.~3, 523--557.

\bibitem{wei:pri}
A.~Weinstein, \emph{Poisson geometry of the principal series and
  nonlinearizable structures}, J. Differential Geom. \textbf{25} (1987), no.~1,
  55--73.

\bibitem{wei:sym}
A.~Weinstein, \emph{Symplectic groupoids and {P}oisson manifolds}, Bull. Amer.
  Math. Soc. (N.S.) \textbf{16} (1987), no.~1, 101--104.

\bibitem{xu:poigr}
Ping Xu, \emph{On {P}oisson groupoids}, Internat. J. Math. \textbf{6} (1995),
  no.~1, 101--124.

\end{thebibliography}

\def\cprime{$'$} \def\polhk#1{\setbox0=\hbox{#1}{\ooalign{\hidewidth
  \lower1.5ex\hbox{`}\hidewidth\crcr\unhbox0}}} \def\cprime{$'$}
  \def\cprime{$'$} \def\cprime{$'$}
  \def\polhk#1{\setbox0=\hbox{#1}{\ooalign{\hidewidth
  \lower1.5ex\hbox{`}\hidewidth\crcr\unhbox0}}} \def\cprime{$'$}
  \def\cprime{$'$} \def\cprime{$'$} \def\cprime{$'$} \def\cprime{$'$}
\providecommand{\bysame}{\leavevmode\hbox to3em{\hrulefill}\thinspace}
\providecommand{\MR}{\relax\ifhmode\unskip\space\fi MR }
\providecommand{\MRhref}[2]{%
  \href{http://www.ams.org/mathscinet-getitem?mr=#1}{#2}
}
\providecommand{\href}[2]{#2}

\end{document}